\documentclass{article}

\usepackage[margin=1in]{geometry}
\usepackage{microtype}
\usepackage{graphicx}
\usepackage{booktabs}
\usepackage{amsmath,amssymb,amsfonts,mathtools,amsthm}
\usepackage{tabularx,array,makecell,multirow}
\usepackage{algorithm}
\usepackage{algorithmic}
\usepackage{url}
\usepackage{xcolor}
\usepackage[colorlinks=true,
            citecolor=teal,
            linkcolor=blue!60!black,
            urlcolor=blue!60!black]{hyperref}
\usepackage[capitalize,noabbrev]{cleveref}
\usepackage{natbib}
\usepackage[]{authblk}
\renewcommand{\Affilfont}{\normalsize}

\allowdisplaybreaks
\setcitestyle{authoryear,open={(},close={)}}

\theoremstyle{plain}
\newtheorem{proposition}{Proposition}[section]

\newtheorem{assumption}[proposition]{Assumption}

\theoremstyle{remark}

\crefname{assumption}{Assumption}{Assumptions}
\Crefname{assumption}{Assumption}{Assumptions}
\crefname{corollary}{Corollary}{Corollaries}
\Crefname{corollary}{Corollary}{Corollaries}
\theoremstyle{definition}

\crefname{problem}{Problem}{Problems}
\Crefname{problem}{Problem}{Problems}

\newcommand{\Mset}{\mathcal{M}}
\newcommand{\Iset}{\mathcal{I}}
\newcommand{\Bset}{\mathcal{B}}

\newcommand{\Kset}{\mathcal{K}}
\newcommand{\Xset}{\mathcal{X}}
\newcommand{\Pset}{\mathcal{P}}
\newcommand{\Tset}{\mathcal{T}}
\newcommand{\Wset}{\mathcal{W}}
\newcommand{\Lset}{\mathcal{L}}
\DeclareMathOperator*{\argmin}{arg\,min}
\newcommand{\edge}{e}
\newcommand{\Eset}{\mathcal{E}}

\newcommand{\gap}{s}
\newcommand{\dual}{\lambda}
\newcommand{\rc}{\mathrm{RC}}

\newcommand{\Qval}{Q}
\newcommand{\kk}{k}
\newcommand{\inj}{z}
\newcommand{\own}{\kappa}
\newcommand{\Fval}{F}
\newcommand{\scaled}{w}
\newcommand{\brief}{\mathsf{brief}}

\newcommand{\cmark}{\ensuremath{\checkmark}}

\begin{document}

\title{Open Capacity Pooling in Agentic Supply Chains:\\
{\Large Coordination-Directed LLM Discovery and Distributed
Re-optimization}}

\date{}
\setlength{\affilsep}{2em}
\author[*1,2]{Yujia Xu}
\author[3,1]{Walid Klibi}
\author[1,2]{Benoit Montreuil}
\renewcommand{\baselinestretch}{1.2}
\affil[1]{Physical Internet Center, Supply Chain \& Logistics Institute, \protect\\ Georgia Institute of Technology, Atlanta GA, USA}
\affil[2]{\Affilfont School of Industrial \& Systems Engineering, Georgia Institute of Technology, Atlanta GA, USA}
\affil[3]{\Affilfont Centre of Excellence for Supply Chain Innovation \& Transportation (CESIT) \protect\\
Kedge Business School, Bordeaux, France}
\affil[*]{\textit{Corresponding author: yujiaxu.a@gmail.com}}

\renewcommand{\baselinestretch}{1.2}
\maketitle
\renewcommand{\baselinestretch}{1}

\begin{abstract}
Disruptions can exhaust a supply chain network's capacity, yet outside
capacity is hard to use: incumbent models are private, provider profiles are
unstructured, and offers stay hidden until costly engagement. We formulate
open capacity pooling, making network membership a disruption-response
decision. The alternating direction method of multipliers (ADMM) coordinates
incumbents without sharing models, and residual capacity gaps direct search over profiles indexed by a large
language model (LLM).
Verification reveals offers, reduced-cost screening admits them, and
warm-started ADMM re-optimizes. The procedure is optimal if every eligible
offer is revealed at exact prices, heuristic otherwise. Across 200 synthetic
episodes, full-information opening recovers 24.6\% of capacity-scarcity cost.
With 40 contacts, coordination-directed need selection and
LLM ranking capture 32.0\% of this value, versus 4.7\% for undirected,
unranked contact. LLM reading of all profiles performs within five points of a complete
structured registry and error-free extraction and finds four times as many
compatible providers as keyword search among 10{,}000 records. What
profiles omit is the exact capacity each provider's single offer covers,
and requests for other capacity fail; registering offers in the same index
raises captured value without a contact limit from 46.8\% to 98.6\%,
keeping net value positive at every tested contact charge.
\end{abstract}

\noindent\textbf{Keywords:} agentic supply chains; disruption management;
distributed optimization; column generation; large language models;
Physical Internet

\section{Introduction}\label{sec:intro}

Supply-chain disruptions create localized and capability-specific capacity
shortfalls \citep{unctad2024maritime}. A motivating
setting is a regional e-retail fulfillment network in which a demand surge,
facility outage, certification loss, or transport interruption exhausts a
specific resource even while usable capacity may exist elsewhere. Research
on capacity sharing and on-demand warehousing shows how temporary access to
external capacity can mitigate such imbalances
\citep{guo2018capacity,ceschia2023ondemand}.

The Physical Internet anticipates interoperable capacity across
independently operated facilities
\citep{montreuil2011physical,sohrabi2011open}, and on-demand warehousing
and freight platforms make temporary access increasingly practical
\citep{flexe2026platform}. Capacity-pooling models quantify the operating
value of shared capacity but typically formulate allocation from a
centralized planning perspective
\citep{jordan1995chaining,kim2022inventory,xu2026dynamic}.
Distributed-optimization methods instead coordinate firms without centrally
assembling their costs, constraints, or demand
\citep{nedic2009distributed,boyd2011admm,maggiar2026consensus}. Both streams
commonly treat network membership as fixed during operational optimization,
which can leave an incumbent pool using emergency recourse even when suitable
storage, processing, or transport capacity is available from a non-incumbent
provider. We define open capacity pooling as expanding the set of capacity sources during
disruption response through verified external bids for predefined capacity
commodities.

Operationalizing open capacity pooling raises three connected challenges.
First, incumbent models are private: the value of external capacity depends
on the network as a whole, while each firm retains its local costs and
constraints \citep{boyd2011admm,aybat2019distributed}. Second, provider
profiles are unstructured: they describe broad capabilities in
heterogeneous prose. Large language models (LLMs) can map such records into
an operational schema
\citep{li2023large,quan2025leveraging,joshi2026vendor,song2026llmscm}, but
their outputs can be incomplete or unsupported
\citep{ji2023hallucination,xu2026optiloop}. Third, offers stay hidden until
costly engagement: the network must decide whom to contact before
compatibility, quantity, and price are known, and each unsuccessful contact
consumes scarce sourcing effort while emergency capacity remains in use.
Existing approaches do not integrate private-model coordination, budgeted
search over unstructured provider records, and verified bid admission. We
therefore ask how much of the full-information operating value a
budget-feasible, verification-gated policy can recover when offers are
hidden, coordination terminates at finite tolerance, and each engagement is
costly.

We use agentic in an architectural sense: a multi-actor decision loop in
which each actor holds a distinct role. Incumbent firms act through
optimization-based planning agents that keep their models private and
coordinate by exchange-ADMM, the exchange form of the alternating direction
method of multipliers (ADMM). A discovery loop acts on the coordination
outcome: it turns unmet needs into search briefs, engages providers, observes
verification and quotation results, and re-coordinates after admission. The
LLM is the loop's semantic interface. It reads each provider description once
into an evidence-backed, searchable record, so that coordination-generated
briefs can query a directory written in free prose; the same records can
carry offers that providers register in machine-readable form.

Figure~\ref{fig:loop} summarizes the resulting loop: incumbents coordinate
first, residual scale-normalized shortages direct provider search,
verification and a request for quotation (RFQ) reveal contractible bids,
reduced-cost screening governs admission, and the expanded pool is
re-coordinated.

\begin{figure}[!htbp]
\centering
\includegraphics[width=0.95\textwidth]
{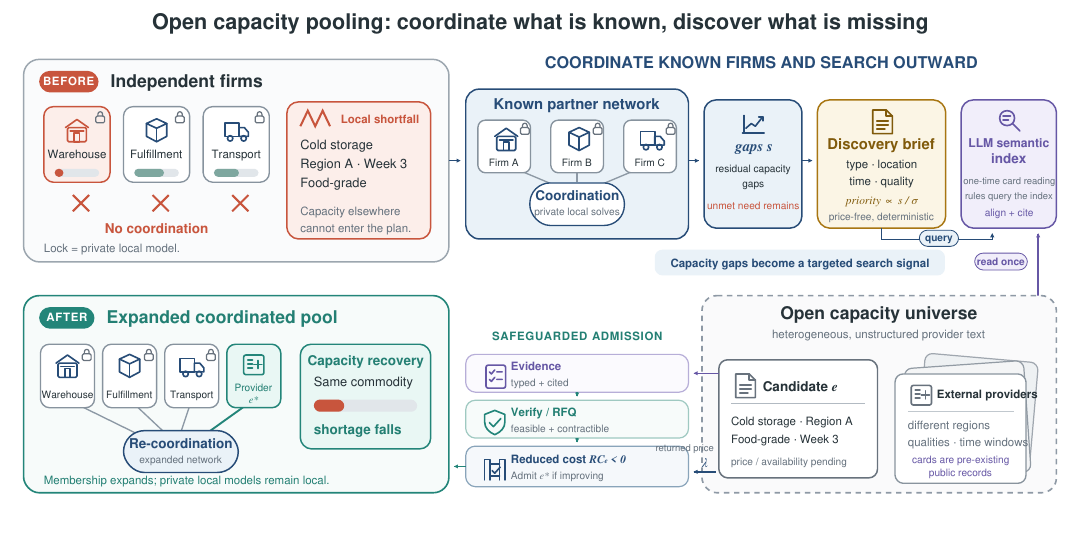}
\caption{Conceptual flow of open capacity pooling, from fixed-network
coordination to targeted provider discovery, verified admission, and
re-optimization of the expanded pool.}
\label{fig:loop}
\end{figure}

This study makes four contributions.
\begin{itemize}
\item We formulate open capacity pooling, in which network membership and
scarce provider engagement become disruption-response decisions, and
design a coordination-directed mechanism for it. Exchange-ADMM coordinates
incumbents without sharing their models; its scale-normalized capacity
gaps direct engagement over a one-time LLM index of provider profiles
before any offer is known, and its shadow prices screen verified bids for
admission once an offer is revealed.
\item We establish a clear guarantee boundary. Optional bids cannot worsen
the exact operating objective, and exhaustive exact operation reaches the
optimum over a frozen finite universe of eligible bids, provided every
eligible bid is revealed. At any other exact stop, the remaining
optimality gap comes only from unrevealed eligible bids. Budgeted and
finite-tolerance operation remain heuristic.
\item We quantify the operating value of open membership under disruption
and how much of it the mechanism recovers. Across 200 synthetic episodes,
full-information opening recovers 24.6\% of modeled capacity-scarcity
cost, mainly by replacing expensive emergency recourse. With hidden offers
and 40 contacts, gap direction and LLM ranking capture 32.0\% of that
value, versus 4.7\% for undirected, unranked contact. LLM reading of
free-text profiles performs within five points of a complete structured
registry and, among 10{,}000 records, finds four times as many compatible
providers as BM25 keyword search. Finite-tolerance exchange-ADMM stays
within 2 percentage points of exact coordination.
\item We show that the mechanism extends directly to agentic counterparties
that publish machine-readable offers. LLM reading already recovers nearly
all the information in provider profiles: error-free extraction adds at
most about one percentage point of captured value. What the profiles omit
is the exact capacity that each provider's one standing offer covers, so
requests for other capacity yield no quote. Letting providers register
their offers in the same index removes these failed contacts, raising
captured value without a contact limit from 46.8\% to 98.6\% and keeping
acquisition-stage net value positive at all tested contact charges.
\end{itemize}

\Cref{sec:related} reviews the literature. \Cref{sec:setting} formulates
fixed-network capacity pooling, \cref{sec:method} develops verified open
admission and its guarantees, \cref{sec:experiments} presents the
computational study, and \cref{sec:conclusion} discusses implications and
limitations. Sections numbered S1--S6 are in the Supplementary Material.

\section{Related Literature and Research Gap}\label{sec:related}

The paper draws on six research streams. Disruption sourcing, capacity
pooling, and the Physical Internet (PI) define the application setting,
while distributed optimization, column generation, and LLM research supply
the coordination, expansion, and language components.
\Cref{tab:positioning} compares representative studies against the
capabilities the mechanism requires; each stream provides one component,
but none of the works reviewed combines them.

\begin{table}[!htbp]
\centering
\caption{Positioning relative to adjacent research streams (representative
studies, not an exhaustive review). \cmark{} = provided; (\cmark) =
partially provided; \textemdash{} = not addressed. \emph{Open membership}:
non-incumbent providers can join at run time. \emph{Local models}: firm
models are not centrally assembled (not cryptographic privacy).
\emph{Gap-directed search}: coordination gaps select the need queried.
\emph{Unstructured candidates}: providers are described in free text.
\emph{Verified admission}: admission follows verification, RFQ, and
reduced-cost screening.}
\label{tab:positioning}
\small
\setlength{\tabcolsep}{4pt}
\renewcommand{\arraystretch}{1.25}
\begin{tabularx}{\textwidth}{@{}
>{\raggedright\arraybackslash}X
>{\centering\arraybackslash}p{1.55cm}
>{\centering\arraybackslash}p{1.35cm}
>{\centering\arraybackslash}p{1.55cm}
>{\centering\arraybackslash}p{1.85cm}
>{\centering\arraybackslash}p{1.6cm}@{}}
\toprule
\textbf{Research stream and representative studies} &
\textbf{Open member-ship} &
\textbf{Local models} &
\textbf{Gap-directed search} &
\textbf{Unstruct.\ candidates} &
\textbf{Verified admission} \\
\midrule
Supplier selection and contingent sourcing\newline
{\footnotesize\citep{tomlin2006value,ravindran2010risk,sawik2013integrated,namdar2018supply}}
& (\cmark) & \textemdash{} & \textemdash{} & \textemdash{} & (\cmark) \\
\addlinespace[3pt]
Fixed-pool capacity pooling\newline
{\footnotesize\citep{jordan1995chaining,faugere2022dynamic,kim2022inventory,xu2026dynamic}}
& \textemdash{} & (\cmark) & \textemdash{} & \textemdash{} & \textemdash{} \\
\addlinespace[3pt]
Open supply webs and PI deployment\newline
{\footnotesize\citep{sohrabi2011open,yang2017mitigating,xu2024network,liu2025dynamic}}
& (\cmark) & (\cmark) & \textemdash{} & \textemdash{} & \textemdash{} \\
\addlinespace[3pt]
Distributed coordination\newline
{\footnotesize\citep{nedic2009distributed,boyd2011admm,aybat2019distributed,maggiar2026consensus}}
& \textemdash{} & \cmark & (\cmark) & \textemdash{} & \textemdash{} \\
\addlinespace[3pt]
Column generation and reduced-cost pricing\newline
{\footnotesize\citep{dantzig1960decomposition,barnhart1998branchprice,lubbecke2005column,keim2025mlcg}}
& \textemdash{} & \textemdash{} & \cmark & \textemdash{} & (\cmark) \\
\addlinespace[3pt]
LLMs for supply chain and optimization\newline
{\footnotesize\citep{li2023large,jannelli2024agentic,quan2025leveraging,song2026llmscm,joshi2026vendor}}
& (\cmark) & \textemdash{} & \textemdash{} & \cmark & (\cmark) \\
\midrule
\textbf{This paper}
& \cmark & \cmark & \cmark & \cmark & \cmark \\
\bottomrule
\end{tabularx}
\end{table}

\subsection{From disruption sourcing and fixed pools to open supply webs}
\label{subsec:rw-pooling}

Disruption sourcing is an established production-research problem
\citep{snyder2016ormsmodels,ivanov2017literature}. Supplier selection and
order allocation models choose among candidate suppliers whose capacities,
costs, and disruption probabilities are modeled in advance
\citep{sawik2013integrated}, and multicriteria
approaches first prequalify a large supplier base into a shortlist before
allocating orders \citep{ravindran2010risk}. Contingency studies value
backup suppliers, backup production, dual sourcing, spot purchasing, and
flexible capacity as responses to supply failure
\citep{tomlin2006value,wang2010mitigating,namdar2018supply,kamalahmadi2022impact}. Our setting shares their objective but differs in
what is known at decision time. Provider records are enumerable, yet each
provider's eligibility for a specific need and its binding offer are latent
until engagement. Prequalification therefore happens at run time from
unstructured records under a contact budget, and an offer is valued by the
shadow prices of a multi-firm pool coordinated without shared models rather
than by a single buyer's cost.

Capacity-pooling research explains why sharing resources is valuable, but
it usually takes the membership of the pool as given. The process-flexibility
literature shows that a limited set of carefully chosen links can capture
much of the value of full flexibility \citep{jordan1995chaining}.
Cooperative-game studies then ask how known partners should share pooled
inventory or capacity and allocate the resulting costs, for example in capacity sharing
among independent firms with congestion \citep{yu2015capacity}. More recent production
research studies shared-manufacturing platforms, which match orders to
posted capacity with declared attributes such as time windows
\citep{zhang2024capacity}, and emergency-response systems
that combine backup-capacity investment with capacity sharing
\citep{yang2025integrated}. In these studies the partners, or the capacity
they post, are known when allocation begins. Engaging a provider whose
offer is still latent is not part of the decision.

The PI and Open Supply Web supply the broader vision. They replace closed
private networks with interoperable webs of independently operated
warehouses, hubs, and transport services \citep{montreuil2011physical,sohrabi2011open,pan2017physical}. In particular, \citet{sohrabi2011open} argue
that network configuration should migrate from a long-horizon strategic
decision toward an operational and tactical one, and
\citet{yang2017mitigating} show that interconnected PI logistics services
can mitigate supply chain disruptions. Operational PI studies bring
capacity decisions closer to execution. \citet{faugere2022dynamic}
dynamically deploy a known pool of relocatable storage modules across an
urban parcel network, \citet{liu2024containerized} allocate and relocate
mobile production containers across facilities, \citet{liu2025dynamic}
plan hub throughput capacity under uncertain demand and travel times, and
\citet{xu2026dynamic} extend
dynamic pooling to heterogeneous human--robot resources. Other studies
optimize an inventory-availability commitment between a known supplier and
dropshipper \citep{kim2022inventory} or deploy battery swapping and
charging stations within a hyperconnected hub network \citep{xu2024network}.
In all
of these studies, the available resources and the participating network
are known. None specifies how an operational shortfall should identify a
suitable provider from text, evaluate its value to the network, and admit
it without centralizing incumbent models.

\subsection{Distributed coordination and economically meaningful duals}
\label{subsec:rw-distributed}

Distributed optimization decomposes a system objective into private local
subproblems connected by message passing
\citep{nedic2009distributed,boyd2011admm}. Two works are especially close
to our coordination layer. The exchange form of ADMM in
\citet{boyd2011admm} exposes the multiplier of a resource-clearing
constraint as a price, which is the basis of our capacity exchange and its
shadow-price interpretation. \citet{aybat2019distributed} coordinate
multi-agent resource sharing over time-varying networks and establish
convergence to a consensual dual price. More recently,
\citet{maggiar2026consensus} coordinate heterogeneous planning agents
around shared decisions. Supply-chain research pursues the same goal
through collaborative planning, in which partners coordinate their plans
without disclosing their full cost structures
\citep{stadtler2009framework}, and through auction
markets that coordinate decentralized lot-sizing by prices
\citep{lee2007decentralized}. These methods let local objectives and
constraints remain locally owned, although the exchanged messages are not
by themselves a privacy guarantee.

Our contribution is not a new ADMM variant. All of these methods coordinate
a fixed set of participants. We reuse two coordination outputs for
distinct tasks: the scale-normalized recourse gap identifies where the
incumbent pool still depends on fallback before any contact, while the
shadow price values a bid only after verification and RFQ reveal it.

\subsection{Column generation and reduced-cost pricing}
\label{subsec:rw-cg}

Once a new provider is represented as a supply column, its admission is a
classical reduced-cost decision. Dantzig--Wolfe decomposition, column
generation, and branch-and-price repeatedly solve a restricted master, use
its duals to price omitted columns, and add improving columns
\citep{dantzig1960decomposition,barnhart1998branchprice,lubbecke2005column}. We therefore claim no
novelty for the pricing arithmetic.

A growing literature learns to accelerate a difficult pricing problem.
For example, \citet{keim2025mlcg} approximate pricing for large-scale capacity planning and then restore a
quality certificate through a standard column-generation phase. In these
works the column universe is closed and structured: every candidate path,
pattern, or assignment has known feasibility, and learning speeds up its
enumeration.

Our candidate-generation step differs in kind. Provider records are
enumerable, but eligible bid columns are latent, because compatibility and
terms are unknown before engagement. Retrieval over the LLM index can
therefore miss relevant providers, and only verification and RFQ turn a
provider into a contractible bid column that reduced cost can price.

\subsection{LLMs for supply chain and optimization}
\label{subsec:rw-llm}

Most LLM work at the interface with operations research asks the model to
formulate or solve an optimization problem, automating
natural-language-to-model-to-code pipelines
\citep{huang2025orlm}.
Supply-chain studies use LLMs to interpret context and support planning or
multi-agent coordination \citep{li2023large,jannelli2024agentic,song2026llmscm}. \citet{jackson2024generative} propose a capability-based
framework for generative AI in supply chain and operations management.

Three works are particularly relevant to our chosen LLM role.
\citet{quan2025leveraging} use LLMs to assess risk in hyperconnected hub
network deployment, establishing a direct LLM-in-PI precedent, but the LLM
evaluates deployment risks rather than responding to an operational shadow
price. \citet{joshi2026vendor} use multiple LLM agents for vendor evaluation
and risk-aware procurement; vendors are semantically scored but not priced
as columns inside a re-optimizing capacity exchange. Finally,
\citet{xu2026optiloop} use an ADMM-style consensus protocol to verify and
repair LLM-generated optimization agents from their in-loop behavior.
OptiLoop is the methodological precursor of our structured evidence
design, but it verifies agents within a fixed coordination environment
rather than discovering and admitting providers.

\section{Fixed-Network Capacity Pooling and Distributed Coordination}
\label{sec:setting}

This section answers a self-contained question: given a fixed set of
participating firms, what capacity-pooling problem is being solved, how
does each firm participate without disclosing its private model, and how
are the commodity-level shadow prices computed? \Cref{sec:method}
then embeds this inner coordination loop in an outer loop that expands the
pool. Three indices are kept separate throughout the paper:
$\kk$ indexes capacity commodities, $r$ indexes outer network-expansion
rounds, and $t$ indexes inner ADMM iterations within one round.

\subsection{Capacity commodities and firm participation}
\label{subsec:commodities}

We model a Physical-Internet-style open storage/fulfillment web as
independently operated logistics firms controlling warehouses,
fulfillment hubs, and cross-dock or transit facilities
\citep{montreuil2011physical,sohrabi2011open}. For the inner problem,
$\Mset^r$ is a fixed set of firm agents. The outer expansion mechanism
keeps the firms at $\Mset^0$ and adds verified external capacity as bid
agents $R^r$, giving the exchange-agent set
$\Iset^r=\{0\}\cup\Mset^0\cup R^r$. We reserve $m$ for a firm and
$\edge$ for a bid column; the outer-expansion mechanism below defines
admission to $R^r$.

Shareable capacity is standardized into a finite set of capacity
commodities $\Kset$. The ontology is fixed and system defined: openness
concerns network membership, not the discovery of new capacity types. Each commodity $\kk\in\Kset$ carries a public
attribute profile
\begin{equation}
\label{eq:commodity}
\alpha_\kk=\big(
\underbrace{\text{type}_\kk}_{\text{resource family}},\;
\underbrace{\ell_\kk}_{\text{service location}},\;
\underbrace{\omega_\kk}_{\text{time window}},\;
\underbrace{\chi_\kk}_{\text{quality class}}
\big).
\end{equation}
We call $\alpha_\kk$ the key of commodity $\kk$: two units of capacity are
interchangeable only if their keys coincide.

The resource family covers storage, handling, processing, dock/transit
operations, and short-haul transport; the quality class $\chi_\kk$
records hard qualifications. The service location $\ell_\kk$ is not a provider's
physical address but where capacity must be operationally deliverable,
either a service region for facility capacity or an
origin--destination lane for transportation.
Transportation, repositioning, and distance costs required for delivery
remain in the firm's private model or an external bid's quoted price.

A firm's public coordination interface is a signed net injection
$\inj_m=(\inj_{m,\kk})_{\kk\in\Kset}$: positive entries supply
commodity $\kk$, negative entries receive it, and zero means no net
transaction. Its verified participation set
$\Kset_m=\{\kk\in\Kset:\inj_{m,\kk}\text{ may be nonzero}\}$ enforces
$\inj_{m,\kk}=0$ outside that set. Because $\alpha_\kk$ is already a
public system definition, an incumbent exposes only $\Kset_m$, assumed
verified at onboarding, and its injection responses.

\subsection{Private firm models}
\label{subsec:firms}

Each firm $m\in\Mset^r$ holds structural parameters $\Theta_m$, a
current demand state $d_m$, internal operating decisions $y_m$, a cost
function $C_m$, and a feasible set $\Xset_m(\Theta_m,d_m)$. These
objects may differ across firm types and remain behind the local
optimization interface. The induced cost-of-injection function is
\begin{equation}
\label{eq:valuefn}
\Fval_m(\inj_m;d_m)
=\min_{y_m}\Big\{C_m(y_m;\Theta_m):(y_m,\inj_m)
\in\Xset_m(\Theta_m,d_m)\Big\},
\end{equation}
with value $+\infty$ when no feasible operation supports
$\inj_m$. We assume the minimum is attained on the effective domain.

Section~S1 gives warehouse, processing-hub, and cross-dock
examples. In each, a common resource constraint maps commodity injections
onto the firm's physical resource pools, so several service-location or
time-window commodities can share one pool. A
disruption can increase internal consumption and make borrowing or
emergency recourse cost-minimizing when owned and flexible capacity are
insufficient.

We assume each induced function $\Fval_m(\cdot;d_m)$ is closed, proper,
and convex with nonempty effective domain. Section~S1.5 verifies
these properties for the example models.

\subsection{Centralized fixed-network benchmark}
\label{subsec:benchmark}

Fix a participating firm set $\Mset^r$. For each commodity $\kk$,
$\gap_\kk\ge0$ is emergency capacity procured from a public fallback
source and injected into the pool at cost $\Phi(\gap)$. It covers the
residual aggregate deficit after firm-to-firm pooling; it is neither
unserved demand nor disposal. We assume
$\Phi:\mathbb R_+^{|\Kset|}\to\mathbb R$ is finite, closed, convex,
and componentwise nondecreasing. The system-level benchmark jointly
chooses firm injections and emergency recourse:
\begin{equation}
\label{eq:pooling}
\Qval(\Mset^r)
=\min_{\{\inj_m\},\,\gap\ge0}
\left\{\sum_{m\in\Mset^r}\Fval_m(\inj_m;d_m)+\Phi(\gap):
\sum_{m\in\Mset^r}\inj_m+\gap=0\right\}.
\end{equation}

Because positive injections supply capacity and negative injections
receive it, the clearing equality sets
$\gap=-\sum_m\inj_m$. Hence nonnegative recourse can offset an
aggregate deficit, $\sum_m\inj_m\le0$, but cannot absorb aggregate
surplus. This commoditywise equality is the only cross-firm coupling.

A linear fallback cost is $\Phi(\gap)=P^\top\gap$ with $P_\kk>0$. A
two-tier alternative supplies up to $\bar\gap_\kk$ units at
$P^1_\kk$ per unit and additional emergency capacity at the higher
rate $P^2_\kk$:
\begin{equation}
\label{eq:twotier}
\Phi(\gap)=\sum_{\kk\in\Kset}\left[
P^1_\kk\min\{\gap_\kk,\bar\gap_\kk\}
+P^2_\kk(\gap_\kk-\bar\gap_\kk)_+\right],
\qquad
0<P^1_\kk<P^2_\kk,\quad \bar\gap_\kk>0.
\end{equation}

The slope is $P^1_\kk$ below the kink and $P^2_\kk$ above it; at
$\gap_\kk=\bar\gap_\kk$, the subdifferential is
$[P^1_\kk,P^2_\kk]$. Problem \eqref{eq:pooling} is the allocation and
value target for distributed coordination, without requiring a
coordinator to assemble the private objectives or feasible sets.

\subsection{Exchange-ADMM coordination}
\label{subsec:exchange}

Problem \eqref{eq:pooling} and its bid-expanded round form are optimal
exchange problems \citep[\S7.3]{boyd2011admm}. The recourse agent $0$
has $q_0=\gap$ and
$\tilde{\Fval}_0(q_0)=\Phi(q_0)+I_{\mathbb R_+^{|\Kset|}}(q_0)$,
where $I$ is the convex indicator. Each incumbent firm
$m\in\Mset^0$ has $q_m=\inj_m$ and
$\tilde{\Fval}_m(q_m)=\Fval_m(q_m;d_m)$. Each admitted bid
$\edge\in R^r$ is a one-dimensional agent with
$q_\edge=a_\edge x_\edge$ and value function
\begin{equation}
\label{eq:bid-valuefn}
\tilde{\Fval}_\edge(q_\edge)
=\inf_{0\le x_\edge\le\bar x_\edge}
\big\{c_\edge x_\edge:q_\edge=a_\edge x_\edge\big\}.
\end{equation}
This is closed, proper, convex, and polyhedral. With the previously
defined set $\Iset^r=\{0\}\cup\Mset^0\cup R^r$ and
$N_r=|\Iset^r|$, the exchange form is
\begin{equation}
\label{eq:exchange}
\min_{\{q_i\}}\;
\sum_{i\in\Iset^r}\tilde{\Fval}_i(q_i)
\qquad\text{s.t.}\qquad
\sum_{i\in\Iset^r}q_i=0.
\end{equation}
We denote its optimal value by $\Qval(R^r)$; when $R^r=\emptyset$,
this is the fixed-incumbent benchmark $\Qval(\Mset^0)$ in
\eqref{eq:pooling}.

\begin{assumption}[Fixed-network regularity]
\label{ass:regularity}
For a fixed exchange-agent set $\Iset^r$, every
$\tilde{\Fval}_i$ is closed, proper, and convex; problem
\eqref{eq:exchange} is feasible with a finite optimal value; and its
unaugmented Lagrangian admits a saddle point.
\end{assumption}

To accommodate heterogeneous commodity scales, fix declared
characteristic sizes $\sigma_\kk>0$, let
$S=\operatorname{diag}(\sigma_\kk)$, and define the positive-definite
penalty metric
$D_\rho=\rho S^{-2}$ for a scalar tuning parameter $\rho>0$. Write
$\lVert v\rVert_{D_\rho}^2=v^\top D_\rho v$ and
$\bar q^{\,t}=N_r^{-1}\sum_iq_i^t$. The implementation keeps the
coordination state $\scaled^t$ in the original commodity units and
applies
\begin{subequations}
\label{eq:admm}
\begin{align}
q_i^{t+1}&\;\in\;
\argmin_{q_i}\;\Big\{\tilde{\Fval}_i(q_i)
+\tfrac12\big\lVert q_i-q_i^t+\bar q^{\,t}+\scaled^t
\big\rVert_{D_\rho}^2\Big\},
\qquad i\in\Iset^r,
\label{eq:admm-x}\\
\scaled^{t+1}&\;=\;\scaled^t+\bar q^{\,t+1}.
\label{eq:admm-w}
\end{align}
\end{subequations}
The metric expresses each imbalance relative to its declared size
(Section~S2).

A firm implements \eqref{eq:admm-x} on its private operational model:
\begin{equation}
\label{eq:admm-firm}
(y_m^{t+1},\inj_m^{t+1})\;\in\;
\argmin_{(y_m,\inj_m)\in\Xset_m(\Theta_m,d_m)}
\Big\{C_m(y_m;\Theta_m)
+\tfrac12\big\lVert\inj_m-\inj_m^t+\bar q^{\,t}+\scaled^t
\big\rVert_{D_\rho}^2\Big\},
\end{equation}
while the recourse agent solves
\begin{equation}
\label{eq:admm-short}
\gap^{t+1}\;\in\;
\argmin_{\gap\ge0}\;\Big\{\Phi(\gap)
+\tfrac12\big\lVert\gap-\gap^t+\bar q^{\,t}+\scaled^t
\big\rVert_{D_\rho}^2\Big\}.
\end{equation}

The distributed interpretation of \eqref{eq:admm} rests on the following
behavioral assumption, which is not a premise of the convex convergence
result.
\begin{assumption}[Compliant price-taking participation]
\label{ass:compliance}
Each autonomous firm implements \eqref{eq:admm-firm} on its true
private model $(\Theta_m,d_m)$, reports the resulting injection, and
treats the coordination signal as given. Firms neither misreport their
injections nor strategically distort their local models. Recourse and
bid-agent updates are prescribed optimization steps rather than
strategic choices. The resulting multiplier signal is a coordinating
shadow value; we model no transfer scheme, individual-rationality
constraint, or budget-balance property.
\end{assumption}

During coordination, firm $m$ communicates only its verified
participation set $\Kset_m$ and its current injection $\inj_m^{t+1}$.
The coordinator forms $\bar q^{\,t+1}$ and broadcasts
$(\bar q^{\,t+1},\scaled^{t+1})$. The firm's demand, endowment,
objective, constraints, and internal decisions stay within its local
problem. These are protocol-level information boundaries: the messages
are not obfuscated, and the method makes no cryptographic privacy claim.
The quadratic proximal term in \eqref{eq:admm-firm} is part of the
update and cannot be replaced by a price-only response.

The iteration stops at the first $t$ at which the primal and dual
residuals, measured in the same scaled coordinates as the proximal
metric (Section~S2), satisfy
\begin{equation}
\label{eq:res-stop}
r_{\mathrm{pri}}^{\,t}\le\epsilon_{\mathrm{pri}}
=\varepsilon_{\mathrm p}\sqrt{N_r|\Kset|},
\qquad
r_{\mathrm{dual}}^{\,t}\le\epsilon_{\mathrm{dual}}
=\varepsilon_{\mathrm d}\rho\sqrt{N_r|\Kset|},
\end{equation}
for dimensionless settings
$\varepsilon_{\mathrm p},\varepsilon_{\mathrm d}>0$. The penalty $\rho$
is set by a deterministic default and held fixed within each solve
(Section~S4.3), as the convergence result below requires.

The first coordination call starts with $q_i^{0,0}=0$ and
$\scaled^{0,0}=0$. Let $T_r$ be the terminal inner iteration in round
$r$. When warm starting is enabled, the call after an admission starts
from the terminal state
\begin{equation}
\label{eq:warm-state}
\mathcal H_r
=\left(\scaled^{r,T_r},\;\rho^r,\;
\{q_i^{r,T_r}\}_{i\in\Iset^r}\right),
\end{equation}
with the new bid at zero (Section~S2). This initialization
can change finite-iteration paths, effort, and the returned price, but
not the round optimum.

\begin{proposition}[Fixed-agent-set coordination]
\label{prop:convergence}
Under \cref{ass:regularity}, for fixed $\Iset^r$, $S$, and $\rho>0$,
and exact solution of each local proximal subproblem, the iteration
\eqref{eq:admm} satisfies
\[
\sum_{i\in\Iset^r}\tilde{\Fval}_i(q_i^t)\longrightarrow
\Qval(R^r),
\qquad
r_{\mathrm{pri}}^{\,t}\longrightarrow0,
\qquad
r_{\mathrm{dual}}^{\,t}\longrightarrow0.
\]
The multiplier sequence for the original clearing constraint,
$y^t=D_\rho\scaled^t$, converges to an optimal multiplier
$y^{r,\star}$ of \eqref{eq:exchange}.
\end{proposition}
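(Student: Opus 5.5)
The plan is to reduce the scaled iteration \eqref{eq:admm} to the standard two-block ADMM of \citet[\S7.3]{boyd2011admm} written in a weighted norm, and then apply the classical ADMM convergence theorem \citep[\S3.2 and Appendix~A]{boyd2011admm}. Concretely, write the exchange problem \eqref{eq:exchange} as
\[
\min\ \sum_{i\in\Iset^r}\tilde{\Fval}_i(q_i)+I_{\mathcal C}(p)
\quad\text{s.t.}\quad q_i-p_i=0,\ i\in\Iset^r,
\qquad
\mathcal C=\Big\{p:\textstyle\sum_i p_i=0\Big\}.
\]
The first block $f(q)=\sum_i\tilde{\Fval}_i(q_i)$ is separable. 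The second block $g(p)=I_{\mathcal C}(p)$ is closed, proper, and convex.

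The first step is the change of variables $\hat q_i=D_\rho^{1/2}q_i=\sqrt\rho\,S^{-1}q_i$. Under this map, $\tfrac12\lVert\cdot\rVert_{D_\rho}^2$ becomes $\tfrac{1}{2}\lVert\cdot\rVert_2^2$, or equivalently $\tfrac{\rho}{2}\lVert\cdot\rVert_2^2$ in the variables $S^{-1}q_i$. The transformed functions $\hat f_i(\hat q_i)=\tilde{\Fval}_i(D_\rho^{-1/2}\hat q_i)$ are still closed, proper, and convex. The clearing constraint is preserved, because $D_\rho^{1/2}$ is invertible and acts identically on every agent. I will then verify the standard identities for the exchange form. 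The $p$-update is the Euclidean projection onto $\mathcal C$, which gives $p_i^{t+1}=q_i^{t+1}-\bar q^{\,t+1}$. Substituting this into the $q$-update yields exactly the proximal argument $q_i-q_i^t+\bar q^{\,t}+\scaled^t$ of \eqref{eq:admm-x}. The scaled dual update is the averaged update \eqref{eq:admm-w}, because all agent duals coincide after one step. These are the usual derivations from \S7.3, carried out in the $D_\rho$ metric.

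The second step checks the hypotheses of the convergence theorem. \Cref{ass:regularity} gives the closed, proper, convex $\tilde{\Fval}_i$. It also gives a saddle point of the unaugmented Lagrangian of \eqref{eq:exchange}, and this lifts to a saddle point of the split problem: take the common multiplier $y$ for every copy $q_i=p_i$. Exact solution of the subproblems is assumed, and $\rho$ and $S$ are fixed. The theorem then gives three conclusions: residual convergence $r^t\to0$ and $s^t\to0$, objective convergence $f(q^t)+g(p^t)\to p^\star=\Qval(R^r)$, and dual convergence to an optimal dual. Here the objective convergence uses $\sum_i\tilde{\Fval}_i(q_i^t)$, since $g(p^t)=0$ along the iterates. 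The primal and dual residuals $r_{\mathrm{pri}}^t$ and $r_{\mathrm{dual}}^t$ of Section~S2 are, up to the fixed scalings $\sqrt{N_r}$ and $\rho$, norms of these transformed residuals. Hence they also tend to zero.

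The final step, and the main obstacle, is the multiplier statement. Standard ADMM convergence proofs give only dual objective convergence and $y$-convergence in some formulations. To get actual convergence of $y^t$, I would invoke the stronger result that ADMM iterates converge (Eckstein--Bertsekas, via the equivalence of ADMM to Douglas--Rachford splitting on the dual, a firmly nonexpansive iteration). That result gives convergence of the scaled dual $\scaled^t$ to some $\scaled^\star$ whenever a saddle point exists. I would then undo the metric. In the scaled formulation, the unscaled multiplier of the original clearing constraint $\sum_i q_i=0$ is $y=D_\rho\scaled$: the augmented term $\tfrac12\lVert\cdot+\scaled\rVert_{D_\rho}^2$ expands to the linear term $(D_\rho\scaled)^\top(\cdot)$. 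Care is needed with the factor between the per-agent copy multipliers and the single exchange multiplier. Each copy multiplier equals $D_\rho\scaled$, and the KKT condition $0\in\partial\tilde{\Fval}_i(q_i^\star)+y^\star$ for all $i$ identifies $y^\star$ as an optimal multiplier of \eqref{eq:exchange}. Checking that the $1/N_r$ averaging does not introduce an extra factor of $N_r$ is the bookkeeping step I expect to need the most attention.
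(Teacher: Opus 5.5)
Your proposal is correct and follows essentially the same route as the paper. The paper substitutes $\tilde q_i=S^{-1}q_i$ and keeps the scalar penalty $\rho$ rather than absorbing $\sqrt{\rho}$ into the coordinates as you do; it then applies the standard two-block exchange-ADMM result under a saddle point and maps the transformed multiplier back via $S^{-1}(\rho\tilde w^t)=\rho S^{-2}w^t=D_\rho w^t$. Your explicit saddle-point lifting and your appeal to Eckstein--Bertsekas for convergence of the multiplier iterates are more careful than the paper's sketch, and the $N_r$ bookkeeping resolves as you indicate, because the normal cone of $\mathcal C$ consists of constant tuples, so each copy multiplier equals the single exchange multiplier with no extra factor.
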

The proof reduces \eqref{eq:admm} to standard scalar-penalty exchange ADMM
by the coordinate change $\tilde q_i=S^{-1}q_i$
(Section~S2).

\subsection{Shadow-price recovery and interpretation}
\label{subsec:dual}

The exchange formulation uses one sign convention throughout: the shadow
price is the economically signed multiplier of the capacity-balance
constraint, so a positive value indicates scarcity. Let
\[
\Lambda_r^\star
=\{-y:y\text{ is an optimal multiplier of \eqref{eq:exchange}}\}
\]
be the set of shadow prices in round $r$. Under the metric
in \eqref{eq:admm}, the running exchange multiplier is
$y^t=D_\rho\scaled^t$, so
\begin{equation}
\label{eq:price-recovery}
\dual^t=-y^t=-D_\rho\scaled^t,
\qquad
\dual^t\xrightarrow[t\to\infty]{}
\dual^{r,\star}\in\Lambda_r^\star
\end{equation}
by \cref{prop:convergence}. At a finite terminal iteration $T_r$,
the numerical price is $\hat\dual^r=-D_\rho\scaled^{r,T_r}$ and need
not belong to $\Lambda_r^\star$.

At a round optimum, each agent's choice minimizes its own cost net of
the value of its injection at an exact shadow price
(Section~S2). This characterizes the round optimum; it is
not the proximal update \eqref{eq:admm-firm} or a settlement scheme.

To define the marginal value of a specified injection, perturb the
full round problem by $b\in\mathbb R^{|\Kset|}$:
\begin{equation}
\label{eq:perturbation}
V_r(b)
=\inf_{\{q_i\}}
\left\{\sum_{i\in\Iset^r}\tilde{\Fval}_i(q_i):
\sum_{i\in\Iset^r}q_i+b=0\right\}.
\end{equation}
Hence $V_r(0)=\Qval(R^r)$, including recourse and every bid already
admitted in round $r$. The standard perturbation-value interpretation of optimal multipliers gives
the following result \citep[\S5.6]{boyd2004convex}.

\begin{proposition}[Shadow-price sensitivity]
\label{cor:sensitivity}
Under \cref{ass:regularity},
\begin{equation}
\label{eq:shadow}
\partial V_r(0)
=\{-\dual:\dual\in\Lambda_r^\star\},
\qquad
V_r(b)\ge V_r(0)-\dual^\top b
\quad
(\dual\in\Lambda_r^\star).
\end{equation}
If $V_r$ is differentiable at zero, then
$\Lambda_r^\star=\{-\nabla V_r(0)\}$.
\end{proposition}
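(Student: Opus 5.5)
The plan is to view $V_r$ as the optimal-value function of a convex program under right-hand-side perturbation and apply standard Lagrangian duality. I will first show that $V_r$ is convex. Given feasible families $\{q_i\}$ for $b$ and $\{q_i'\}$ for $b'$, the convex combination $\{\theta q_i+(1-\theta)q_i'\}$ is feasible for $\theta b+(1-\theta)b'$, because the constraint is linear. Convexity of each $\tilde{\Fval}_i$ then gives $V_r(\theta b+(1-\theta)b')\le\theta V_r(b)+(1-\theta)V_r(b')$. Under \cref{ass:regularity}, $V_r(0)=\Qval(R^r)$ is finite.

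Next, I will use the Lagrangian $L(q,y)=\sum_i\tilde{\Fval}_i(q_i)+y^\top\sum_iq_i$ and the dual function $g(y)=\inf_qL(q,y)$. The existence of a saddle point $(q^\star,y^\star)$ gives strong duality, $V_r(0)=g(y^\star)$. It also identifies optimal multipliers with maximizers of $g$, since at a saddle point $y^\star$ maximizes $g$ and $g(y^\star)=L(q^\star,y^\star)=V_r(0)$. For any optimal multiplier $y$ and any $b$ with $\{q_i\}$ feasible, $\sum_iq_i=-b$. Hence
\[
\sum_i\tilde{\Fval}_i(q_i)=L(q,y)+y^\top b\ge g(y)+y^\top b=V_r(0)+y^\top b.
\]
Taking the infimum over feasible families gives $V_r(b)\ge V_r(0)+y^\top b$, so $y\in\partial V_r(0)$. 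Writing $y=-\dual$ with $\dual\in\Lambda_r^\star$, this is both the displayed inequality and the inclusion $\{-\dual:\dual\in\Lambda_r^\star\}\subseteq\partial V_r(0)$.

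For the reverse inclusion, take $y\in\partial V_r(0)$, so $V_r(b)\ge V_r(0)+y^\top b$ for all $b$. For any family $\{q_i\}$, set $b=-\sum_iq_i$; this family is feasible for that $b$. Then
\[
\sum_i\tilde{\Fval}_i(q_i)\ge V_r(b)\ge V_r(0)-y^\top\sum_iq_i,
\]
that is, $L(q,y)\ge V_r(0)$ for every $q$. So $g(y)\ge V_r(0)$, and weak duality ($g\le V_r(0)$) forces $g(y)=V_r(0)$. This makes $y$ dual optimal. Combining it with any primal optimum $q^\star$ (one exists, since the saddle point provides one) gives complementary slackness, so $(q^\star,y)$ is a saddle point and $y$ is an optimal multiplier.

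The step needing most care is this reverse inclusion: "optimal multiplier" must be read consistently with the saddle-point notion in \cref{ass:regularity}, and attainment of the primal comes from that assumption. Finally, if $V_r$ is differentiable at $0$, convexity gives $\partial V_r(0)=\{\nabla V_r(0)\}$, so $\Lambda_r^\star=\{-\nabla V_r(0)\}$. The differentiability clause also needs $0\in\operatorname{int}\operatorname{dom}V_r$; the result simply cites \citep[\S5.6]{boyd2004convex} for this point.
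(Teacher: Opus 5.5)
Your proof is correct and is essentially the paper's argument. The affine bound and the inclusion $\{-\dual:\dual\in\Lambda_r^\star\}\subseteq\partial V_r(0)$ come from the saddle-point (dual-function) inequality, and the reverse inclusion applies the subgradient inequality at $b=-\sum_i q_i$ to show that the round optimum minimizes the Lagrangian at economic price $-y$. Your explicit convexity argument for $V_r$, on which the differentiability clause relies, fills in a step that the paper's sketch leaves implicit.
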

A proof sketch is given in Section~S2.

The recourse agent links prices to capacity gaps through its KKT
conditions \citep[\S5.5]{boyd2004convex}. For the two-tier cost
\eqref{eq:twotier}, at any round optimum $(\gap^{r,\star},\dual^{r,\star})$,
these conditions give
\begin{equation}
\label{eq:twotier-kkt}
\begin{cases}
\dual^{r,\star}_\kk\le P^1_\kk,
& \gap^{r,\star}_\kk=0,\\
\dual^{r,\star}_\kk=P^1_\kk,
& 0<\gap^{r,\star}_\kk<\bar\gap_\kk,\\
\dual^{r,\star}_\kk\in[P^1_\kk,P^2_\kk],
& \gap^{r,\star}_\kk=\bar\gap_\kk,\\
\dual^{r,\star}_\kk=P^2_\kk,
& \gap^{r,\star}_\kk>\bar\gap_\kk.
\end{cases}
\end{equation}
Linear recourse is the special case without a kink. Positive recourse away
from the kink therefore pins the shadow price to the local recourse
slope. At the kink, recourse optimality restricts the price to
$[P^1_\kk,P^2_\kk]$, while the other agents may narrow this interval or
pin down a unique price. At zero recourse, price may contain marginal information not fixed by the
recourse schedule, but
$[\dual^{r,\star}_\kk]_+\gap^{r,\star}_\kk=0$. Thus the gap locates a
commodity on which the current network depends on fallback capacity;
the price supplies a supporting value for a specified injection.

\subsection{Coordination outputs and computational roles}
\label{subsec:outputs}

At the solver boundary, a finite exchange-ADMM call produces the
terminal record
\begin{equation}
\label{eq:outputs-hat}
\begin{aligned}
\widehat{\mathcal T}_r
=\big(&\widehat{\Qval}^{\,r},\;\hat\gap^r,\;\hat\dual^r,\;
 r_{\mathrm{pri}}^{\,r,T_r},\;r_{\mathrm{dual}}^{\,r,T_r},\;
 T_r,\;\zeta_r\big),\\
&\hat\dual^r=-D_\rho\scaled^{r,T_r},
\qquad
\zeta_r\in\{\texttt{tolerance\_met},\texttt{iteration\_cap}\}.
\end{aligned}
\end{equation}
The status $\zeta_r$ is derived from the solver's convergence flag:
the first value means that both tests in \eqref{eq:res-stop} hold, and
the second records exhaustion of the configured iteration limit.
Without an additional error bound, the residuals do not certify the
distance to an exact optimizer, objective value, or shadow price.

The reported gap $\hat\gap^r$ includes a one-sided residual adjustment,
and $\widehat{\Qval}^{\,r}$ is a reported cost statistic, not a certified
optimal value (Section~S2).

The computational roles are distinct. Formal results use an exact
round solution, with value $\Qval(R^r)$ and a shadow price
$\dual^{r,\star}\in\Lambda_r^\star$; this value is well defined even
when the allocation or the price is nonunique. The mechanism's
need-selection and bid-admission decisions consume only
$(\hat\gap^r,\hat\dual^r)$ from \eqref{eq:outputs-hat}, and
$\widehat{\Qval}^{\,r}$ is recorded for evaluation only. The adapter
proceeds under either status, so any action after
\texttt{iteration\_cap} is a finite-iteration heuristic.

Operationally, $\hat\gap^r$ directs pre-offer need selection and
$\hat\dual^r$ screens a verified and quoted bid for admission, as
\cref{sec:method} describes.

\section{Coordination-Directed Open Capacity Discovery and Admission}
\label{sec:method}

\Cref{fig:method} shows how the inner coordination loop of
\cref{sec:setting} is embedded in an outer loop that expands the pool.
Before search begins, the language model reads each public capacity card
once into an evidence-backed semantic index (\cref{subsec:oracle}). Each
outer round then has four steps. First, warm-started exchange-ADMM
coordinates the incumbents, the admitted bids, and the recourse agent,
and returns gap and price estimates $(\hat\gap^r,\hat\dual^r)$ with a
convergence status. Second, bids quoted in earlier
rounds are repriced at $\hat\dual^r$, and the best qualifying one is
admitted without a new engagement (\cref{subsec:pricing}). Third, if none
qualifies, scale-normalized gaps define discovery briefs, and fixed rules
query the index to choose the next provider to engage
(\cref{subsec:brief,subsec:oracle}). Fourth, verification and a request
for quotation (RFQ) reveal the provider's bid column, which enters the
repository and is admitted if it passes reduced-cost screening at
$\hat\dual^r$; otherwise search continues
(\cref{subsec:rfq,subsec:pricing}). In this loop, gaps
direct search before an offer is known, and prices value an offer once it
is revealed.
\Cref{subsec:stop} states the stopping rules and what the loop guarantees.

\begin{figure}[!htbp]
\centering
\includegraphics[width=0.8\textwidth]{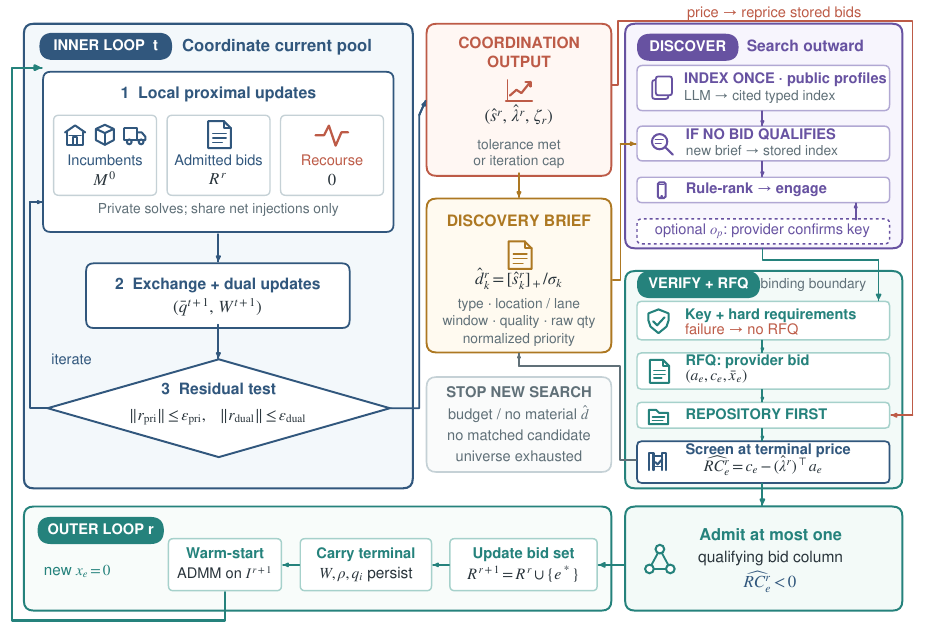}
\caption{Nested coordination and pool expansion: the inner exchange-ADMM
loop (left; \cref{sec:setting}) and the outer loop of discovery,
verification, repository-first admission, and warm-started
re-coordination (\cref{sec:method}).}
\label{fig:method}
\end{figure}

The incumbent set is $\Mset^0$, and the round-$r$ exchange agents remain
$\Iset^r=\{0\}\cup\Mset^0\cup R^r$. Here $R^r$ indexes admitted bid
agents, each carrying one fixed bid column; $\Bset^r$ is the persistent
repository of verified and quoted but unadmitted bids; and $U^r$
contains uncontacted provider records. Admission adds the bid agent rather
than the provider's private operating model, and the resulting round
objective is $\Qval(R^r)$.

The design problem is to reduce the terminal operating cost
$\Qval(R^r)$ with at most $K$ provider engagements, without exposing a
private firm model to the discovery layer and with binding decisions
reserved for verification, quotation, and reduced-cost screening. Unlike
classical column generation \citep{lubbecke2005column}, no pricing problem
can generate a candidate column: each contact is a metered action that
reveals at most one latent column, which only then can be priced.

\subsection{Coordination-gap priorities and discovery briefs}
\label{subsec:brief}

Before an external bid is known, the mechanism uses the reported
recourse vector to identify where the current network still relies on
fallback capacity. Because commodities have different physical units,
raw gaps are not comparable across coordinates. Using the declared
characteristic sizes $\sigma_\kk$ from \cref{subsec:exchange}, define
the dimensionless coordination gap and the active need set by
\begin{equation}
\label{eq:coord-gap}
\hat d^r_\kk
=\frac{[\hat\gap^r_\kk]_+}{\sigma_\kk},
\qquad
\Kset_+^r
=\{\kk\in\Kset:\hat d^r_\kk>\varepsilon_{\mathrm{brief}}\},
\end{equation}
where $\varepsilon_{\mathrm{brief}}\ge0$ is a dimensionless numerical
dust threshold.

This signal targets emergency-recourse replacement. An empty
$\Kset_+^r$ means that no material fallback need remains, not that
external capacity has no value, and $\hat d^r_\kk$ measures relative
fallback dependence rather than the saving from a contact.

The contact rule interleaves active needs rather than exhausting the
largest one. Let $n^r_\kk$ be the cumulative number of provider
engagements previously assigned to need $\kk$, including failed
verification and non-improving quotations. At the next engagement the
score of an active need $\kk\in\Kset_+^r$ is
\begin{equation}
\label{eq:need-interleave}
\hat\psi^r_\kk
=\frac{\hat d^r_\kk}{1+n^r_\kk},
\end{equation}
and the mechanism selects the highest-scoring need that still has an
uncontacted candidate. Exact ties are resolved by a predeclared,
policy-independent pseudo-random order, not by a price or input order.

The scales $\sigma_\kk$ are declared before provider search from
information available at that time, such as a recourse tier, incumbent
capacity, or a historical gap.

Each active need produces a structured discovery brief
\begin{equation}
\label{eq:brief}
\brief(\kk)=\big(\text{type}_\kk,\;\ell_\kk,\;\omega_\kk,\;\chi_\kk;\;
\hat d^r_\kk\big).
\end{equation}
The first four fields are the commodity's resource family, location or
lane, time window, and quality class, and they govern card matching.
The normalized gap governs attention across heterogeneous needs. Every
field is a deterministic function of the commodity definition, declared
scale, and reported gap. No shadow price enters
\eqref{eq:coord-gap}--\eqref{eq:need-interleave}; prices are used only
after a bid is revealed (\cref{subsec:pricing}).

\subsection{One-time semantic indexing and coordination-conditioned retrieval}
\label{subsec:oracle}

Let $p\in\Eset$ index an enumerable external-provider record and let
$C_p$ denote its pre-existing public capacity card, the provider's
self-description or directory profile. The card can contain
indicative capacity ranges and qualitative commercial language, but not
a verified, contractible injection map, numerical unit price, or
binding capacity limit. Obtaining those terms requires provider
engagement, verification, and quotation.

The language model is the mechanism's reader of provider prose. Before
need matching begins, it reads every card $C_p$ once and translates it
into a brief-independent typed record $T_p$. Its input is the card text
together with the public commodity schema, the geography of service
cells and lanes, and the planning calendar; no brief, bid, or quotation
is part of the input. Its output has six fields: resource family,
service location or lane, time window, quality class, an indicative
quantity band, and commercial stance. For each field, the model maps the
card's wording onto the schema, recognizing synonyms and indirect
statements such as a district name that implies a service cell or a
certification that implies a quality class. It records the extracted
value, whether the card states it explicitly or only indirectly, and a
verbatim span that supports it. A deterministic parser checks every span
against the card text and records an unsupported field as not stated.
The cached records form a reusable semantic index (Section~S6).
The model thus performs the step that fixed rules handle poorly,
translating heterogeneous prose into a common schema, while matching,
verification, and pricing remain deterministic.

The base regime, which we call the hidden-key regime, sets $o_p=\bot$:
the card describes capabilities, but the key of provider $p$'s standing
offer remains hidden. A counterfactual registered-key regime instead
supplies one provider-confirmed key $o_p\in\Kset$ before provider
selection. Registration reveals only the
offer's commodity identity. Its injection coefficient, price, and
contractible limit remain hidden until successful verification and RFQ.
When $o_p\ne\bot$, retrieval retains provider $p$ only for need
$\kk=o_p$; the semantic score still orders retained providers.
Registration restricts the candidate set but does not
override the card screen: a not-met attribute in $T_p$ still removes the
provider from a need.

For an active need, a deterministic matcher queries the index with the
brief and compares each record $T_p$ with the need on four card-screenable
attributes: resource family, location or lane, time window, and quality
class. An attribute that the card contradicts removes the provider from
that need before contact, whereas an unstated attribute does not, so
verification resolves missing evidence. The contact score
$\widetilde\xi_{p\kk}$, the share of attributes met with indirect evidence
discounted and a mild adjustment for commercial language, only orders
contacts (Section~S3).

\subsection{Provider engagement, verification, and bid revelation}
\label{subsec:rfq}

After the mechanism selects provider record $p$ for active need $\kk$,
one engagement is consumed and the provider is removed from the
uncontacted set. The engagement first verifies the provider for the
selected need, with outcome $v_{p\kk}\in\{0,1\}$. A failed verification
($v_{p\kk}=0$) discards the provider and counts as an engagement but not
as an RFQ. A successful one ($v_{p\kk}=1$) is followed by an RFQ that
reveals provider $p$'s bid $\edge(p)$.

Under \cref{ass:fixed-offer} below, each provider holds at most one
offer, and the base implementation specializes each offer to one
commodity key. By \cref{ass:verify}, a successful contact therefore has
$\operatorname{supp}(a_\edge)=\{\kk\}$. In the hidden-key regime,
selecting another key causes verification failure and no RFQ. Exact key
registration removes that routing failure by construction, while
verification still binds compatibility, bid validity, and quotation.

Denote the revealed bid by
\begin{equation}
\label{eq:edge}
\edge=\edge(p)=(a_\edge,c_\edge,\bar x_\edge),
\end{equation}
where $a_\edge\ge0$ is the verified injection per unit of bid uptake,
$c_\edge\ge0$ is its quoted unit cost, and $\bar x_\edge>0$ is its
contractible limit. The numerical fields and commodity keys must be
finite and valid.

Every successfully revealed bid enters the persistent repository
$\Bset^r$, whether or not it qualifies for immediate admission. Until
admitted, it remains available for re-pricing after every new
coordination state.

\begin{assumption}[Selected-need verification soundness]
\label{ass:verify}
If $v_{p\kk}=1$, then $\kk\in\operatorname{supp}(a_\edge)$, every key
in $\operatorname{supp}(a_\edge)$ satisfies the declared hard
compatibility rules, and $(a_\edge,c_\edge,\bar x_\edge)$ satisfies the
validity conditions above. A failed check yields no bid. Soundness
rules out false-positive bid revelation but does not assert that every
eligible provider is found or accepted.
\end{assumption}

\begin{assumption}[Fixed offer and deterministic delivery]
\label{ass:fixed-offer}
Each provider has at most one offer, fixed before the response. After a
successful verification, RFQ returns that offer; providers do not
subsequently decline, bargain, withdraw, or re-price it. If admitted,
the bid supplies $a_\edge x_\edge$ at cost $c_\edge x_\edge$ for the
uptake selected by the exchange problem,
$0\le x_\edge\le\bar x_\edge$. Delivery failure and post-admission
transaction costs are outside the base operating model.
\end{assumption}

\subsection{Shadow-price screening and re-coordination}
\label{subsec:pricing}

A revealed bid $\edge\notin R^r$ defines the would-be optional agent in
\eqref{eq:bid-valuefn}; it does not enter the round exchange problem
until admission. For any selected exact shadow price
$\dual\in\Lambda_r^\star$, its reduced cost is
\begin{equation}
\label{eq:erc}
\rc^r_\edge(\dual)=c_\edge-\dual^\top a_\edge.
\end{equation}
At the terminal price estimate, the operational counterpart is
\begin{equation}
\label{eq:erc-hat}
\widehat{\rc}^{\,r}_\edge
=c_\edge-\hat\dual^{r\top}a_\edge.
\end{equation}
Only verified and quoted terms enter these expressions; pre-contact
evidence affects which bid becomes known, not its value once revealed.

After each coordination call, all bids in $\Bset^r$ are repriced before
new discovery begins. The operational screen is a negative estimated
reduced cost, so the qualifying stored bids are
\begin{equation}
\label{eq:admit}
A^r=\{\edge\in\Bset^r:\widehat{\rc}^{\,r}_\edge<0\}.
\end{equation}
If $A^r$ is nonempty, the mechanism admits the bid in $A^r$ with the
most negative estimated reduced cost, breaking ties by provider
identifier. The admitted bid moves from the repository $\Bset^r$ to the
admitted set $R^{r+1}$, and all other quoted bids remain in the
repository. Thus at most one bid is
admitted between coordination calls, including when a qualifying bid
was already known and no new engagement is required.

Admission changes the restricted exchange problem and can change every
remaining bid's reduced cost, without implying that any individual
price coordinate must fall. The expanded agent set is therefore
re-coordinated before another admission, warm-started by
\eqref{eq:warm-state} when enabled.

Because the uptake of an admitted bid can be zero and no fixed admission
charge enters $\Qval$, admitting an optional bid never raises the exact
restricted optimum $\Qval(R^r)$. It strictly lowers that optimum when its
unit cost $c_\edge$ is below the marginal value of its injection
$a_\edge$, which at a unique exact shadow price is a negative reduced cost
\eqref{eq:erc}. At a kink of the recourse schedule, where exact prices are
nonunique, the reduced cost must be negative even at the exact price that
values the injection least. Proposition~S3.1 in Section~S3
states and proves this result for the exact objective, not for the finite
statistic $\widehat{\Qval}^{\,r}$.

\subsection{Search budgets, stopping, and guarantee boundary}
\label{subsec:stop}

The response can price each staged information action as
\begin{equation}
\label{eq:search-cost}
\begin{aligned}
C_{\mathrm{search}}
&=c_{\mathrm{read}}N_{\mathrm{read}}
+c_{\mathrm{reg}}N_{\mathrm{reg}}
+c_{\mathrm{eng}}N_{\mathrm{eng}}
+c_{\mathrm{rfq}}N_{\mathrm{rfq}},\\
&\text{with }N_{\mathrm{rfq}}\le N_{\mathrm{eng}}\le K,
\qquad N_{\mathrm{reg}}\le|\Eset|.
\end{aligned}
\end{equation}
Here $N_{\mathrm{reg}}$ counts provider records carrying a confirmed
offered key before search: it is zero in the hidden-key regime and
$|\Eset|$ when every provider is registered. The engagement charge
includes verification but excludes the conditional RFQ component in
this decomposition. Search cost is outside the operating objective $\Qval$;
optional-column non-worsening therefore does not imply non-worsening
total response cost.

Stopping applies only to new engagement. After each coordination call,
the mechanism first reprices every bid in $\Bset^r$ and admits one
qualifying stored bid if available, since its discovery and verification
costs are already sunk. Only when no stored bid qualifies does it evaluate
three stop causes. The material-gap stop is
\begin{equation}
\label{eq:stop-sat}
\max_{\kk\in\Kset}\hat d^r_\kk\le d_{\min},
\end{equation}
with dimensionless $d_{\min}\ge\varepsilon_{\mathrm{brief}}$. The budget stop applies when $N_{\mathrm{eng}}=K$, and the
exhaustion stop applies when every record has been engaged or no
uncontacted record matches an active need. None of these stops is an
optimality certificate. \Cref{alg:loop} states the complete loop.

\begin{algorithm}[!htbp]
\caption{Coordination-gap-directed discovery and reduced-cost admission}
\label{alg:loop}
\begin{algorithmic}[1]
\STATE \textbf{input:} incumbents $\Mset^0$, provider records $\Eset$,
optional provider-confirmed offered keys, engagement budget $K$,
thresholds
$\varepsilon_{\mathrm{brief}}$ and $d_{\min}$, and a configured
coordination method with residual tolerances and an iteration cap
\STATE \textbf{index once:} normalize each available card to a stored
typed record; attach any provider-confirmed offered key; set
$N_{\mathrm{read}}$ and $N_{\mathrm{reg}}$ accordingly
\STATE \textbf{initialize:} $R^0\leftarrow\emptyset$,
$\Bset^0\leftarrow\emptyset$, uncontacted records
$U\leftarrow\Eset$, $N_{\mathrm{eng}}\leftarrow0$,
$N_{\mathrm{rfq}}\leftarrow0$, and $n_\kk\leftarrow0$ for every
$\kk\in\Kset$; the need counters persist across outer rounds
\FOR{$r=0,1,2,\dots$}
  \STATE coordinate $\Iset^r=\{0\}\cup\Mset^0\cup R^r$, using the state
  transfer \eqref{eq:warm-state} when enabled, until residual tolerance
  or iteration cap; obtain
  $(\hat\gap^r,\hat\dual^r,\zeta_r)$
  \STATE reprice every $\edge\in\Bset^r$ by \eqref{eq:erc-hat}; form
  $A^r$ by \eqref{eq:admit}
  \IF{$A^r\ne\emptyset$}
    \STATE move the bid in $A^r$ with the most negative
    $\widehat{\rc}^{\,r}_\edge$ (ties by provider identifier) from
    $\Bset^r$ to $R^{r+1}$, and \textbf{continue}
    \COMMENT{no new engagement}
  \ENDIF
  \IF{$\max_\kk\hat d^r_\kk\le d_{\min}$ \textbf{or}
  $N_{\mathrm{eng}}\ge K$ \textbf{or} $U=\emptyset$}
    \STATE \textbf{stop} with cause: no material gap, exhausted budget,
    or exhausted provider universe
  \ENDIF
  \STATE construct briefs \eqref{eq:brief}; query the stored semantic
  index; when $o_p\ne\bot$, retain $(p,\kk)$ only if $o_p=\kk$; compute contact
  scores $\widetilde\xi_{p\kk}$
  by (S19) in Section~S3
  \STATE $\mathsf{qual}\leftarrow\textbf{false}$
  \REPEAT
    \IF{no matched provider--need pair remains}
      \STATE \textbf{break} and record cause: no matched candidate
    \ENDIF
    \STATE select need $\kk$ by \eqref{eq:need-interleave}; select its
    highest-scoring provider $p$, breaking ties by the fixed pseudo-random order
    \STATE $U\leftarrow U\setminus\{p\}$;
    $N_{\mathrm{eng}}\leftarrow N_{\mathrm{eng}}+1$;
    $n_\kk\leftarrow n_\kk+1$
    \STATE verify $(p,\kk)$
    \IF{verification succeeds}
      \STATE issue RFQ; $N_{\mathrm{rfq}}\leftarrow N_{\mathrm{rfq}}+1$;
      reveal $\edge=\edge(p)$ and add it to $\Bset^r$;
      compute $\widehat{\rc}^{\,r}_\edge$;
      $\mathsf{qual}\leftarrow(\widehat{\rc}^{\,r}_\edge<0)$
    \ELSE
      \STATE record verification failure
    \ENDIF
  \UNTIL{$\mathsf{qual}=\textbf{true}$, $N_{\mathrm{eng}}\ge K$, or no matched
  candidate remains}
  \STATE recompute $A^r$
  \IF{$A^r\ne\emptyset$}
    \STATE admit one bid by the same rule; \textbf{continue}
    \COMMENT{admit even if the last engagement exhausted $K$}
  \ELSE
    \STATE \textbf{stop} with cause: exhausted budget or no matched
    candidate
  \ENDIF
\ENDFOR
\STATE \textbf{return} the current exchange-agent set
$\{0\}\cup\Mset^0\cup R^r$, its last coordination record, and the
reported stop cause
\end{algorithmic}
\end{algorithm}

For the exact finite-universe benchmark, distinguish the raw provider
records $\mathcal P=\Eset$ from the bid columns they can reveal. Let
$\mathcal C_{\mathrm{elig}}$ be the finite set of genuinely eligible
columns generated under \cref{ass:verify,ass:fixed-offer}. For this
benchmark only, the need-directed contact rule of
\cref{subsec:brief,subsec:oracle} is replaced by a fixed exhaustive queue over
$\mathcal P$, and each provider is engaged for its true offered key, as
if every offered key were registered (\cref{subsec:oracle}). Engaging
every record is not enough on its own: in the hidden-key regime, a
contact for another key fails and discards the provider without
revealing its offer. The benchmark therefore uses information that the
implementable hidden-key policy does not have. Verification, RFQ,
repository, and admission rules are otherwise unchanged.

\begin{proposition}[Finite termination and finite-column optimality]
\label{prop:termination}
Suppose every $p\in\mathcal P$ is eventually engaged for its true offered
key, so that every genuinely eligible provider passes verification and
returns its fixed bid, and
every revealed unadmitted bid persists. Suppose further that matching
gates, the engagement budget, the material-gap stop, an
iteration cap, and any maximum-round truncation are disabled, and that every
restricted exchange problem and one associated shadow price are
solved exactly. Then the loop makes at most
$|\mathcal C_{\mathrm{elig}}|\le|\mathcal P|$ admissions and terminates
finitely. At termination, one exact restricted-master price gives
nonnegative reduced cost to every excluded column, and the returned
restricted solution is optimal for the master containing all columns
in $\mathcal C_{\mathrm{elig}}$.
\end{proposition}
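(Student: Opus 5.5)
The plan has three parts: bound the number of admissions, establish finite termination, and then derive optimality from a standard column-generation certificate. Throughout, we use exact duality for the restricted exchange problem, via \cref{ass:regularity} and \cref{cor:sensitivity}, together with the bid value functions \eqref{eq:bid-valuefn}.

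\emph{Counting and termination.} Under \cref{ass:fixed-offer}, each provider holds at most one fixed offer. Admission moves a bid from $\Bset^r$ to $R^{r+1}$, and nothing ever removes a bid from $R$. So each column in $\mathcal C_{\mathrm{elig}}$ is admitted at most once. By \cref{ass:verify}, only eligible columns are revealed, because there are no false positives. Hence the number of admissions is at most $|\mathcal C_{\mathrm{elig}}|\le|\mathcal P|$.

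Engagements are also finite: every engagement removes one record from $U$, so there are at most $|\mathcal P|$ of them. Each outer round ends in one of three ways: it admits a bid, or it engages until a qualifying bid appears (and then admits), or it stops. Every round that continues therefore makes exactly one admission, and each coordination call is exact by hypothesis. The total number of rounds is thus at most $|\mathcal C_{\mathrm{elig}}|+1$.

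\emph{Pricing certificate at termination.} With matching gates, the budget, the material-gap stop, the cap, and round truncation all disabled, the loop can stop only in two situations: the universe is exhausted ($U=\emptyset$), or the repeat loop finds no remaining candidate and $A^r=\emptyset$. In the exhaustive benchmark queue every provider is eventually engaged for its true key. Two consequences follow at the terminal round $r$:
\begin{enumerate}
\item every column of $\mathcal C_{\mathrm{elig}}$ has been revealed;
\item every revealed column is either in $R^r$ or persists in $\Bset^r$.
\end{enumerate}
With exact solves, $\hat\dual^r=\dual^{r,\star}\in\Lambda_r^\star$, and the terminal repricing gives $A^r=\emptyset$. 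That is, $\rc^r_\edge(\dual^{r,\star})\ge0$ for every excluded $\edge\in\mathcal C_{\mathrm{elig}}\setminus R^r$, which is the claimed certificate.

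One point needs care. Stopping is only evaluated after a fresh coordination call followed by repricing, so the certificate refers to the final restricted price, not to a stale one. Bids revealed during the last repeat loop were screened at that same price, since the round's price does not change inside the loop.

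\emph{Optimality of the restricted solution (main step).} Let $(\{q_i^\star\},\dual^\star)$ be the exact saddle pair of the restricted problem \eqref{eq:exchange} with agent set $\Iset^r$. Form the full master by adding each excluded column as an agent $\tilde\Fval_\edge$, and extend the restricted solution with $q_\edge=0$. The extended point is feasible and has the same objective value. To show it is optimal, we use Lagrangian duality with the multiplier $y=-\dual^\star$. For any feasible point of the full master,
\[
\sum_i\tilde\Fval_i(q_i)\;\ge\;\sum_i\inf_{q}\{\tilde\Fval_i(q)-\dual^{\star\top}q\}.
\]
For restricted agents, the infimum is attained at $q_i^\star$ by the saddle-point property (Section~S2). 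For an excluded bid,
\[
\inf_{0\le x\le\bar x_\edge}(c_\edge-\dual^{\star\top}a_\edge)x=0
\]
because its reduced cost is nonnegative, and this value is attained at $x=0$. Summing over all agents and using $\sum_i q_i^\star=0$, the lower bound equals $\Qval(R^r)$. Hence the restricted solution is optimal for the master containing all of $\mathcal C_{\mathrm{elig}}$.

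I expect the main obstacle to be the bookkeeping in the second part rather than this duality argument. One must verify that the algorithm's stop branches cannot fire while some eligible column is still unrevealed or was screened at a stale price. If this needs strengthening, I would argue by contradiction on the last coordination call and appeal to the exhaustive-queue hypothesis.
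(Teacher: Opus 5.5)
Your proposal is correct and follows essentially the same route as the paper's sketch. Finiteness comes from each engagement removing one record and each provider yielding at most one bid. Optimality is a column-generation certificate: at the terminal exact restricted price, each excluded box-constrained column contributes $\inf_{0\le x\le\bar x_\edge}\rc^r_\edge(\dual)\,x=0$ to the Lagrangian dual, so weak duality gives the lower bound and optionality (uptake $x=0$) gives the upper bound. Your explicit check that stopping happens only after repricing at the current exact round price usefully spells out bookkeeping the paper leaves implicit.
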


The proof, a finite column-generation argument, is given in
Section~S3.

Outside these exact conditions, a tolerance-met exchange-ADMM iterate
satisfies clearing only up to the residual tolerance, and an
iteration-capped iterate may not satisfy it at all; the adapter does not
return a certified feasible recovery. Reported operating costs are
therefore obtained by re-solving each selected admitted set exactly, with
no output fed back into contact or admission, so that each corresponds to
a feasible plan.

The guarantee boundary is therefore narrow. Outside the exhaustive
conditions of \cref{prop:termination}, any stop with exact prices and
repository-first admission
leaves no known improving bid, so the remaining optimality gap is bounded
by the improvement available from eligible bids that have not been
revealed (Proposition~S3.2 in Section~S3). Under a
binding engagement budget, incomplete matching, or finite-iteration
coordination, no optimality conclusion follows, and because unrevealed
bids are unknown in an open world, no global-optimality certificate is
claimed.

\section{Computational Study}\label{sec:experiments}

The computational study evaluates the mechanism of \cref{sec:method}
through linked controlled comparisons rather than one all-encompassing
factorial grid. The same frozen scenario--seed episodes connect the
studies, and each comparison changes only the information or decision rule
required for its question.

\subsection{Experimental design}
\label{subsec:exp-testbed}
\label{subsec:exp-design}

\paragraph{Physical market, networks, and disruptions.}
The testbed is a controlled Physical-Internet capacity market. The key of
\cref{subsec:commodities} instantiates the service location as a service
cell or a directed lane and the time window as a response day
(Section~S4.3). A capacity commodity is therefore one combination of
resource family, service cell or lane, response day, and quality class;
for example, pharma-certified processing in $c_5$ on day 2 is distinct
from ambient processing or processing at another location. In normal
operation, each independently operated incumbent covers its committed
demand, and a localized disruption then makes one or more commodities
scarce.

The incumbent network contains warehouses (W), fulfillment hubs (H), and
cross-docks (X). Warehouses pool storage positions and handling hours, hubs
pool certified processing hours, and cross-docks pool dock hours and lane
fleet-hours. Pooling is subject to verified commodity participation,
geographic reach, quality class, and lane match. Section~S4.3 gives
the quality-class nesting rules.

The regional reference network, shown in \cref{fig:testbed-web}, has 15
firms (6 W, 5 H, 4 X) on a $2\times3$ grid of 25\,km cells. The figure
marks the three gates that decide which incumbent resources can
substitute for one another. Geographic reach is 30\,km, drawn around
$c_2$, so orthogonal neighbours are reachable and diagonals are not.
Certification limits pooling to facilities that participate in the
required quality class, as in the pharma-processing pool between hubs H5
and H4. Lane capacity pools only on an exact directed lane, such as the
lane from $c_1$ to $c_2$ that cross-docks X1 and X2 operate jointly; gray
dots are lane endpoints without a cross-dock.

\begin{figure}[!htbp]
\centering
\includegraphics[width=0.62\textwidth]{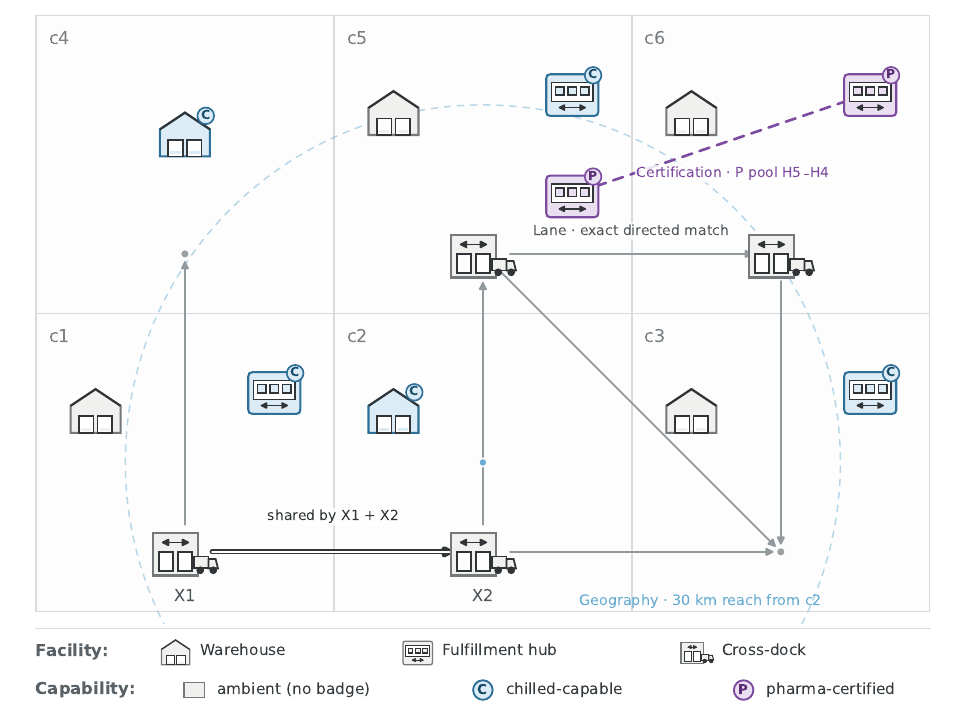}
\caption{Regional incumbent network used in the economic studies, with
its geographic, certification, and lane gates annotated. External
providers and disruptions are not shown.}
\label{fig:testbed-web}
\end{figure}

The regional network covers the first three days of disruption response
with 117 capacity commodities and 250 external providers. It supplies a
validation anchor and all main economic studies, whose operating costs are
reported in US dollars per scenario--seed episode. Five disruptions overlay
it: a two-cell demand surge, a chilled-warehouse outage, a pharmaceutical
processing-certification loss, a lane cut with an arrival surge, and a
compound of the first and third (Section~S4.1). A small
validation network with 6 firms, 20 capacity commodities, and 80 external
providers supplies dedicated numerical-parity and finite-universe
admission tests (Section~S4.3).

\paragraph{Provider market and cards.}
Each external provider is associated with one existing capacity commodity
and, as in \cref{ass:fixed-offer}, holds at most one offer for a single
key, fixed before disruptions are generated. Fixed, non-strategic offers
isolate information acquisition from bargaining. The provider's broad
capability, the exact commodity currently offered, and the offer's binding
terms are separate objects. To separate economic suitability from textual
salience, the 250-provider reference market mixes competitively priced
eligible providers, eligible providers whose offers are too expensive to
improve the coordinated solution, ineligible distractors described in
relevant capability language, and useful providers described tersely
(Section~S4.3).

Before contact, the public directory contains a provider name,
self-declared family, coarse region, and one-line description, and a
capacity card (\cref{subsec:oracle}) may add capabilities, an indicative
quantity band, and commercial posture. A separate writer model renders
each card from a frozen disclosure record, stored as the card's
\emph{as-authored attributes}. The searched provider-style cards use
progressively more indirect language, calibrated on 26 public United
States logistics-provider descriptions (e.g.,
\citealp{progressivelogistics2026coldstorage}; Section~S6).
GPT-OSS-120B produces the one-time index.
The designs are controlled synthetic tests of the mechanism, not
estimates of field effect sizes or end-to-end economic scalability.

\paragraph{Benchmarks and information conditions.}
The economic studies compare three nested operating benchmarks.
\emph{Independent response} lets each firm use only its own capacity and
emergency recourse, with cost $\Qval^{\mathrm{none}}$ (Section~S4.3).
\emph{Fixed-network pooling} solves \eqref{eq:pooling} over the
incumbents, with cost $\Qval^{\mathrm{fixed}}$. \emph{Full-information open
pooling} adds every eligible external bid in the frozen universe as an
optional column from the outset, with cost $\Qval^{\mathrm{full\text{-}open}}$;
it is the optimum characterized in \cref{prop:termination} and serves as an
information benchmark, not an implementable policy. The open value of an
episode is
$V_{\mathrm{open}}=\Qval^{\mathrm{fixed}}-\Qval^{\mathrm{full\text{-}open}}$.
The base information condition is the hidden-key regime of
\cref{subsec:oracle}, and the registered-key regime is its counterfactual.

Exact centralized optimization supplies the primary economic trajectories
and the full-open denominator, and \cref{subsec:exp-complete} tests letting
finite-tolerance exchange-ADMM make these decisions instead. Settings are in Section~S4.3.

\paragraph{Controls, replication, and inference.}
Paired comparisons hold the instance, provider truth, frozen cards and
extracted attributes, offers, and verification and admission rules fixed
(Section~S4.3). The reported contact limits $K\in\{5,10,20,40,80\}$
are prefixes replayed from one complete policy trajectory, and the
endpoint is that trajectory's own stopping outcome, which need not attain
the full-information benchmark. One seed generates one incumbent web, one
provider market, and five correlated disruption episodes, so 40 seeds
give 200 reference-market episodes. The seed is the clustering unit for
95\% percentile-bootstrap intervals.

For a contact policy $h$ and contact limit $K$, the common search estimand
is the captured share of open value,
\begin{equation}
\label{eq:value-captured}
\operatorname{CapturedShare}_{h}(K)
=
\frac{\sum_i\left(
\Qval^{\mathrm{fixed}}_i-\Qval^{h}_i(K)\right)}
{\sum_i V_{\mathrm{open},i}},
\end{equation}
where the sums cover all 200 reference-market episodes. Summing numerators
and denominators before division weights episodes by available open value
and avoids unstable episode-level ratios.

\subsection{Mechanism validation}
\label{subsec:exp-complete}
This subsection checks that exchange-ADMM recovers the centralized
benchmark, that exact-price exhaustive admission attains the
full-information optimum, and that finite-tolerance coordination retains
the captured value of exact coordination.

\paragraph{Coordination accuracy and admission completeness.}
Across 12 frozen states, the terminal cost statistic of exchange-ADMM at
the scaled tolerance $\epsilon=10^{-4}$ of \eqref{eq:res-stop} lies within
$3.4\times10^{-4}$ of the centralized optimum of \eqref{eq:pooling}, and
its prices satisfy the recourse conditions of \cref{subsec:dual} at all 36
interior points and at 60 of 66 kinks (Section~S5.4).

\begin{figure}[!htbp]
\centering
\includegraphics[width=\textwidth]{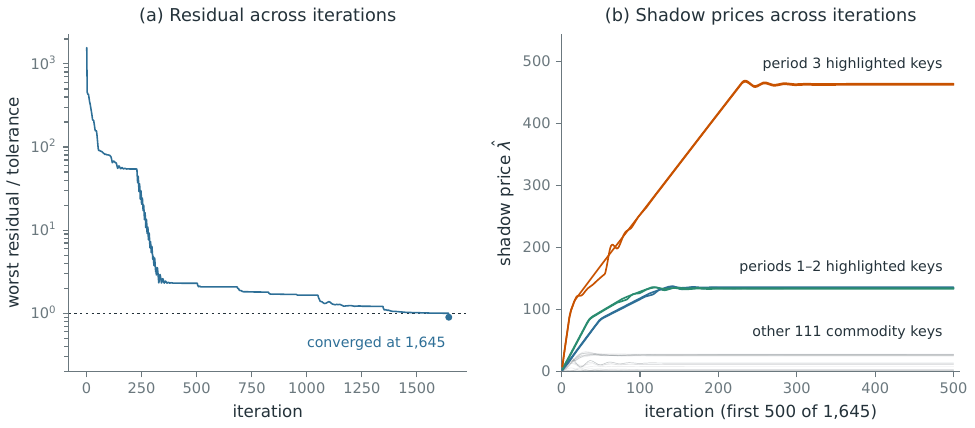}
\caption{Exchange-ADMM convergence for the regional certification-loss
state. Panel~(a) plots the larger tolerance-scaled residual of
\eqref{eq:res-stop}, and panel~(b) the first 500 iterations of all 117
shadow prices, with the six affected pharma-processing keys colored.}
\label{fig:admm-convergence}
\end{figure}

\Cref{fig:admm-convergence} shows one regional state: the six affected
pharma-processing prices settle after roughly 250 iterations, long before
the residual rule of \eqref{eq:res-stop} stops the run at iteration
1{,}645, so price stability and residual termination are evaluated
separately.

In the exhaustive admission check with exact prices, each provider is
engaged for its true frozen offer key, as if every offered key were
registered, which the hidden-key policy cannot do. In all nine states,
the terminal admitted set then matches the full-information optimum to a relative difference of at most
$3.972\times10^{-8}$, and every excluded eligible offer has a reduced cost
of at least $+1.036$. These checks support centralized solutions as
economic benchmarks and the exact finite-universe claim of
\cref{prop:termination}; they do not establish convergence of every
intermediate call or completeness under a contact budget or in the
hidden-key regime.

\paragraph{Finite-tolerance implementation.}
Warm exchange-ADMM at residual tolerance $10^{-4}$ then drives the same
hidden-key policy, with LLM-extracted attributes and normalized-gap need
selection, on all 200 episodes. Each selected admitted set is re-solved
exactly without feedback (\cref{subsec:stop}), so the comparison isolates
the decisions induced by finite coordination.

\begin{table}[!htbp]
\centering
\caption{Exact versus finite-ADMM decisions on the same 200 episodes.
Captured full-open value is in percent, and ADMM-selected admitted sets
are re-solved exactly. The difference carries a 95\% seed-cluster
interval; a bound of $-0.0$ is negative before rounding.}
\label{tab:strict-admm}
\footnotesize
\begin{tabular}{@{}lrrrr@{}}
\toprule
Contact limit & \makecell{Exact\\coordination} & \makecell{Finite\\ADMM} &
\makecell{ADMM $-$ exact\\(pp)} & \makecell{Same admitted\\set (\%)} \\
\midrule
$K=5$    & 9.0  & 8.6  & $-0.4\ [-0.9,\,-0.0]$ & 91.5 \\
$K=10$   & 14.1 & 13.2 & $-0.9\ [-2.0,\,-0.2]$ & 82.0 \\
$K=20$   & 21.3 & 20.3 & $-1.1\ [-1.8,\,-0.4]$ & 58.5 \\
$K=40$   & 32.0 & 30.1 & $-1.9\ [-5.2,\,+1.2]$ & 39.0 \\
$K=80$   & 42.5 & 43.2 & $+0.8\ [-2.8,\,+4.5]$ & 31.5 \\
Endpoint & 46.8 & 48.0 & $+1.2\ [-2.3,\,+4.7]$ & 29.0 \\
\bottomrule
\end{tabular}
\end{table}

Finite coordination costs little aggregate value. ADMM captures 0.4 to
1.1 points less value up to $K=20$, with intervals that exclude zero, and
the intervals from $K=40$ onward include zero. Similar value does not
mean identical decisions: the two paths admit the same set in 39.0\% of
episodes at $K=40$ and 29.0\% at the endpoint, mainly because small gap
differences reorder nearly tied need priorities
(Section~S5.4). The evidence therefore supports
robustness of aggregate captured value on this grid, not agreement of
individual decisions or a price-error certificate.

\subsection{The value of open capacity under disruption}
\label{subsec:exp-value}

This subsection asks how much operating value external capacity adds
beyond incumbent coordination, where that value comes from, and how
broadly it is distributed across episodes and providers. The coordination value
$G_{\mathrm{coord}}=\Qval^{\mathrm{none}}-\Qval^{\mathrm{fixed}}$
combines incumbent capacity pooling with systemwide allocation of the
shared cheap emergency tier and is therefore not a pure pooling effect.
The open value $V_{\mathrm{open}}$ isolates the additional operating-cost
reduction available when membership opens.

Across all 200 episodes, full-information opening reduces modeled
operating cost by $711\pm93$ US dollars per three-day episode (mean and
t-based 95\% confidence half-width; \cref{tab:exp1-ladder}). Open value
is 15.1\% of the total improvement from independent response to the
full-information benchmark, $G_{\mathrm{coord}}+V_{\mathrm{open}}$
(seed-cluster bootstrap 95\% interval 13.7\% to 16.5\%). It is net of
external offer payments but gross of reading, registration, engagement,
RFQ, and post-admission transaction costs; \cref{subsec:exp-netvalue}
deducts the first four.

To scale this dollar value, we divide it by the capacity-scarcity cost:
the component of fixed-membership disruption cost that remains after
removing the volume-related cost that would persist with abundant
incumbent capacity (Section~S5.1). On this denominator, open value equals 24.6\% of
modeled capacity-scarcity cost (22.7\% to 26.4\%), with scenario ratios
from 21.2\% to 25.7\%. The ratio is unchanged whether abundant capacity
scales incumbent capacity by 10, 100, or 1{,}000
(Section~S4.2).

Material open value appears in most episodes
(\cref{fig:prize-structure}a): 163 of 200 exceed 10\% of modeled
capacity-scarcity cost (81.5\%; 77.5\% to 85.5\%). Under certification
loss, open value is concentrated in episodes where pharmaceutical
processing remains short after incumbent pooling. When incumbents absorb
the loss, open value is close to zero, so the scenario mean is 217
dollars but the median only 47. In a thinner provider market generated
for the same seeds, with 40\% fewer competitive records, total open value
is 75\% of the reference value (95\% interval 68\% to 83\%), a decline
smaller than the 40\% drop in competitive records (Section~S5.3).

\begin{table}[!htbp]
\centering
\caption{Value decomposition over all 200 reference episodes. Dollar
columns report means with t-based 95\% half-widths over seeds. Open value
as a share of capacity-scarcity cost is the ratio of summed open value to
summed capacity-scarcity cost. The 10\%
threshold is descriptive and never selects the analysis sample.}
\label{tab:exp1-ladder}
\small
\setlength{\tabcolsep}{5pt}
\begin{tabular}{@{}lrrrr@{}}
\toprule
Scenario & \makecell{Coordination gain\\$G_{\mathrm{coord}}$ (US\$)} & \makecell{Open value\\$V_{\mathrm{open}}$ (US\$)} & \makecell{Open value /\\capacity-scarcity cost} & \makecell{Episodes above 10\%\\of capacity-scarcity cost} \\
\midrule
Regional surge (2 cells) & $5{,}583 \pm 460$ & $1{,}418 \pm 225$ & 25.7\% & 40/40 (100.0\%) \\
Chilled-warehouse outage & $1{,}087 \pm 85$ & $91 \pm 25$ & 21.7\% & 30/40 (75.0\%) \\
Certification loss (pharma) & $3{,}540 \pm 214$ & $217 \pm 95$ & 21.2\% & 19/40 (47.5\%) \\
Lane cut $+$ arrival surge & $741 \pm 55$ & $205 \pm 51$ & 21.2\% & 34/40 (85.0\%) \\
Compound (surge $+$ cert.\ loss) & $8{,}991 \pm 491$ & $1{,}627 \pm 227$ & 24.9\% & 40/40 (100.0\%) \\
\midrule
All episodes & $3{,}988 \pm 195$ & $711 \pm 93$ & 24.6\% & 163/200 (81.5\%) \\
\bottomrule
\end{tabular}
\end{table}

Service is a guardrail rather than the source of the gain. Same-period
service remains near 99.3\% in all three regimes, and the paired regime
differences in service and backlog all have intervals that include zero
(Section~S5.2).

The mechanism is capacity substitution. Relative to independent
response, fixed-network pooling shifts load toward incumbent resources
and cuts deep-tier recourse from 5{,}191 to 1{,}457 dollars per episode.
Opening membership partially reverses the incumbent pressure and lowers
deep-tier recourse further, to 421 dollars. This 1{,}036-dollar
gross reduction is approximately 71\% of aggregate fixed-pooling
deep-tier spending. After external offer payments and other operating
adjustments, it leaves the 711-dollar open value. The full regime
comparison and scenario detail are in Section~S5.2.

How many providers carry this value depends on the disruption. Along an
ex-post backward-elimination path from the providers used in each
full-open solution, one provider retains 31.3\% of attainable open value
and six retain 78.5\% (\cref{fig:prize-structure}b). After a chilled
outage or certification loss, one provider retains 74 to 83\%, whereas
after the regional surge and compound disruption it retains about 26\%.
The broad disruptions with the most prevalent open value therefore need
the most providers, which motivates the budgeted search study that
follows (Section~S5.3).

\begin{figure}[!htbp]
\centering
\includegraphics[width=\textwidth]{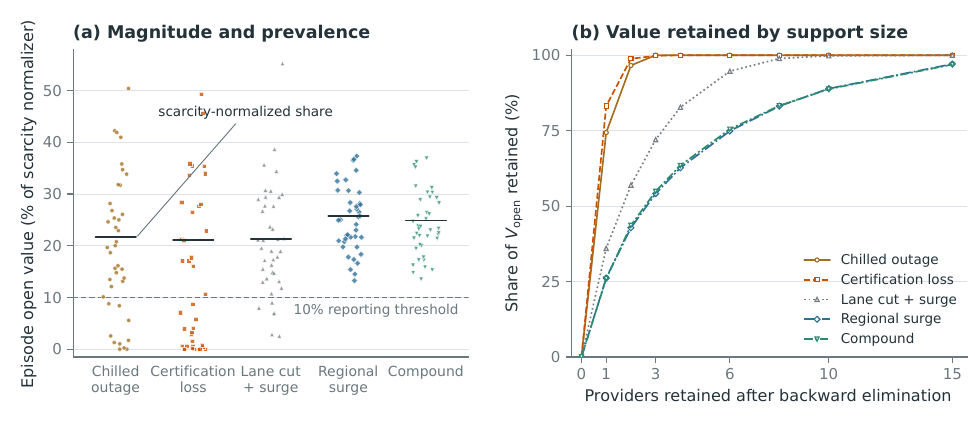}
\caption{Open value across episodes and providers. Panel (a) shows
each episode's open value as a share of capacity-scarcity cost, with the
10\% reporting threshold dashed and scenario ratios of sums as black
ticks. Panel (b) shows the share of open value retained along the
backward-elimination path in the 192 positive-open-value episodes.}
\label{fig:prize-structure}
\end{figure}
\subsection{Coordination-directed semantic search at scale}
\label{subsec:exp-guidance}

Under a scarce engagement budget, the mechanism must identify which
unresolved capacity need to search, rank providers relevant to that need,
and preserve ranking quality as the public directory grows. We evaluate
these three links in sequence.

\paragraph{Coordination provides need direction.}
We first remove language-model error and isolate the directional
information supplied by coordination. Provider evidence is held fixed at
the as-authored attributes of simple cards rendered from the disclosure
records, read without error over the same 200 episodes. The main rule
selects the largest scale-normalized coordination gap, with
$\sigma_\kk=\max\{\bar\gap_\kk,10^{-2}\}$, the cheap-tier recourse
capacity in \eqref{eq:twotier}, fixed before provider search. At $K=40$,
it captures 40.6\% of full-open value, compared with 11.4\% under
undirected random engagement, a paired gain of 29.1 points (95\% interval
23.0 to 35.3; \cref{fig:contact-direction-composition}a). Scale
normalization is material across heterogeneous capacity units: raw-gap
direction captures 31.1\%, 9.5 points less (5.0 to 14.2). The result does
not rest on the particular scale or on the interleaving
(Table~S11), and shadow-price or public-schedule
weightings do not improve it (Section~S5.5).

\paragraph{LLM evidence ranks providers within the selected need.}
The second design restores the provider-style cards, their frozen
LLM-extracted attributes (\cref{subsec:oracle}), and hidden keys. It
crosses uniform versus normalized-gap need choice with random versus
LLM-evidence provider ordering over the same 200 episodes.

\begin{figure}[!htbp]
\centering
\includegraphics[width=\textwidth]{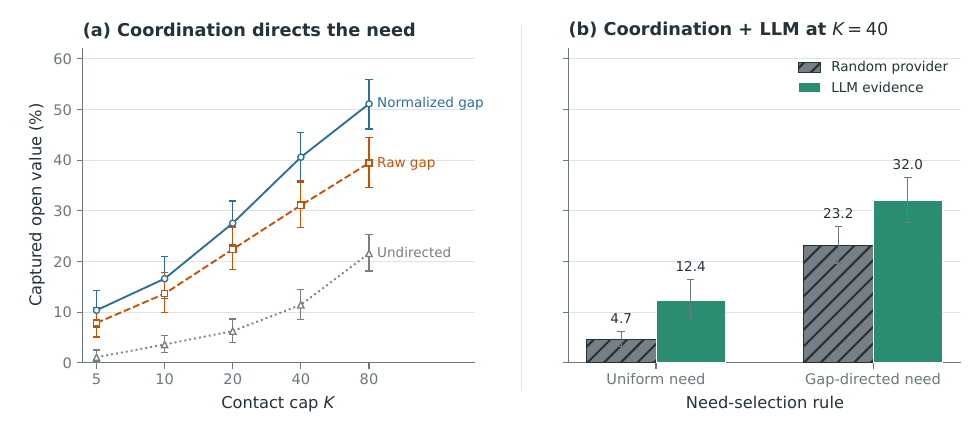}
\caption{Need direction and provider ranking. Panel~(a) uses error-free
provider evidence, and panel~(b) uses LLM-extracted attributes of
provider-style cards with hidden keys, so levels are not comparable across
panels. Whiskers are 95\% seed-cluster intervals.}
\label{fig:contact-direction-composition}
\end{figure}

\Cref{fig:contact-direction-composition}b shows the strict composition
at $K=40$. Uniform need choice with a random provider order captures
4.7\%; adding LLM evidence raises this to 12.4\%. Normalized-gap need
choice with a random provider order captures 23.2\%, and combining it
with LLM evidence reaches 32.0\%. Conditional on LLM ordering, gap
direction adds 19.6 points (95\% interval 14.0 to 25.2); conditional on
gap direction, LLM ordering adds 8.8 points (4.2 to 13.1). The $K=40$
factorial interaction is 1.1 points ($-4.6$ to $6.6$), so the evidence
supports two contributing pipeline components, not statistical
superadditivity. Paired effects at $K=10$, 40, and 80 are in
Table~S10.

\paragraph{Semantic retrieval at directory scale.}
We next test whether the same frozen representation can search a much larger public directory.
The retrieval-only benchmark adds other-seed records as background at
directory sizes from 250 to 10{,}000, while each target episode keeps
its original 250-provider economic set and full-open denominator.
Background records affect retrieval metrics and latency but never become
economic providers or bids. Each initial coordination brief returns the top
40 records under three representations of the same cards. \emph{BM25
keyword search} \citep{robertson2009probabilistic}, the standard term-weighting ranker of information
retrieval, scores each record by how often the brief's words appear in its
directory entry and raw card text. \emph{LLM-extracted
attributes} are the one-time index of \cref{subsec:oracle}. \emph{Perfect
extraction} searches the as-authored attributes of
\cref{subsec:exp-testbed}; it measures extraction loss and is not a
deployable upper bound.

\begin{figure}[!htbp]
\centering
\includegraphics[width=\textwidth]{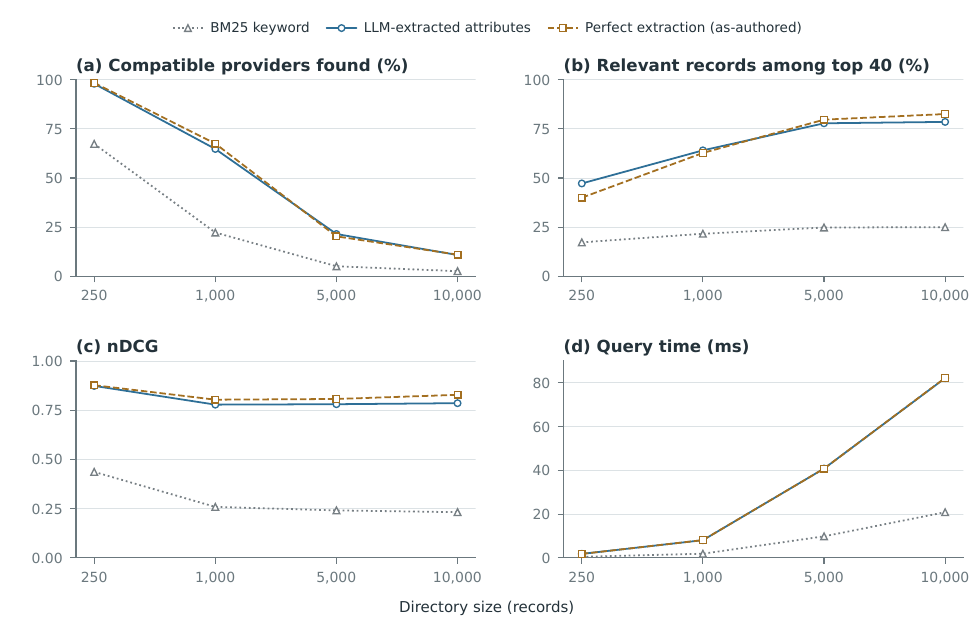}
\caption{Directory-scale retrieval at depth 40 as the public directory
grows from 250 to 10{,}000 records. Lines are unweighted means over
19{,}322 coordination briefs.}
\label{fig:retrieval-scale}
\end{figure}

\Cref{fig:retrieval-scale} shows that BM25 is weak from the start and
degrades further, while both semantic representations keep precision and
nDCG as the directory grows. At 10{,}000 records, LLM-extracted attributes
find 10.9\% of all compatible providers in the top 40, versus 2.7\% for
BM25, and reach an nDCG of 0.786 versus 0.232. Perfect extraction reaches
11.1\% recall and 0.828 nDCG, so the extraction loss is small relative to
the gap to BM25 (Section~S5.5). Because depth remains 40, recall
necessarily declines as the directory grows; the result is semantic
concentration rather than exhaustive discovery.

\subsection{Reading fidelity and the offer-identity bottleneck}
\label{subsec:exp-reading}

Coordination thus identifies what capacity to query, and LLM-extracted
attributes help determine which provider records to inspect first. Yet
the strongest implementable hidden-key combination captures only 32.0\% at
$K=40$ and 46.8\% at its implemented endpoint. This subsection asks
whether the remaining gap reflects reading error or information that
provider prose does not contain, most importantly the exact identity of
the standing offer. It holds the normalized-gap search rule, exact
coordination, frozen offers, verification, and admission fixed.

\paragraph{Reading fidelity.}
Panel~(a) of \cref{fig:information-bottleneck} compares four ways of making
public provider cards searchable. \emph{No card information} assigns
an empty record to every provider. \emph{Manual review} reads without error
the 40 cards whose directory entries best match the initial briefs. The
\emph{LLM reader}, GPT-OSS-120B, processes all 250 provider-style
cards once. \emph{Perfect extraction} instead uses the as-authored attributes
stored with those same cards (\cref{subsec:exp-guidance}). A \emph{complete
structured form}, which gives all 250 providers common explicit fields, is an
intake design rather than a reading of the same prose; its contrasts are
reported in Table~S12.

\begin{figure}[!htbp]
\centering
\includegraphics[width=\textwidth]{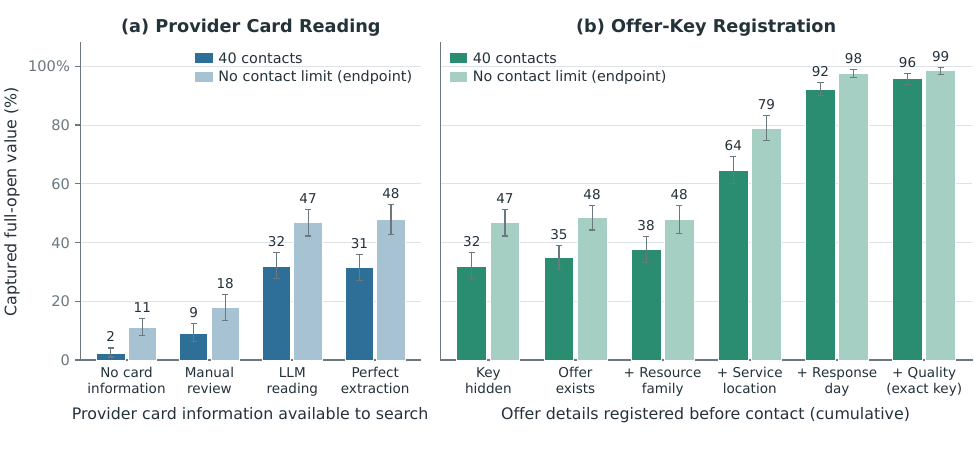}
\caption{Information access and the offer-identity bottleneck.
Panel~(a) varies the card information available to search with the offered
key hidden, and panel~(b) holds LLM search fixed and registers offer
details cumulatively. Whiskers are 95\% seed-cluster intervals over all 200
episodes.}
\label{fig:information-bottleneck}
\end{figure}

Panel~(a) shows that broad reading reach, rather than detected extraction
error, drives the main gain. At 40 contacts, captured value is 2.4\% with no
card information, 9.2\% with manual review of 40 cards, and 32.0\% when the
LLM reads all 250 cards. Relative to no card information, LLM reading adds
29.6 percentage points (95\% interval 24.9 to 34.3) at 40 contacts and 35.5
points (29.4 to 41.3) without a contact limit. Its differences from
perfect extraction and from a complete structured form
(Table~S12) are much smaller and do not support claims
that the LLM outperforms the registry or reads without error.

Extraction diagnostics are reported in Section~S6: the reader
reproduces every attribute of 500 all-explicit anchor cards, and 85.3\% of
fields agree across the frozen output and three fresh rereads, a
consistency check that was not propagated to contacts or operating value.

\paragraph{Offer identity.}
Panel~(b) of \cref{fig:information-bottleneck} isolates information that the
public card does not contain. It holds the same LLM search fixed and reveals
the provider-confirmed offered key one component at a time, from no
registration to the exact key. Making the exact key available raises
captured value from 32.0\% to 95.8\% at 40 contacts and from 46.8\% to
98.6\% at the endpoint (Table~S12). Both gains are measured under the base
one-offer, one-key, discard-on-mismatch protocol of \cref{subsec:rfq}: a
contact on a key that the standing offer does not cover fails verification
and discards the provider without revealing its offer. The intermediate rungs
of Table~S13 show that the service cell or lane and the
response day carry most of the gain, whereas confirming only that an offer
exists or its resource family adds little. Most of the remaining 1.4-point endpoint shortfall of exact
registration is a reading loss, analyzed in Section~S5.6.

The LLM makes provider prose searchable, and the provider-confirmed key
supplies information that prose does not contain.
\Cref{subsec:exp-netvalue} asks whether the recovered operating value
survives reading, registration, engagement, and RFQ costs.

\subsection{Fixed-cap acquisition-stage net value}
\label{subsec:exp-netvalue}

Let $i$ index a seed--scenario episode, $h$ a search system, and $K$ its
maximum number of attempted provider contacts. Deducting the staged
acquisition costs in \eqref{eq:search-cost} gives episode net value
\begin{equation}
\label{eq:total-cost}
\mathrm{NV}_{i,h}(K)
=\Qval^{\mathrm{fixed}}_i-\Qval^h_i(K)
-c_{\mathrm{read}}N_{\mathrm{read},h}
-c_{\mathrm{reg}}N_{\mathrm{reg},h}
-c_{\mathrm{eng}}N_{\mathrm{eng},i,h}(K)
-c_{\mathrm{rfq}}N_{\mathrm{rfq},i,h}(K).
\end{equation}
The first difference is operating savings through cap $K$ and already
includes the operating cost of admitted provider capacity. The count
$N_{\mathrm{eng},i,h}(K)$ includes failed verifications, whereas
$N_{\mathrm{rfq},i,h}(K)$ counts only contacts that reach quotation, and
reading and registration are charged once per episode. No search has net
value zero. The measure excludes post-admission onboarding and contracting
and is therefore acquisition-stage net value, not lifecycle profitability.

All conditions use the LLM search of \cref{subsec:exp-reading} with exact
centralized coordination, and the acquisition charge is
$b\in\{5,12,25\}$ dollars. Under \emph{hidden key, per attempt} and
\emph{registered key, per attempt}, $b$ applies to each attempted
engagement; under \emph{hidden key, per RFQ}, it applies only to contacts
reaching quotation. Registration costs 0.10 dollars per provider and
reading 0.0004 dollars per card. Sourcing actions carry real costs
\citep{mackay2022contract}, but these charges are illustrative
cost-incidence slices, not calibrated prices. Each condition--charge cell
selects its contact limit on seeds 1--20 and is evaluated on holdout
seeds 21--40, with intervals conditional on the selected limit
(Section~S5.7).

\begin{figure}[!htbp]
\centering
\includegraphics[width=\textwidth]{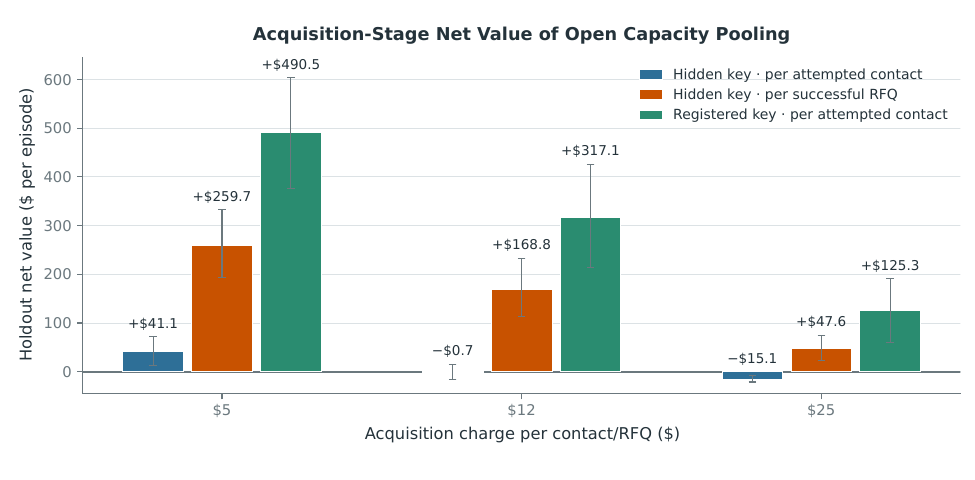}
\caption{Holdout acquisition-stage net value by acquisition charge. Each
bar uses its own training-selected contact limit, whiskers are 95\%
holdout seed-cluster intervals, and the zero line denotes no search.}
\label{fig:net-value}
\end{figure}

\paragraph{Acquisition friction and search depth.}
The primary economic result charges every attempted engagement while the
offered key remains hidden. The selected search depth then contracts as
friction rises. At \$5, \$12,
and \$25, respectively, the selected limits are $K=15$, 3, and 1, and the
holdout means (95\% intervals) are $41.1\ [13.3,\,72.0]$,
$-0.7\ [-15.2,\,15.6]$, and $-15.1\ [-21.1,\,-7.5]$ dollars per episode.
Only the \$5 interval lies wholly above no search; the \$12 interval includes
zero, and the \$25 interval lies wholly below zero.

\paragraph{Limiting the cost of failed contacts.}
Two sensitivities locate the per-attempt loss in failed contacts. The
first charges only contacts that pass verification and reach quotation.
It is a bound, not a realistic policy: outreach and verification still
consume effort, so a positive but lower cost for failed checks would place
net value between the two hidden-key cases, and
Section~S5.7 describes a process that would approach the
bound. Under this
bound the selected limits rise to $K=92$, 51, and 15, all three holdout
intervals lie above zero, and at \$25 mean net value rises from
$-15.1\ [-21.1,\,-7.5]$ to $47.6\ [22.3,\,75.0]$ dollars.

The second sensitivity keeps per-attempt charging but registers each
provider's offered key before search, which removes key-mismatch failures
instead of pricing them lower. After charging the upper tested
registration cost of \$0.10 for each of 250 providers, the selected limits
are $K=31$, 16, and 8, and all three intervals again lie above zero; at
\$25, mean net value is $125.3\ [59.8,\,190.5]$ dollars. Both are
synthetic design sensitivities, not field break-even estimates.

\paragraph{Controls and scope.}
Structured-form, manual-review, and finite-tolerance ADMM controls are
reported in Section~S5.7. Across these fixed-cap tests, hidden-key
search charged per attempt clears the no-search baseline only at \$5,
while offered-key registration keeps per-attempt net value positive at all
three tested charges. None of these results includes onboarding or
contracting costs, so they indicate acquisition-stage viability in the
synthetic market rather than field or lifecycle profitability.

\section{Discussion and Conclusion}\label{sec:conclusion}
\label{sec:discussion}

\subsection{Managerial implications and limitations}

\paragraph{Managerial implications.}
The mechanism targets settings in which external capacity is already
listed and contracted on request: on-demand warehousing and storage
marketplaces \citep{flexe2026platform}, contract manufacturing and
co-packing, and third-party transport and cold-chain services. In these
settings most building blocks already exist. Providers publish free-text
profiles, and firms verify credentials and request quotations before
contracting. The mechanism adds a coordination service among incumbents,
whose capacity gaps decide which providers to contact first.
Incumbents can adopt the mechanism without disclosing their private
models, because the coordination layer already produces the sourcing
signal. The results then suggest four rules. First, search outside the
network only where incumbent pooling leaves a material normalized gap.
When incumbents absorb a disruption, open value is close to zero
(\cref{subsec:exp-value}). Second, register offer keys where failed
contacts are costly. The resource family, service location, and response
day carry most of the value of registration, whereas a directory that
records only families adds little (\cref{subsec:exp-reading}). Third, set
the contact budget from the expected cost of failed contacts rather than
from a fixed number of providers, because the best contact limit falls
quickly as each failed attempt becomes more expensive
(\cref{subsec:exp-netvalue}). Fourth, monitor service as a separate
indicator. The value of opening comes from replacing emergency recourse,
and a cost objective alone neither guarantees nor reveals changes in
service. These rules rest on modeled operating value during the first
response days, not on a recovery trajectory or a long-run resilience
outcome.

\paragraph{Limitations and future research.}
The evidence is synthetic. The regional network, disruptions, and
operating costs are generated, provider cards are rendered from frozen
disclosures, and a small public sample calibrates only their wording. One
reader model, GPT-OSS-120B, builds the index, and the study covers the
first three response days rather than a recovery trajectory. Incumbents
follow the coordination protocol, quotes are fixed and non-strategic, and
each provider holds at most one standing offer for a single key. The
registration gains are specific to this one-offer, one-key,
discard-on-mismatch protocol; multiple offers, redirection, or retry at the
same provider remain untested and could shrink both the hidden-key loss
and the measured value of registration. Acquisition costs are illustrative,
the net-value analysis excludes onboarding, contracting, and ongoing
provider management, and verification is exact and delivery deterministic.
Future work should test the mechanism on field text and real provider
responses, model strategic quotation and multi-offer providers, allow noisy
verification and delivery failure, and admit providers as full private
exchange agents or in batches.

\subsection{Conclusion}

This paper makes network membership an operational decision in
disruption response while incumbent models remain local. The procedure
reaches the finite-universe optimum when every eligible bid is revealed at
exact prices; budgeted and finite-ADMM operation remain heuristic. In the
synthetic study, coordination gaps tell the network what to seek and LLM
reading makes provider prose searchable, while under its one-offer
protocol the largest remaining gain comes from knowing exactly what each
provider offers. In practice, making offers visible matters more than reading provider
prose more accurately. More broadly, optimization need not solve over a fixed
network: signals produced by solving can decide which network to solve
over next. As counterparties become agentic and register machine-readable
standing offers, the mechanism approaches its full-information benchmark.

\noindent\textbf{Data Availability Statement.}
The code, synthetic instances, cached language-model outputs, and result files
supporting this study are available from the corresponding author upon reasonable
request. The third-party provider descriptions used for calibration are identified
by their URLs in the supplementary materials.

\medskip
\noindent\textbf{Disclosure Statement.}
The authors report that there are no competing interests to declare.

\medskip
\noindent\textbf{Funding.}
This research received no specific grant from any funding agency.

\medskip
\noindent\textbf{Declaration of Generative AI Use.}
Language models are part of the method: GPT-OSS-120B
(\path{openai.gpt-oss-120b-1:0}) reads provider capacity cards and Amazon
Nova Pro (\path{amazon.nova-pro-v1:0}) writes the synthetic cards
(Section~S6); all prompts and outputs are retained. Claude Opus
5.5 and GPT 5.6 Sol were used to help with software development, language
improvement, and consistency checks. The authors have reviewed and edited
all content, have checked the terms of use of each tool, and assume full
responsibility for the accuracy, integrity, and originality of the work.

\medskip
\noindent\textbf{CRediT Author Statement.}
\textbf{Yujia Xu:} Conceptualization, Methodology, Investigation, Writing --
Original Draft. \textbf{Walid Klibi:} Conceptualization, Methodology,
Supervision, Writing -- Review \& Editing. \textbf{Benoit Montreuil:}
Conceptualization, Methodology, Supervision, Writing -- Review \&
Editing. All authors approved the final version and agree to be accountable
for all aspects of the work.

\bibliography{reference}

@article{montreuil2011physical,
  author={Montreuil, Benoit},
  title={Toward a Physical Internet: Meeting the Global Logistics Sustainability Grand Challenge},
  journal={Logistics Research},
  year={2011},
  volume={3},
  number={2--3},
  pages={71--87},
  doi={10.1007/s12159-011-0045-x}
}

@incollection{sohrabi2011open,
  author={Sohrabi, Helia and Montreuil, Benoit},
  title={From Private Supply Networks and Shared Supply Webs to {Physical Internet} Enabled Open Supply Webs},
  booktitle={Adaptation and Value Creating Collaborative Networks},
  publisher={Springer},
  year={2011},
  pages={235--244},
  doi={10.1007/978-3-642-23330-2_26}
}

@article{faugere2022dynamic,
  author={Faug{\`e}re, Louis and Klibi, Walid and White III, Chelsea and Montreuil, Benoit},
  title={Dynamic Pooled Capacity Deployment for Urban Parcel Logistics},
  journal={European Journal of Operational Research},
  year={2022},
  volume={303},
  number={2},
  pages={650--667},
  doi={10.1016/j.ejor.2022.02.051}
}

@article{liu2025dynamic,
  author={Liu, Xiaoyue and Li, Jingze and Dahan, Mathieu and Montreuil, Benoit},
  title={Dynamic Hub Capacity Planning in Hyperconnected Relay Transportation Networks Under Uncertainty},
  journal={Transportation Research Part E: Logistics and Transportation Review},
  year={2025},
  volume={194},
  pages={103940},
  doi={10.1016/j.tre.2024.103940}
}

@inproceedings{xu2024network,
  author={Xu, Yujia and Liu, Xiaoyue and Chen, Guanlin and Klibi, Walid and Thomas, Valerie M. and Montreuil, Benoit},
  title={Network Deployment of Battery Swapping and Charging Stations Within Hyperconnected Logistic Hub Networks},
  booktitle={10th International Physical Internet Conference (IPIC 2024)},
  year={2024},
  month={May},
  address={Savannah, GA, USA},
  url={https://repository.gatech.edu/entities/publication/bae36050-61ca-4b78-a190-08a8b65bb8b5/full}
}

@article{kim2022inventory,
  author={Kim, Nayeon and Montreuil, Benoit and Klibi, Walid},
  title={Inventory Availability Commitment Under Uncertainty in a Dropshipping Supply Chain},
  journal={European Journal of Operational Research},
  year={2022},
  volume={302},
  number={3},
  pages={1155--1174},
  doi={10.1016/j.ejor.2022.02.007}
}

@article{xu2026dynamic,
  author={Xu, Yujia and Klibi, Walid and Montreuil, Benoit},
  title={Dynamic Deployment of Pooled Human-Robot Resources in Urban Parcel Logistics},
  journal={Transportation Research Part B: Methodological},
  year={2026},
  volume={205},
  pages={103409},
  doi={10.1016/j.trb.2026.103409}
}

@book{boyd2004convex,
  author={Boyd, Stephen and Vandenberghe, Lieven},
  title={Convex Optimization},
  publisher={Cambridge University Press},
  address={Cambridge, UK},
  year={2004},
  doi={10.1017/CBO9780511804441}
}

@article{boyd2011admm,
  author={Boyd, Stephen and Parikh, Neal and Chu, Eric and Peleato, Borja and Eckstein, Jonathan},
  title={Distributed Optimization and Statistical Learning via the Alternating Direction Method of Multipliers},
  journal={Foundations and Trends in Machine Learning},
  year={2011},
  volume={3},
  number={1},
  pages={1--122},
  doi={10.1561/2200000016}
}

@article{aybat2019distributed,
  title={A distributed {ADMM}-like method for resource sharing over time-varying networks},
  author={Aybat, Necdet Serhat and Yazdandoost Hamedani, Erfan},
  journal={SIAM Journal on Optimization},
  volume={29},
  number={4},
  pages={3036--3068},
  year={2019},
  doi={10.1137/17M1151973}
}

@article{nedic2009distributed,
  author={Nedi{\'c}, Angelia and Ozdaglar, Asuman},
  title={Distributed Subgradient Methods for Multi-Agent Optimization},
  journal={IEEE Transactions on Automatic Control},
  year={2009},
  volume={54},
  number={1},
  pages={48--61},
  doi={10.1109/TAC.2008.2009515}
}

@article{maggiar2026consensus,
  author={Maggiar, Alvaro and Dicker, Lee and Mahoney, Michael W.},
  title={Consensus Planning with Primal, Dual, and Proximal Agents},
  journal={INFORMS Journal on Optimization},
  year={2026},
  volume={8},
  number={2},
  pages={95--119},
  doi={10.1287/ijoo.2024.0054},
  eprint={2408.16462},
  archiveprefix={arXiv}
}

@article{song2026llmscm,
  author={Song, Zhe and Xie, Ying and Yang, Lichao and Zhao, Yifan},
  title={Large Language Models in Supply Chain Management: A Systematic Literature Review and Application Framework},
  journal={International Journal of Production Research},
  year={2026},
  volume={64},
  number={16},
  pages={7144--7184},
  doi={10.1080/00207543.2026.2641103}
}

@article{li2023large,
  title={Large language models for supply chain optimization},
  author={Li, Beibin and Mellou, Konstantina and Zhang, Bo and Pathuri, Jeevan and Menache, Ishai},
  journal={arXiv preprint arXiv:2307.03875},
  year={2023}
}

@article{quan2025leveraging,
  title={Leveraging large language models for risk assessment in hyperconnected logistic hub network deployment},
  author={Quan, Yinzhu and Xu, Yujia and Chen, Guanlin and Benaben, Frederick and Montreuil, Benoit},
  journal={arXiv preprint arXiv:2503.21115},
  year={2025}
}

@article{jannelli2024agentic,
  author  = {Valeria Jannelli and Stefan Sch{\"o}pf and Matthias Bickel and
             Torbj{\o}rn Netland and Alexandra Brintrup},
  title   = {Agentic {LLM}s in the Supply Chain: Towards Autonomous
             Multi-Agent Consensus-Seeking},
  journal = {International Journal of Production Research},
  year    = {2025},
  doi     = {10.1080/00207543.2025.2604311}
}

@misc{xu2026optiloop,
  author={Xu, Yujia and Wang, Zhiheng and Dinh, Thi},
  title={{OptiLoop}: Coordination-in-the-Loop Verification and Repair for {LLM}-Generated Optimization Agents},
  year={2026},
  eprint={2605.27630},
  archiveprefix={arXiv},
  primaryclass={cs.AI},
  url={https://arxiv.org/abs/2605.27630}
}

@article{huang2025orlm,
  author={Huang, Chenyu and Tang, Zhengyang and Hu, Shixi and Jiang, Ruoqing and Zheng, Xin and Ge, Dongdong and Wang, Benyou and Wang, Zizhuo},
  title={{ORLM}: A Customizable Framework in Training Large Models for Automated Optimization Modeling},
  journal={Operations Research},
  year={2025},
  volume={73},
  number={6},
  pages={2986--3009},
  doi={10.1287/opre.2024.1233}
}

@article{joshi2026vendor,
  author={Joshi, Amey and Singh, Mangal and Parashar, Deepak and Singh, Mahesh},
  title={A Multi-Agent Large Language Model Framework for Intelligent Vendor Evaluation and Risk-Aware Procurement Decisions},
  journal={Scientific Reports},
  year={2026},
  volume={16},
  number={1},
  pages={19826},
  doi={10.1038/s41598-026-50952-x}
}

@article{dantzig1960decomposition,
  title={Decomposition principle for linear programs},
  author={Dantzig, George B and Wolfe, Philip},
  journal={Operations Research},
  volume={8},
  number={1},
  pages={101--111},
  year={1960},
  doi={10.1287/opre.8.1.101}
}

@article{lubbecke2005column,
  title={Selected topics in column generation},
  author={L{\"u}bbecke, Marco E and Desrosiers, Jacques},
  journal={Operations Research},
  volume={53},
  number={6},
  pages={1007--1023},
  year={2005},
  doi={10.1287/opre.1050.0234}
}

@article{barnhart1998branchprice,
  title={Branch-and-price: Column generation for solving huge integer programs},
  author={Barnhart, Cynthia and Johnson, Ellis L and Nemhauser, George L and Savelsbergh, Martin W P and Vance, Pamela H},
  journal={Operations Research},
  volume={46},
  number={3},
  pages={316--329},
  year={1998},
  doi={10.1287/opre.46.3.316}
}

@misc{keim2025mlcg,
  author={Keim, Felipe and Bucarey, V{\'i}ctor and Zhang, Qi and Flores-Quiroz, Angela},
  title={Machine Learning--Enhanced Column Generation for Large-Scale Capacity Planning Problems},
  year={2025},
  note={Optimization Online preprint},
  url={https://optimization-online.org/2025/12/machine-learning-enhanced-column-generation-for-large-scale-capacity-planning-problems/}
}

@article{jordan1995chaining,
  author={Jordan, William C. and Graves, Stephen C.},
  title={Principles on the Benefits of Manufacturing Process Flexibility},
  journal={Management Science},
  year={1995},
  volume={41},
  number={4},
  pages={577--594},
  doi={10.1287/mnsc.41.4.577}
}

@article{ji2023hallucination,
  author={Ji, Ziwei and Lee, Nayeon and Frieske, Rita and Yu, Tiezheng and Su, Dan and Xu, Yan and Ishii, Etsuko and Bang, Ye Jin and Madotto, Andrea and Fung, Pascale},
  title={Survey of Hallucination in Natural Language Generation},
  journal={ACM Computing Surveys},
  volume={55},
  number={12},
  pages={1--38},
  year={2023}
}

@misc{flexe2026platform,
  author={{Flexe, Inc.}},
  title={North America's Largest Flexible Warehouse Network},
  year={2026},
  howpublished={\url{https://flexe.com}},
  note={Accessed August 2026}
}

@article{guo2018capacity,
  author  = {Guo, Liang and Wu, Xiaole},
  title   = {Capacity Sharing Between Competitors},
  journal = {Management Science},
  volume  = {64},
  number  = {8},
  pages   = {3554--3573},
  year    = {2018},
  doi     = {10.1287/mnsc.2017.2796}
}

@article{ceschia2023ondemand,
  author  = {Ceschia, Sara and Gansterer, Margaretha and
             Mancini, Simona and Meneghetti, Antonella},
  title   = {The On-Demand Warehousing Problem},
  journal = {International Journal of Production Research},
  volume  = {61},
  number  = {10},
  pages   = {3152--3170},
  year    = {2023},
  doi     = {10.1080/00207543.2022.2078249}
}

@techreport{unctad2024maritime,
  author      = {{United Nations Conference on Trade and Development}},
  title       = {Review of Maritime Transport 2024:
                 Navigating Maritime Chokepoints},
  institution = {United Nations},
  number      = {UNCTAD/RMT/2024},
  address     = {Geneva},
  year        = {2024},
  url         = {https://unctad.org/publication/review-maritime-transport-2024},
  note        = {Accessed August 30, 2026}
}

@article{mackay2022contract,
  author={MacKay, Alexander},
  title={Contract Duration and the Costs of Market Transactions},
  journal={American Economic Journal: Microeconomics},
  year={2022},
  volume={14},
  number={3},
  pages={164--212},
  doi={10.1257/mic.20200128}
}

@misc{progressivelogistics2026coldstorage,
  author={{Progressive Logistics}},
  title={Cold Storage {3PL} Warehouse},
  year={2026},
  howpublished={\url{https://www.progressivelogistics.com/cold-storage-3pl-warehouse}},
  note={Accessed September 13, 2026}
}

@misc{qpi2026copacking,
  author={{Quality Packaging, Inc.}},
  title={Co-Packing},
  year={2026},
  howpublished={\url{https://www.qpack.com/contract-product-solutions/co-packing}},
  note={Accessed September 13, 2026}
}

@misc{manntrans2026services,
  author={{Mann Trans}},
  title={Refrigerated Transportation and Cross-Docking Services},
  year={2026},
  howpublished={\url{https://manntrans.com/}},
  note={Accessed September 13, 2026}
}

@article{tomlin2006value,
  author={Tomlin, Brian},
  title={On the Value of Mitigation and Contingency Strategies for Managing Supply Chain Disruption Risks},
  journal={Management Science},
  year={2006},
  volume={52},
  number={5},
  pages={639--657},
  doi={10.1287/mnsc.1060.0515}
}

@article{wang2010mitigating,
  author={Wang, Yimin and Gilland, Wendell and Tomlin, Brian},
  title={Mitigating Supply Risk: Dual Sourcing or Process Improvement?},
  journal={Manufacturing \& Service Operations Management},
  year={2010},
  volume={12},
  number={3},
  pages={489--510},
  doi={10.1287/msom.1090.0279}
}

@article{ravindran2010risk,
  author={Ravindran, A. Ravi and Bilsel, R. Ufuk and Wadhwa, Vijay and Yang, Tao},
  title={Risk Adjusted Multicriteria Supplier Selection Models with Applications},
  journal={International Journal of Production Research},
  year={2010},
  volume={48},
  number={2},
  pages={405--424},
  doi={10.1080/00207540903174940}
}

@article{sawik2013integrated,
  author={Sawik, Tadeusz},
  title={Integrated Selection of Suppliers and Scheduling of Customer Orders in the Presence of Supply Chain Disruption Risks},
  journal={International Journal of Production Research},
  year={2013},
  volume={51},
  number={23--24},
  pages={7006--7022},
  doi={10.1080/00207543.2013.852702}
}

@article{snyder2016ormsmodels,
  author={Snyder, Lawrence V. and Atan, Z{\"u}mb{\"u}l and Peng, Peng and Rong, Ying and Schmitt, Amanda J. and Sinsoysal, Burcu},
  title={{OR/MS} Models for Supply Chain Disruptions: A Review},
  journal={IIE Transactions},
  year={2016},
  volume={48},
  number={2},
  pages={89--109},
  doi={10.1080/0740817X.2015.1067735}
}

@article{ivanov2017literature,
  author={Ivanov, Dmitry and Dolgui, Alexandre and Sokolov, Boris and Ivanova, Marina},
  title={Literature Review on Disruption Recovery in the Supply Chain},
  journal={International Journal of Production Research},
  year={2017},
  volume={55},
  number={20},
  pages={6158--6174},
  doi={10.1080/00207543.2017.1330572}
}

@article{namdar2018supply,
  author={Namdar, Jafar and Li, Xueping and Sawhney, Rupy and Pradhan, Ninad},
  title={Supply Chain Resilience for Single and Multiple Sourcing in the Presence of Disruption Risks},
  journal={International Journal of Production Research},
  year={2018},
  volume={56},
  number={6},
  pages={2339--2360},
  doi={10.1080/00207543.2017.1370149}
}

@article{kamalahmadi2022impact,
  author={Kamalahmadi, Masoud and Shekarian, Mansoor and Mellat Parast, Mahour},
  title={The Impact of Flexibility and Redundancy on Improving Supply Chain Resilience to Disruptions},
  journal={International Journal of Production Research},
  year={2022},
  volume={60},
  number={6},
  pages={1992--2020},
  doi={10.1080/00207543.2021.1883759}
}

@article{yu2015capacity,
  author={Yu, Yimin and Benjaafar, Saif and Gerchak, Yigal},
  title={Capacity Sharing and Cost Allocation among Independent Firms with Congestion},
  journal={Production and Operations Management},
  year={2015},
  volume={24},
  number={8},
  pages={1285--1310},
  doi={10.1111/poms.12322}
}

@article{zhang2024capacity,
  author={Zhang, Xumei and Cao, Duanyang and Dan, Bin and Rui, Jianfeng and Zhang, Shengming},
  title={The Capacity Matching Problem of the Third-Party Shared Manufacturing Platform with Capacity Time Windows and Order Splitting},
  journal={International Journal of Production Research},
  year={2024},
  volume={62},
  number={17},
  pages={6167--6185},
  doi={10.1080/00207543.2024.2309638}
}

@article{yang2025integrated,
  author={Yang, Zihao and Chen, Lihua},
  title={An Integrated Response System for Demand Surges of Emergency Products: Backup Capacity Investment and Capacity Sharing Based on Product Flexibility},
  journal={International Journal of Production Research},
  year={2025},
  volume={63},
  number={15},
  pages={5812--5837},
  doi={10.1080/00207543.2025.2463003}
}

@article{yang2017mitigating,
  author={Yang, Yanyan and Pan, Shenle and Ballot, Eric},
  title={Mitigating Supply Chain Disruptions through Interconnected Logistics Services in the {Physical Internet}},
  journal={International Journal of Production Research},
  year={2017},
  volume={55},
  number={14},
  pages={3970--3983},
  doi={10.1080/00207543.2016.1223379}
}

@article{pan2017physical,
  author={Pan, Shenle and Ballot, Eric and Huang, George Q. and Montreuil, Benoit},
  title={{Physical Internet} and Interconnected Logistics Services: Research and Applications},
  journal={International Journal of Production Research},
  year={2017},
  volume={55},
  number={9},
  pages={2603--2609},
  doi={10.1080/00207543.2017.1302620}
}

@article{stadtler2009framework,
  author={Stadtler, Hartmut},
  title={A Framework for Collaborative Planning and State-of-the-Art},
  journal={OR Spectrum},
  year={2009},
  volume={31},
  number={1},
  pages={5--30},
  doi={10.1007/s00291-007-0104-5}
}

@article{lee2007decentralized,
  author={Lee, Seokcheon and Kumara, Soundar},
  title={Decentralized Supply Chain Coordination through Auction Markets: Dynamic Lot-Sizing in Distribution Networks},
  journal={International Journal of Production Research},
  year={2007},
  volume={45},
  number={20},
  pages={4715--4733},
  doi={10.1080/00207540600844050}
}

@article{jackson2024generative,
  author={Jackson, Ilya and Ivanov, Dmitry and Dolgui, Alexandre and Namdar, Jafar},
  title={Generative Artificial Intelligence in Supply Chain and Operations Management: A Capability-Based Framework for Analysis and Implementation},
  journal={International Journal of Production Research},
  year={2024},
  volume={62},
  number={17},
  pages={6120--6145},
  doi={10.1080/00207543.2024.2309309}
}

@inproceedings{liu2024containerized,
  author={Liu, Xiaoyue and Xu, Yujia and Montreuil, Benoit},
  title={Dynamic Containerized Modular Capacity Planning and Resource Allocation in Hyperconnected Supply Chain Ecosystems},
  booktitle={10th International Physical Internet Conference (IPIC 2024)},
  year={2024},
  month={May},
  address={Savannah, GA, USA},
  doi={10.35090/gatech/11077},
  url={https://hdl.handle.net/1853/75426}
}

@article{robertson2009probabilistic,
  author={Robertson, Stephen and Zaragoza, Hugo},
  title={The Probabilistic Relevance Framework: {BM25} and Beyond},
  journal={Foundations and Trends in Information Retrieval},
  year={2009},
  volume={3},
  number={4},
  pages={333--389},
  doi={10.1561/1500000019}
}

\clearpage
\setcounter{section}{0}
\setcounter{table}{0}
\setcounter{figure}{0}
\setcounter{equation}{0}
\renewcommand{\thesection}{S\arabic{section}}
\renewcommand{\thetable}{S\arabic{table}}
\renewcommand{\thefigure}{S\arabic{figure}}
\renewcommand{\theequation}{S\arabic{equation}}
\renewcommand{\theHsection}{S\arabic{section}}
\renewcommand{\theHtable}{S\arabic{table}}
\renewcommand{\theHfigure}{S\arabic{figure}}
\renewcommand{\theHequation}{S\arabic{equation}}
\setcounter{algorithm}{0}
\renewcommand{\thealgorithm}{S\arabic{algorithm}}
\providecommand{\theHalgorithm}{}
\renewcommand{\theHalgorithm}{S\arabic{algorithm}}
\begin{center}
{\Large\bfseries Supplementary Material}\\[0.6em]
{\large Open Capacity Pooling in Agentic Supply Chains:\\
Coordination-Directed LLM Discovery and Distributed Re-optimization}\\[0.6em]
Yujia Xu, Walid Klibi, and Benoit Montreuil
\end{center}

Numbers with the prefix S refer to this Supplementary Material; section,
equation, figure, table, assumption, and proposition numbers without the
prefix refer to the main paper.

\section{Illustrative Private Firm Models}
\label{app:private-models}

This section gives three concrete instances of the private value
function (2): a warehouse or distribution center, a
fulfillment or processing hub, and a cross-dock or short-haul transport
operator. The examples show how heterogeneous internal operations map
to the same public net-injection interface without attempting to be
full application models. Throughout, $t$ indexes planning periods in a
firm's private model (not ADMM iterations), and $\kk$ indexes capacity
commodities as in Section~3.1.

\subsection{Common private-model interface}
\label{app:interface}

For each firm, let $\mathcal J_m$ index physical resource pools,
$\own_m^{\mathrm{phys}}$ their endowments, $u_m(y_m)$ the resources
consumed by private operations, and
$A_m\in\mathbb R_+^{|\mathcal J_m|\times|\Kset|}$ the mapping from public
commodity injections to those resources. The three examples below specialize
the common constraint
\begin{equation}
\label{eq:app-interface}
u_m(y_m)+A_m\inj_m\le\own_m^{\mathrm{phys}},
\end{equation}
which allows several service-location, period, or quality commodities to
draw on one physical pool. Partial minimization over each firm's private
variables and constraints gives the value function in (2);
only $\inj_m$ is exposed to coordination. The public commodity profile
remains the one defined in (1). A single signed injection
per commodity suffices here; if bilateral terms, asymmetric transaction
costs, or simultaneous gross lending and borrowing matter, separate
directional variables are required.

\subsection{Warehouse or distribution-center model}
\label{app:warehouse}

Let $\Pset_m$ be the products handled by firm $m$,
$\Tset=\{1,\dots,T\}$ the planning periods, and let
$\Kset_m^{\mathrm{stor}}(t)$ and $\Kset_m^{\mathrm{hand}}(t)$ collect
the storage and handling commodities of period $t$ in which the firm
participates: one commodity per cell within its service radius and per
quality class it can serve, so that several commodities draw on one
physical resource in each period. Private variables: inventory
$I_{mpt}\ge 0$, receiving quantity $a_{mpt}\ge 0$, fulfilled quantity
$b_{mpt}\ge 0$, and backlog $\beta_{mpt}\ge 0$ for each product
$p\in\Pset_m$ and period $t\in\Tset$. Inventory flow, demand, and
end-of-horizon stock follow
\begin{equation}
\label{eq:app-wh-flow}
I_{mp,t}=I_{mp,t-1}+a_{mpt}-b_{mpt},
\qquad
b_{mpt}+\beta_{mpt}=d_{mpt}+\beta_{mp,t-1},
\qquad
I_{mp,T}\ge I_{mp,0},
\end{equation}
so unmet demand is carried as backlog at a convex penalty rather than
dropped, and cycle stock is restored by the end of the horizon. The
last condition is the rolling-horizon convention; without it a demand
surge would be absorbed by silently draining inventory and would never
stress receiving or storage capacity. Capacity consumption is linear in
the operational flows:
\begin{equation}
\label{eq:app-wh-usage}
u^{\mathrm{stor}}_{mt}=\sum_{p\in\Pset_m} v_{mp}\,I_{mpt},
\qquad
u^{\mathrm{hand}}_{mt}=\sum_{p\in\Pset_m}
\big(h^{\mathrm{in}}_{mp}a_{mpt}+h^{\mathrm{out}}_{mp}b_{mpt}\big),
\end{equation}
where $v_{mp}$ is the storage space per unit of product $p$ and
$h^{\mathrm{in}}_{mp},h^{\mathrm{out}}_{mp}$ are handling hours per unit
received and shipped. Both resources meet the common interface
\eqref{eq:app-interface}, every commodity that draws on the same
physical resource entering the same constraint:
\begin{equation}
\label{eq:app-wh-link}
u^{\mathrm{stor}}_{mt}+\sum_{\kk\in\Kset_m^{\mathrm{stor}}(t)}\inj_{m,\kk}
\le\own^{\mathrm{stor}}_{mt},
\qquad
u^{\mathrm{hand}}_{mt}+\sum_{\kk\in\Kset_m^{\mathrm{hand}}(t)}\inj_{m,\kk}
\le\own^{\mathrm{hand}}_{mt}.
\end{equation}
A representative convex objective is
\begin{equation}
\label{eq:app-wh-cost}
C_m=\sum_{t}\Big[
\sum_p \big(c^{\mathrm{hold}}_{mp}I_{mpt}
+c^{\mathrm{in}}_{mp}a_{mpt}+c^{\mathrm{out}}_{mp}b_{mpt}
+c^{\beta}_{mp}\beta_{mpt}\big)
+\gamma_m\big(u^{\mathrm{hand}}_{mt}\big)^2
\Big]
+c^{\mathrm{tr}}_{m}\sum_{\kk\in\Kset_m}\delta_{m,\kk}\,
\lvert\inj_{m,\kk}\rvert,
\end{equation}
combining holding, handling, and backlog costs, a convex congestion
(overtime) term on handling load, and a shuttle charge on transactions
at other cells, where $\delta_{m,\kk}$ is the distance from the firm's
home cell to the commodity location $\ell_\kk$ and $c^{\mathrm{tr}}_m$
the cost per unit and kilometer. A home-cell transaction is free; a
remote one moves goods in one direction or the other whether the firm
lends or borrows, so the charge applies to the absolute injection. In
the abstract notation: $y_m=(I,a,b,\beta)$, the mapping $u_m(y_m)$ is
\eqref{eq:app-wh-usage}, the value function $\Fval_m(\inj_m;d_m)$ is
the partial minimum in (2) over these variables, and
the vector $\inj_m=(\inj_{m,\kk})_{\kk\in\Kset_m}$ is all the
coordination layer sees.

\subsection{Fulfillment or processing-hub model}
\label{app:hub}

Let $\Pset_m$ be order (task) types, $\Wset_m$ workstations or
processing resources, each certified at a quality class $\chi_w$, and
$\Tset$ planning periods; quality classes matter here because certified
work (e.g.\ pharma packing) can run only on stations certified at that
class or above. Private variables: processed quantity $x_{mpwt}\ge 0$
of order type $p$ at workstation $w$ in period $t$, defined for
admissible task--station pairs only, backlog $\beta_{mpt}\ge 0$, and
overtime (flexible shift) capacity $0\le o_{mwt}\le\bar{o}_{mw}$. Order
completion balances demand:
\begin{equation}
\label{eq:app-hub-flow}
\sum_{w\in\Wset_m} x_{mpwt}+\beta_{mpt}
= d_{mpt}+\beta_{mp,t-1}.
\end{equation}
Workstation capacity consumption is
\begin{equation}
\label{eq:app-hub-usage}
u_{mwt}=\sum_{p\in\Pset_m} a_{mpw}\,x_{mpwt},
\end{equation}
with $a_{mpw}$ the station hours per unit of task $p$ on station $w$.
A processing commodity carries no station index: by (1)
its key names a cell $\ell_\kk$, a window $\omega_\kk$, and a quality
class $\chi_\kk$, so the stations of one class form a single pooled
resource. Writing $\Kset_m^{\mathrm{proc}}(t,q)$ for the period-$t$,
class-$q$ processing commodities in which the hub participates, the
interface aggregates same-class stations:
\begin{equation}
\label{eq:app-hub-link}
\sum_{w:\,\chi_w=q} u_{mwt}
+\sum_{\kk\in\Kset_m^{\mathrm{proc}}(t,q)}\inj_{m,\kk}
\;\le\;\sum_{w:\,\chi_w=q}\big(\own_{mwt}+o_{mwt}\big),
\qquad q\in\{\chi_w : w\in\Wset_m\}.
\end{equation}
A hub participates in a class-$q$ processing commodity only if it operates
a class-$q$ station, so every processing injection enters exactly one
constraint of \eqref{eq:app-hub-link}. Higher-class stations can process
lower-class tasks through the admissible pairs of \eqref{eq:app-hub-usage},
but this substitution stays within the hub's private allocation and does
not extend to pooled injections. Pooling higher-class station hours into a
lower-class commodity would require injection-to-station allocation
variables; under the participation condition above, that nested
formulation has the same feasible set as \eqref{eq:app-hub-link} in every
generated instance.

Overtime $o_{mwt}$ is part of the firm's own flexible capacity, paid
for privately at a convex cost and not pooled, so it enlarges the
right-hand side rather than entering $\inj_m$. A representative convex
objective is
\begin{equation}
\label{eq:app-hub-cost}
C_m=\sum_{t}\Big[
\sum_{p,w} c^{\mathrm{proc}}_{mpw}x_{mpwt}
+\sum_{w}\big(c^{o}_{mw}o_{mwt}+\gamma_{mw}o_{mwt}^2\big)
+\sum_{p} c^{\beta}_{mp}\beta_{mpt}
\Big]
+c^{\mathrm{tr}}_{m}\sum_{\kk\in\Kset_m}\delta_{m,\kk}\,
\lvert\inj_{m,\kk}\rvert,
\end{equation}
combining processing, convex overtime, and backlog/delay costs with the
distance-based task-transfer charge of \cref{app:warehouse} on remote
transactions. The hub's detailed task-allocation model, which order
runs on which station, when, and with how much overtime, is summarized
to the coordinator through $\inj_m$ only.

\subsection{Cross-dock or short-haul transport model}
\label{app:crossdock}

Let $\Pset_m$ be shipment classes, $\Lset_m$ short-haul lanes, and
$\Tset$ planning periods. For transportation commodities the location
attribute is explicitly an origin--destination pair,
$\ell_\kk=(o_\kk,d_\kk)$. Private variables: shipment flow
$x_{mplt}\ge 0$ of class $p$ on lane $l$ in period $t$;
vehicle-equivalent capacity allocation $n_{mlt}\ge 0$ (continuous fleet
hours rather than an integer vehicle count, to preserve convexity);
and temporary cross-dock inventory $w_{mpt}\ge 0$ awaiting transfer.
Shipment flow balances inbound arrivals $g_{mpt}$ against outbound
dispatch and waiting:
\begin{equation}
\label{eq:app-cd-flow}
w_{mp,t}=w_{mp,t-1}+g_{mpt}-\sum_{l\in\Lset_m} x_{mplt}.
\end{equation}
Lane capacity limits flow by allocated fleet hours,
\begin{equation}
\label{eq:app-cd-lane}
\sum_{p\in\Pset_m} v_{mp}\,x_{mplt}\;\le\;Q_{ml}\,n_{mlt},
\end{equation}
with $v_{mp}$ the vehicle space per unit and $Q_{ml}$ the capacity per
fleet hour on lane $l$. Dock handling consumption is
\begin{equation}
\label{eq:app-cd-usage}
u^{\mathrm{dock}}_{mt}=\sum_{p,l} h_{mp}\,x_{mplt},
\end{equation}
Writing $\Kset_m^{\mathrm{dock}}(t)$ for the dock commodities of
period $t$ and $\Kset_m^{\mathrm{lane}}(l,t)$ for the commodities that
use lane $l$, the resource links are
\begin{equation}
\label{eq:app-cd-link}
u^{\mathrm{dock}}_{mt}
+\sum_{\kk\in\Kset_m^{\mathrm{dock}}(t)}\inj_{m,\kk}
\le\own^{\mathrm{dock}}_{mt},
\qquad
n_{mlt}
+\sum_{\kk\in\Kset_m^{\mathrm{lane}}(l,t)}\inj_{m,\kk}
\le\own^{\mathrm{lane}}_{mlt}.
\end{equation}
A representative convex objective is
\begin{equation}
\label{eq:app-cd-cost}
C_m=\sum_{t}\Big[
\sum_{l} c^{\mathrm{veh}}_{ml}n_{mlt}
+\sum_{p,l} c^{\mathrm{dock}}_{mp}x_{mplt}
+\sum_{p} c^{\mathrm{wait}}_{mp}w_{mpt}
+\gamma_m\big(u^{\mathrm{dock}}_{mt}\big)^2
\Big]
+\sum_{p} c^{\mathrm{unsh}}_{mp}\,w_{mp,T},
\end{equation}
combining fleet-hour, dock-handling, and waiting costs with a convex
congestion term and an end-of-horizon expediting charge on freight
still waiting at $T$. The charge stands in for a hard clearance
condition $w_{mp,T}=0$: when a lane cut removes capacity in a period
that has no pooled lane commodity, the hard form leaves the private
problem infeasible, whereas the penalized form keeps every firm model
feasible on its own, as backlog does for the warehouse and the hub.
Spare dock hours and spare fleet hours on a lane are thus offered to
the pool as distinct commodities, one indexed by $(\mathrm{dock},t,q)$
and the other by the lane pair $(\mathrm{lane},l,t,q)$, while routing
and dock scheduling stay private.

\subsection{Convexity scope and operational extensions}
\label{app:scope}

The three examples are linear or convex quadratic programs.
Operational flows are continuous; overtime and congestion costs are
convex quadratics; distance and terminal charges are convex
piecewise-linear; and all capacity-consumption mappings are linear. The
constraints define closed convex polyhedra, with nonempty compact
operational slices for each feasible injection in these examples, so
the private minima are attained. Together with the closed convex costs,
these conditions yield closed, proper, convex induced functions
$\Fval_m(\cdot;d_m)$ as required by Assumption~3.1. They do not
imply differentiability of the system value function or uniqueness of
the shadow price (Section~3.5).

The feasibility of (3) required by
Assumption~3.1 follows from a no-export option. Each firm has a
finite-cost feasible injection $\inj_m^0\le0$, where the inequality is
componentwise: the firm may receive capacity but is not required to
lend any. Choosing one such injection for every firm and setting
$\gap=-\sum_m\inj_m^0\ge0$ gives a finite-cost feasible clearing point.

\section{Coordination Implementation Details}
\label{app:coordination}
\paragraph{Penalty scaling.}
Under the metric $D_\rho=\rho S^{-2}$ of (7), commodity $\kk$
receives quadratic weight $\rho_\kk=\rho/\sigma_\kk^2$ in original units.
The weights express each imbalance relative to its declared size, so that
a numerically large coordinate does not dominate the proximal term or the
residual test. They do not make commodities substitutable or alter the
economic objective and clearing constraints.

\paragraph{Stopping residuals.}
The residuals in (10) are evaluated in the declared
scaled coordinates,
\begin{align}
\label{eq:res-pri}
r_{\mathrm{pri}}^{\,t}
&=\sqrt{N_r}\,\big\lVert S^{-1}\bar q^{\,t}\big\rVert_2,\\
\label{eq:res-dual}
r_{\mathrm{dual}}^{\,t}
&=\rho\!\left(\sum_{i\in\Iset^r}\left\lVert
S^{-1}\!\left[(q_i^t-\bar q^{\,t})
-(q_i^{t-1}-\bar q^{\,t-1})\right]
\right\rVert_2^2\right)^{1/2},
\end{align}
so the stopping test uses the same coordinate scaling as the proximal
metric in (7). The penalty default is a numerical tuning
rule, not a claim that finite-iteration paths are invariant to the
choice of units.

\begin{proof}[Proof sketch of Proposition~3.3]
Set $\tilde q_i=S^{-1}q_i$ and
$\tilde w=S^{-1}\scaled$. Because $S$ is fixed and nonsingular, the
transformed functions
$\tilde{\Fval}_i(S\tilde q_i)$ retain the properties in
Assumption~3.1, and (7) becomes standard
scalar-penalty exchange ADMM
\citep[\S7.3.2]{boyd2011admm}. Its objective and residual convergence
follow from the standard two-block result under a saddle point. The
transformed multiplier is $\rho\tilde w^t$; mapping it to the original
constraint gives
$S^{-1}(\rho\tilde w^t)=\rho S^{-2}\scaled^t
=D_\rho\scaled^t$, which proves the multiplier statement.
\end{proof}
The price sign in Section~3.5 follows from our orientation:
$q_i$ is net supply, the negative of the net-receipt coordinate in
\citet[\S7.3]{boyd2011admm}, so their receipt price is $\dual$ and
$y=-\dual$.

\paragraph{Economic-price Lagrangian.}
For the complete round agent set, the economic-price Lagrangian is
\begin{equation}
\label{eq:lagrangian}
L_r(\{q_i\},\dual)
=\sum_{i\in\Iset^r}\tilde{\Fval}_i(q_i)
-\dual^{\!\top}\sum_{i\in\Iset^r}q_i .
\end{equation}
It separates across recourse, incumbents, and all admitted bids. At an
optimum, each agent minimizes
$\tilde{\Fval}_i(q_i)-\dual^{r,\star\top}q_i$; for an incumbent this is
$\Fval_m(\inj_m;d_m)-\dual^{r,\star\top}\inj_m$. This is a KKT
characterization of the round optimum, not the proximal update
(8) or a claim about a settlement scheme.

The injection $b$ in (13) must be accepted in full;
optional capacity with endogenous uptake is represented instead by
(5). Because the model has no free-disposal variable,
$V_r$ need not be nonincreasing in every direction.

\begin{proof}[Proof sketch of Proposition~3.4]
An optimal price gives the affine lower bound in (14)
through the saddle-point inequality for \eqref{eq:lagrangian}.
Conversely, for any $g\in\partial V_r(0)$, applying the subgradient
inequality at $b=-\sum_iq_i$ shows that the round optimum minimizes the
Lagrangian with economic price $-g$. Thus every subgradient corresponds
to an optimal price and the two sets are equal.
\end{proof}

\paragraph{State transfer.}
When admission adds bid $\edge$, the next call starts from
$\mathcal H_r$ in (11). The new bid starts at
$q_\edge^{r+1,0}=a_\edge x_\edge^{r+1,0}=0$, and the average
$\bar q^{\,r+1,0}$ is recomputed over the expanded agent set. In a
distributed realization the coordinator retains $(\scaled,\rho)$ and each
persistent agent retains its own row. The transfer is an initialization
rule, not a modification of the round optimization problem, and it is a
numerical device, not an economic discovery signal.

\paragraph{Reported terminal statistics.}
The reported recourse signal incorporates the implemented one-sided
residual adjustment, up to its numerical dust threshold,
\[
\hat\gap^r_\kk
=\left[q_{0,\kk}^{r,T_r}
-\min\left\{\sum_{i\in\Iset^r}q_{i,\kk}^{r,T_r},0\right\}\right]_+,
\]
whereas the stopping residuals in (10) are computed from
the raw terminal rows. The scalar $\widehat{\Qval}^{\,r}$ combines terminal
local-cost values with the adjusted recourse cost; it is not the certified
cost of a feasible recovered plan. A centralized evaluator can assemble
this statistic; a deployed injection-only protocol
would require an additional aggregate local-cost channel to observe it.

\section{Discovery and Admission Details}
\label{app:discovery}

This section collects supporting material for the discovery and
admission layer in Section~4, whose complete loop is
Algorithm~1. \Cref{tab:ledger} lists, for
each role, the information the role retains, the information that
crosses the mechanism interface, and the decisions the role can make
binding.

\begin{table}[!htbp]
\centering
\footnotesize
\setlength{\tabcolsep}{3.5pt}
\renewcommand{\arraystretch}{1.10}
\caption{Information interfaces and decision authority. Interface
information crosses a role boundary for the stated purpose; it is not
necessarily disclosed to every participant. The protocol makes no
cryptographic privacy claim.}
\label{tab:ledger}
\begin{tabularx}{\textwidth}{@{}
>{\raggedright\arraybackslash}p{0.16\textwidth}
>{\raggedright\arraybackslash}p{0.24\textwidth}
>{\raggedright\arraybackslash}p{0.27\textwidth}
>{\raggedright\arraybackslash}X@{}}
\toprule
\textbf{Role} & \textbf{Information retained} &
\textbf{Interface information} & \textbf{Binding authority} \\
\midrule
\multicolumn{4}{@{}l}{\textit{Panel A: coordination layer}} \\
\addlinespace[2pt]
Incumbent firm $m$
& demand $d_m$; endowment $\own_m^{\mathrm{phys}}$; objective $C_m$;
feasible set $\Xset_m$; decisions $y_m$
& participation keys $\Kset_m$; net injection $\inj_m^t$ each iteration
& selects its operations and injection through
(8) \\
\addlinespace[3pt]
Platform / coordinator
& platform-known recourse schedule $\Phi$; no firm-private model in a
distributed realization
& aggregate injection $\bar q^{\,t}$; state $\scaled^t$; terminal
status, reported gaps and prices; briefs, semantic index, and bid
repository
& runs coordination to tolerance or cap; applies need and contact
rules; screens revealed bids \\
\addlinespace[3pt]
Recourse agent
& no private information
& emergency-recourse response $\gap^t$
& performs the prescribed update (9) \\
\addlinespace[3pt]
Admitted bid agent $\edge$
& fixed verified terms $(a_\edge,c_\edge,\bar x_\edge)$
& bid injection $q_\edge=a_\edge x_\edge$
& performs its prescribed update; the exchange problem selects
$0\le x_\edge\le\bar x_\edge$ \\
\midrule
\multicolumn{4}{@{}l}{\textit{Panel B: discovery and admission layer}} \\
\addlinespace[2pt]
External provider
& true capabilities; internal costs; fixed willingness to serve
& public capacity card; optional provider-confirmed offered key; after
successful verification, submitted evidence and frozen quoted terms
& sets its offer before the response and may confirm its offered key;
the base model assumes return after verification and no subsequent
withdrawal or re-pricing \\
\addlinespace[3pt]
LLM semantic interface
& no binding operational state
& evidence-backed typed records forming a reusable semantic index
& none directly; output supports coordination-conditioned retrieval and
rule-based contact ordering \\
\addlinespace[3pt]
Verification and RFQ process
& submitted documents; confirmations; authoritative records
& provider-level verification outcome; after a successful check,
contractible terms $(a_\edge,c_\edge,\bar x_\edge)$
& verification binds eligibility; RFQ records, but does not set, the
provider's terms; neither determines admission \\
\bottomrule
\end{tabularx}
\end{table}

\paragraph{Contact scoring.}
This paragraph gives the scoring rule summarized in Section~4.2.
For an active need $\kk\in\Kset_+^r$, define the card-screenable
requirement set
\[
\mathcal J_\kk=\{\text{resource family},\text{location or lane},
\text{time window},\text{quality class}\}.
\]
These attributes support pre-contact screening only. Full eligibility
can additionally depend on exclusions, minimum commitments, or other
facts established during verification. The matcher compares $T_p$ with
need $\kk$ and assigns each $j\in\mathcal J_\kk$ a status
$r_{p\kk j}\in\{\mathrm{met},\mathrm{indirect},
\mathrm{unknown},\mathrm{not\mbox{-}met}\}$. Direct card evidence that
covers a requirement is met, inferred evidence that covers it is
indirect, and an unaddressed requirement is unknown. Under the base
hedged interpretation, a noncovering indirect statement is generally
unknown rather than an inferred exclusion; a rule that establishes an
incompatible location or lane can still return not-met.

The equal-weight evidence-coverage score is
\begin{equation}
\label{eq:pe}
\xi_{p\kk}
=\frac{1}{|\mathcal J_\kk|}
\sum_{j\in\mathcal J_\kk} z(r_{p\kk j})
\in[0,1],
\qquad
z(r)=
\begin{cases}
1, & r=\mathrm{met},\\
\theta, & r=\mathrm{indirect},\\
0, & r\in\{\mathrm{unknown},\mathrm{not\mbox{-}met}\},
\end{cases}
\end{equation}
where $\theta\in(0,1)$ is predeclared (\cref{tab:numerical-settings}). A not-met status removes the
provider--need pair before contact. An unknown status contributes zero
but, in the base clarification-at-contact configuration, does not remove
the pair; providers with stronger evidence are contacted first, and any
remaining uncertainty is resolved by verification. The score
$\xi_{p\kk}$ therefore only orders contacts; it is not a success
probability or an eligibility result.

When a commercial-posture field is enabled as a soft ordering input,
let $h_p=1$ for competitive, $h_p=-1$ for premium, and $h_p=0$ for
neutral or unstated language. The adjusted contact score is
\begin{equation}
\label{eq:posture-score}
\widetilde\xi_{p\kk}
=\xi_{p\kk}(1+\nu h_p),
\qquad 0\le\nu<1.
\end{equation}
This modifier changes contact order only and cannot reverse a not-met
status. After the need is selected by (18), the
contact policy chooses its remaining candidate with the largest
$\widetilde\xi_{p\kk}$. Scores within the declared numerical tie
tolerance, like need ties, are resolved by a pseudo-random order that is
fixed for each episode and shared by all policies, so provider identifiers
carry no information about provider type.

\paragraph{Exact effect of one admission.}
The following result formalizes the admission effect described in
Section~4.4. It uses the directional value of an injection.
For any injection direction $a$ with a finite right directional
derivative, (14) gives
\begin{equation}
\label{eq:directional-value}
-\inf_{\dual\in\Lambda_r^\star}\dual^\top a
=\sup_{g\in\partial V_r(0)}g^\top a
\;\le\;V_r'(0;a).
\end{equation}
Equality holds under the standard support identity, including when
$V_r$ is polyhedral or finite and continuous near zero. Under that
condition, the marginal cost reduction is
$\inf_{\dual\in\Lambda_r^\star}\dual^\top a$; otherwise an individual
price supplies the supporting bound (14), not necessarily
the directional derivative.

\begin{proposition}[Exact operating-value effect of an optional bid]
\label{prop:monotone}
Let $\edge\notin R^r$ satisfy Assumptions~4.1 and~4.2, and
suppose no fixed admission charge enters $\Qval$. Then
\begin{equation}
\label{eq:optional-bid-value}
\Qval(R^r\cup\{\edge\})
=\inf_{0\le x\le\bar x_\edge}
\{c_\edge x+V_r(a_\edge x)\}
\le V_r(0)=\Qval(R^r).
\end{equation}
If $\bar x_\edge>0$, the finite right directional derivative
$V_r'(0;a_\edge)$ exists, and
\begin{equation}
\label{eq:strict-directional}
c_\edge+V_r'(0;a_\edge)<0,
\end{equation}
then $\Qval(R^r\cup\{\edge\})<\Qval(R^r)$. If the support identity in
\eqref{eq:directional-value} holds in direction $a_\edge$, condition
\eqref{eq:strict-directional} is equivalent to
\begin{equation}
\label{eq:directional-rc}
\rc^{r,\mathrm{dir}}_\edge
:=c_\edge-
\inf_{\dual\in\Lambda_r^\star}\dual^\top a_\edge<0.
\end{equation}
When $\Lambda_r^\star$ is a singleton, this reduces to negative reduced
cost at its unique exact price. Negativity at one arbitrary member of a
non-singleton price set is not sufficient.
\end{proposition}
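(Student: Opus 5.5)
We prove the identity \eqref{eq:optional-bid-value} by partial minimization over the new agent, and then read off the strict and directional statements from the one-dimensional function
\[
\phi(x)=c_\edge x+V_r(a_\edge x),\qquad 0\le x\le\bar x_\edge .
\]

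\emph{Identity and non-worsening.} The expanded exchange problem \eqref{eq:exchange} over $\Iset^r\cup\{\edge\}$ has the clearing constraint $\sum_{i\in\Iset^r}q_i+q_\edge=0$. We minimize first over the old agents with $q_\edge$ held fixed. By \eqref{eq:perturbation} this inner value is exactly $V_r(q_\edge)$. Substituting the bid value function \eqref{eq:bid-valuefn}, with $q_\edge=a_\edge x$, $0\le x\le\bar x_\edge$ and cost $c_\edge x$, then gives $\Qval(R^r\cup\{\edge\})=\inf_x\phi(x)$. The one point to check is that an infimum over $x$ combined with an inner infimum over $\{q_i\}$ equals the joint infimum; this is immediate for iterated infima. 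Assumptions~\ref{ass:verify} and~\ref{ass:fixed-offer} ensure that the bid terms are finite and valid and that $x=0$ is admissible. Taking $x=0$ then gives $\phi(0)=V_r(0)=\Qval(R^r)$, which proves the inequality.

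\emph{Strict decrease.} The function $V_r$ is convex: it is the perturbation function of a convex program, being the infimal convolution of the $\tilde\Fval_i$ evaluated at $-b$. Hence $\phi$ is convex on $[0,\bar x_\edge]$, and its right derivative at $0$ is
\[
\phi'(0^+)=c_\edge+V_r'(0;a_\edge).
\]
Here we use positive homogeneity of the directional derivative along the ray $x\,a_\edge$. If this quantity is negative, then by definition of the one-sided derivative there is a small $x\in(0,\bar x_\edge]$ with $\phi(x)<\phi(0)$; the hypothesis $\bar x_\edge>0$ guarantees that such points are admissible. Finiteness of $V_r'(0;a_\edge)$ is assumed, so no extended-value issues arise.

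\emph{Directional reduced cost.} This is where care is needed. Proposition~\ref{cor:sensitivity} gives $\partial V_r(0)=-\Lambda_r^\star$, and therefore
\[
\sup_{g\in\partial V_r(0)}g^\top a_\edge=-\inf_{\dual\in\Lambda_r^\star}\dual^\top a_\edge .
\]
The inequality with $V_r'(0;a_\edge)$ in \eqref{eq:directional-value} follows from the subgradient inequality. Under the assumed support identity it holds with equality, so \eqref{eq:strict-directional} becomes $c_\edge-\inf_{\dual\in\Lambda_r^\star}\dual^\top a_\edge<0$, which is \eqref{eq:directional-rc}. I expect the main obstacle to be justifying the support identity. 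The intended route is the standard max-formula $f'(x;d)=\sup_{g\in\partial f(x)}g^\top d$, valid when $0\in\operatorname{int}\operatorname{dom}V_r$ or when $V_r$ is polyhedral. For this reason I will keep it as a hypothesis, as the statement does, rather than derive it from Assumption~\ref{ass:regularity}.

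\emph{Singleton and non-singleton price sets.} If $\Lambda_r^\star$ is a singleton, the infimum is attained at the unique price, and \eqref{eq:directional-rc} is ordinary negative reduced cost \eqref{eq:erc}. For the final remark I will give a counterexample. Take one commodity, recourse only, with the two-tier cost \eqref{eq:twotier} and demand exactly at the kink, so $\Lambda_r^\star=[P^1,P^2]$. A bid with $a_\edge=1$ and $P^1<c_\edge<P^2$ has negative reduced cost at the price $P^2$. However, it only displaces cheap-tier recourse at rate $P^1<c_\edge$, so $V_r'(0;1)=-P^1$, giving $c_\edge+V_r'(0;1)>0$ and no improvement.
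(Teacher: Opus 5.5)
Your proposal is correct and follows the paper's own sketch. The paper likewise obtains the identity by partial minimization over the bid agent through the perturbation function $V_r$, uses $x=0$ for non-worsening, takes the right derivative $c_\edge+V_r'(0;a_\edge)$ of $x\mapsto c_\edge x+V_r(a_\edge x)$ for the strict decrease, and combines $\partial V_r(0)=-\Lambda_r^\star$ with the assumed support identity to get the equivalence with the directional reduced-cost condition.

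Your kink counterexample, which the paper's sketch does not spell out, is sound once the demand comes from an incumbent whose injection is fixed at $-\bar\gap$. With recourse as the only agent, clearing forces $\gap=0$, and the price set is then $(-\infty,P^1]$ rather than $[P^1,P^2]$.
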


\begin{proof}[Proof sketch of \cref{prop:monotone}]
Set $G_\edge^r(x)=c_\edge x+V_r(a_\edge x)$. Equation
\eqref{eq:optional-bid-value} follows from the bid-agent formulation,
and $x=0$ proves non-worsening. For $\bar x_\edge>0$,
$G_{\edge,+}^{r\prime}(0)=c_\edge+V_r'(0;a_\edge)$; a negative finite
right derivative gives a sufficiently small
$x\in(0,\bar x_\edge]$ with $G_\edge^r(x)<G_\edge^r(0)$. Under the
support identity, \eqref{eq:directional-value} gives
$V_r'(0;a_\edge)=-\inf_{\dual\in\Lambda_r^\star}
\dual^\top a_\edge$, yielding \eqref{eq:directional-rc}.
\end{proof}

\begin{proof}[Proof sketch of Proposition~4.3]
Each engagement removes one record from finite $\mathcal P$, and every
nonterminal outer round admits one persistent bid. No provider yields
more than one bid, so both loops terminate after finitely many actions
and at most $|\mathcal C_{\mathrm{elig}}|$ admissions. At termination let
$\dual_R^\star$ be the exact restricted price used to reprice all
excluded eligible columns. Their reduced costs are nonnegative, and
for each excluded bid
\[
\inf_{0\le x\le\bar x_\edge}
\rc_\edge^r(\dual_R^\star)x
=-\bar x_\edge[-\rc_\edge^r(\dual_R^\star)]_+=0.
\]
Hence the same price gives the full finite master a dual value equal to
the restricted optimum. The full optimum is no smaller by weak duality
and no larger because every added column is optional, so the two values
coincide.
\end{proof}

\paragraph{Optimality-gap bound.}
Let $R$ be the admitted bids, $D$ the revealed and quoted but unadmitted
bids, and $\mathcal U$ the eligible but unrevealed bids, so that
$\mathcal C_{\mathrm{elig}}=R\mathbin{\dot\cup}D\mathbin{\dot\cup}\mathcal U$.
Let $\Qval_R$ be the exact restricted optimum, $\Qval_{\mathcal C}^\star$
the optimum with every eligible column, and $\dual_R^\star$ an exact
optimal restricted price.
\begin{proposition}[Exact-price optimality-gap bound]
\label{prop:certificate}
\begin{equation}
\label{eq:gap-exact}
\Qval_R-\Qval_{\mathcal C}^\star
\le
\sum_{\edge\in D}\bar x_\edge
[-\rc_\edge^r(\dual_R^\star)]_+
+\sum_{\edge\in\mathcal U}\bar x_\edge
[-\rc_\edge^r(\dual_R^\star)]_+.
\end{equation}
\end{proposition}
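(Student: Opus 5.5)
The bound follows from weak (Lagrangian) duality for the full master containing every column in $\mathcal C_{\mathrm{elig}}$, evaluated at the restricted price $\dual_R^\star$. This is the same argument used in the proof sketch of Proposition~4.3, but without assuming that the excluded reduced costs are nonnegative.

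\textbf{Step 1: lower bound for the full master.} Write the full master in the exchange form \eqref{eq:exchange}. Its agent set consists of the recourse agent, the incumbents, the bids in $R$, and one optional bid agent $\tilde{\Fval}_\edge$ from \eqref{eq:bid-valuefn} for each $\edge\in D\cup\mathcal U$. Dualize the clearing constraint with the economic-price Lagrangian \eqref{eq:lagrangian} at $\dual=\dual_R^\star$. The Lagrangian separates across agents. Weak duality then gives
\[
\Qval_{\mathcal C}^\star\ge\sum_{i\in\{0\}\cup\Mset^0\cup R}\inf_{q_i}\{\tilde{\Fval}_i(q_i)-\dual_R^{\star\top}q_i\}
+\sum_{\edge\in D\cup\mathcal U}\inf_{0\le x\le\bar x_\edge}(c_\edge-\dual_R^{\star\top}a_\edge)x .
\]

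\textbf{Step 2: identify the first sum.} Since $\dual_R^\star$ is an exact optimal price for the restricted problem, \cref{ass:regularity} provides a saddle point. Hence the first sum equals the restricted dual value, which equals $\Qval_R$ by strong duality. This is the KKT characterization in Section~S2: at a saddle point each restricted agent minimizes $\tilde{\Fval}_i(q_i)-\dual^{\star\top}q_i$, and the clearing constraint makes the price terms sum to zero. This identification is the only delicate step. It requires the price to be a genuine Lagrange multiplier (saddle point), not merely an element satisfying the sensitivity inequality \eqref{eq:shadow}. I would invoke the saddle-point assumption directly and note that the subgradient characterization of Proposition~\ref{cor:sensitivity} yields the same set $\Lambda_r^\star$.

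\textbf{Step 3: evaluate the bid terms and rearrange.} Each term in the second sum is a one-dimensional linear minimization over $[0,\bar x_\edge]$. Its value is $\min\{0,\rc_\edge^r(\dual_R^\star)\bar x_\edge\}=-\bar x_\edge[-\rc_\edge^r(\dual_R^\star)]_+$. Substituting into Step 1 gives $\Qval_{\mathcal C}^\star\ge\Qval_R-\sum_{\edge\in D\cup\mathcal U}\bar x_\edge[-\rc^r_\edge(\dual_R^\star)]_+$. Rearranging and splitting the sum over the disjoint sets $D$ and $\mathcal U$ gives \eqref{eq:gap-exact}.

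As a corollary, at a repository-first exact stop every $\edge\in D$ has nonnegative reduced cost, so the $D$ term vanishes. This recovers the main-text claim that the remaining gap comes only from unrevealed eligible bids.
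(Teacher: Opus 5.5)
Your proof is correct and follows the paper's own argument: relax full-master clearing at $\dual_R^\star$, identify the recourse, incumbent, and admitted-bid terms with the restricted dual value (equal to $\Qval_R$ by strong duality), evaluate each excluded box-constrained column as $-\bar x_\edge[-\rc_\edge^r(\dual_R^\star)]_+$, then apply weak duality for the full master and rearrange. One small remark: your caution in Step~2 is stronger than needed, because the supporting inequality in \eqref{eq:shadow} is already equivalent to the restricted dual function at $\dual_R^\star$ equaling $V_r(0)=\Qval_R$.
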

\begin{proof}[Proof sketch of \cref{prop:certificate}]
Relaxing full-master clearing at $\dual_R^\star$ leaves the restricted
dual value, which equals $\Qval_R$ by strong duality, plus, for each
excluded box-constrained column, the minimum
$-\bar x_\edge[-\rc_\edge^r(\dual_R^\star)]_+$. Weak duality for the full
master and rearrangement give \eqref{eq:gap-exact}.
\end{proof}
Because Algorithm~1 stops only when no stored bid qualifies, every
bid in $D$ satisfies $\rc_\edge^r(\dual_R^\star)\ge0$ at an exact
restricted price. The known-column term in
\eqref{eq:gap-exact} then vanishes, and the remaining gap is bounded by
the unrevealed-column term alone, as stated in Section~4.5. That
term cannot be evaluated online in an open world. The screen uses a zero
margin. A positive margin could be imposed as a policy variant, but it
would not act as a numerical certificate unless an explicit bound on the
price error calibrated it.
\section{Experimental Protocol and Reproducibility}
\label{app:protocol}

\subsection{Disruption constructions}
\label{app:disruptions}
The five disruption scenarios of the market instances are constructed
as follows; each realized draw is frozen into the instance.
\begin{table}[!htbp]
\centering
\footnotesize
\setlength{\tabcolsep}{3.5pt}
\caption{Disruptions applied to the reference and thin-market
instances. Each realized disruption is
applied to every treatment of its scenario--seed pair. D1 and D4
share one surge draw (D4 uses $0.8$ of it); D2 and D3 share one outage
keep-fraction draw. The undisrupted state must satisfy the normal-state
rule V1 (\cref{tab:acceptance-rules}).}
\label{tab:disruptions}
\begin{tabularx}{\textwidth}{@{}l >{\raggedright\arraybackslash}X >{\raggedright\arraybackslash}X >{\raggedright\arraybackslash}X@{}}
\toprule
& Construction & Scarcity created & Identification role \\
\midrule
D1 & multiply demand of every firm in adjacent cells $c_1,c_2$ by
$U(2.0,2.6)$ in periods 1--2 & storage, handling, and processing in the
region; outbound lane demand & broad, multi-family shortage in which
many commodities compete for one budget \\
\addlinespace[3pt]
D2 & remove $U(60,75)\%$ of W5's chilled storage and handling capacity &
chilled storage and handling at $c_2$ and reachable neighbours & local
quality-classed loss with partial incumbent substitutes \\
\addlinespace[3pt]
D3 & multiply H4's regular pharma-station capacity by $0.6$ times
the D2 keep-fraction draw (15--24\% remains); its overtime cap remains
available & pharma processing in the shared H4--H5 pool &
certification-gated shortage in which uncertified capacity cannot
substitute \\
\addlinespace[3pt]
D4 & cut both X1-operated lanes by $U(40,60)\%$ and multiply X1's
arrivals by $0.8$ times the D1 surge draw (range 1.60--2.08) in periods
1--2 & X1's dock at $c_1$ and its two lane commodities; only
$c_1\!\to\!c_2$ has an incumbent lane counterparty & exact-match
shortage in which near-miss lanes are ineligible \\
\addlinespace[3pt]
D5 & apply D1 and D3 together & the regional multi-family shortage and
the spatially distinct pharma-processing shortage simultaneously &
allocation of one sourcing budget across broad and certification-gated
needs \\
\bottomrule
\end{tabularx}
\end{table}
\subsection{Two-pass recourse calibration}
\label{app:calibration}
A provisional fixed-network solve records each commodity's positive
disruption gaps; the commodity's cheap-tier capacity is then reset to
an independently drawn $U(0.3,0.6)$ fraction of its median positive
pilot gap, and retains its provisional value if it never gaps. The
full benchmark ladder is re-solved afterward. The calibration creates the possibility
of informative kinks without forcing them and makes no claim that a
kink identifies a unique private-cost statistic.

\paragraph{Abundant-capacity sensitivity.}
The denominator check reruns all 200 episodes with regular incumbent
capacities multiplied by 10, 100, or 1{,}000 and no external providers. All
three factors preserve the headline share of open value in capacity-scarcity cost and
the number of episodes above its reporting threshold. Factors 100 and
1{,}000 differ by at most $1.53\times10^{-8}$ in relative state objective
and 0.00027 dollar in $\Delta_{\mathrm{scar}}$; factor 100 is therefore used
in the reported decomposition.

\subsection{Frozen-universe protocol}
\label{app:freeze}
The protocol fixes the incumbent network, disruptions, provider truth,
disclosures, directory entries, and offers before any evaluated reader or
contact policy runs. The independent writer renders cards from the frozen
disclosures; evaluated readers then process the completed cards and cache
responses by model and prompt hash. Treatments operate only on these stored
artifacts, with eligibility and the fixed bid revealed at contact according
to the mechanism of Section~4. Content hashes bind each run to its instance, card text, reader output, and policy configuration.
Disruption magnitudes, operating-cost parameters, and the relationship
between commercial posture and offer price are experimental choices.

\paragraph{Generated ontology and provider mixes.}
A commodity key combines one of five resource families (storage,
handling, processing, dock, and lane), an exact service cell or directed
origin--destination pair, a response day, and one of three quality
classes (ambient, chilled, and pharma-certified); for example,
pharma-certified processing in cell $c_5$ on response day 2 is one
commodity. Incumbent
participation generates 20 categories in the small network (8 storage, 4
handling, 4 processing, 2 dock, and 2 lane) and 117 in the regional network
(33, 33, 18, 12, and 21, respectively). Commodity counts
therefore follow from incumbent participation rather than hand selection,
and a commodity without an incumbent counterparty can be recovered only
through recourse or external admission. The small validation network
places its 6 firms (2 W, 2 H, 2 X) in three cells on a line, 30\,km apart
with a 50\,km reach, over two daily periods. Pooled work is assigned to
the partner's facility, station, dock, or operated lane. Quality classes
nest for chilled warehouses, which can pool ambient storage and handling
from their chilled capacity, and for external providers, whose
higher-class capacity is eligible for lower-class needs. Hubs pool
processing hours only in the class of their own stations
(\cref{app:hub}).

The reference market contains 63 eligible and competitively priced
records, 50 eligible but too expensive records, 75 ineligible distractors,
and 62 eligible, competitive, and tersely described records, following the
classes of Section~5.1. The corresponding counts are 25, 63,
112, and 50 in the thin market and 20, 16, 24, and 20 in the validation
market. Each record is assigned one existing category and one fixed offer before disruptions are realized.
Compatible providers are based at the target service location, so the
reported experiments do not identify an external-repositioning margin.

\paragraph{Controls, pairing, and inference.}
Under independent response, the cheap recourse tier $\bar\gap_\kk$ of
(4) is allocated among firms in proportion to their owned
capacity, so every regime has the same emergency capacity. Within a
paired comparison, the tie-breaking order is also fixed, so no evaluated
reader or search policy can add an economically active provider, alter
eligibility, or modify an offer. A complete policy trajectory has no
externally imposed contact limit, re-coordinates after every admission,
and may leave providers uncontacted at its endpoint; stored trajectories
support later cost replay without rerunning provider search or
coordination. The mechanism-validation study uses 12 frozen coordination
states and 9 finite-universe admission states from the small and regional
networks, and all 200 reference-market episodes enter the economic
analysis (\cref{app:acceptance}).

The episode is the unit of pairing, and the five disruptions and all
commodity briefs generated from one seed are not treated as independent
replications. Economic-regime, search, reading, offer-identity, and
coordinator contrasts pair the same episodes, and retrieval contrasts pair
the same brief and nested directory construction. Means use 40 seed-level
replications, and paired effects and ratio estimands use the seed-cluster
bootstrap of \cref{tab:numerical-settings}; the market-thickness comparison
resamples aligned seed blocks containing both provider mixes.

\paragraph{Computational environment.}
The testbed uses Python~3.12.5, CVXPY~1.9.2, and Clarabel~0.11.1.
Exchange-ADMM uses the settings of \cref{tab:numerical-settings} with
residual balancing off, and the parametrized proximal subproblems are
warm-started internally by the convex solver. Fixed-network validation
calls start from a zero ADMM state, and the finite-tolerance contact experiment of
Section~5.2 uses the state transfer of \cref{app:coordination}. The main reader is gpt-oss-120b at pinned
identifier \path{openai.gpt-oss-120b-1:0}, temperature zero, low
reasoning effort, and a 1{,}200-token output limit. The
writer is Nova Pro (\path{amazon.nova-pro-v1:0}) and is never evaluated
as a reader. The runs and 60-second tractability check were executed on
an Apple M3~Pro laptop with 36\,GB of RAM.

\Cref{tab:numerical-settings} collects the numerical settings of the
reported runs. All were fixed before the reported experiments; none was
tuned on reported outcomes.
\begin{table}[!htbp]
\centering
\caption{Numerical settings of the reported experiments. ``Numerical''
marks thresholds that only guard against round-off; ``design'' marks
predeclared modeling choices.}
\label{tab:numerical-settings}
\scriptsize
\setlength{\tabcolsep}{4pt}
\begin{tabularx}{\textwidth}{@{}l>{\raggedright\arraybackslash}Xl>{\raggedright\arraybackslash}X@{}}
\toprule
Group & Setting & Value & Choice \\
\midrule
Need selection & Gap scale $\sigma_\kk$ & $\max\{\bar\gap_\kk,10^{-2}\}$ & design; floor numerical \\
 & Brief threshold $\varepsilon_{\mathrm{brief}}$ & $10^{-9}$ & numerical \\
 & Material-gap stop $d_{\min}$ & $10^{-6}$ & design \\
 & Need interleaving & $d_\kk/(1+n_\kk)$ & design (Section~4.1) \\
Provider evidence & Indirect-evidence credit $\theta$ & 0.5 & design \\
 & Commercial-posture weight $\nu$ & 0.25 & design; keeps soft cues below hard evidence \\
 & Within-need tie tolerance & $10^{-12}$ (absolute) & numerical; fixed pseudo-random tie order \\
Admission & Outer-round limit & 600 & safeguard; no reported run reaches it \\
Coordination & Residual tolerance $\epsilon$ & $10^{-4}$ & numerical \\
 & Iteration cap & 3{,}000 & numerical \\
 & Penalty $\rho$ & $\operatorname{median}_\kk(P^1_\kk/\sigma_\kk)$, fixed per call & numerical initialization \\
Recourse & Tier-1 price $P^1_\kk$ & $U(1.8,2.5)\times$ family marginal $\times$ class multiplier & anchored to spot premia \\
 & Tier-2 price $P^2_\kk$ & $U(5,8)\times$ family marginal $\times$ class multiplier, $\ge1.5P^1_\kk$ & anchored to spot premia \\
 & Class multiplier & ambient 1, chilled 1.5, pharma 2.5 & chilled anchored; pharma design \\
 & Cheap-tier capacity $\bar\gap_\kk$ & $U(0.3,0.6)\times$ median positive pilot gap & design (\cref{app:calibration}) \\
 & Abundant-capacity factor & 100 (10 and 1{,}000 checked) & design \\
Search and inference & Contact limits $K$ & 5, 10, 20, 40, 80, endpoint & reporting grid \\
 & Retrieval depth; directory sizes & 40; 250, 1{,}000, 5{,}000, 10{,}000 & reporting grid \\
 & Bootstrap & 100{,}000 seed-cluster draws, seed 20260914 & fixed \\
Net value & Acquisition charge $b$ & 5, 12, 25 US\$ & 0.7, 1.7, 3.5\% of mean open value \\
 & Reading; registration cost & 0.0004 US\$ per card; 0.10 US\$ per provider & illustrative \\
 & Cap selection & $K\in\{0,\ldots,250\}$ on seeds 1--20, holdout 21--40 & fixed split \\
Reader & Model and decoding & GPT-OSS-120B, temperature 0, 1{,}200 tokens, one parse retry & fixed \\
\bottomrule
\end{tabularx}
\end{table}

\subsection{Validity checks}
\label{app:acceptance}
Every generated episode is screened before policy execution. The three
checks in \cref{tab:acceptance-rules} confirm a calm baseline, a
substantive disruption, and a tractable benchmark ladder; none rejects an
episode.
\begin{table}[!htbp]
\centering
\footnotesize
\setlength{\tabcolsep}{4pt}
\caption{Pre-registered validity checks. They apply to every episode and
reject none.}
\label{tab:acceptance-rules}
\begin{tabularx}{\textwidth}{@{}>{\raggedright\arraybackslash}p{3.4cm} >{\raggedright\arraybackslash}X@{}}
\toprule
Rule & Exact test \\
\midrule
V1: calm normal state & normal-state emergency-recourse share below
2\%; normal-state open-value share below 2\% \\
\addlinespace[2pt]
V2: substantive disruption & systemwide no-coordination recourse cost is
at least 15\% of the no-coordination cost of the directly affected firms \\
\addlinespace[2pt]
V3: tractability & the complete no-coordination/fixed/open benchmark
ladder solves in under 60 seconds \\
\bottomrule
\end{tabularx}
\end{table}

\section{Supporting Evidence and Robustness}
\subsection{Disruption-cost decomposition}
\label{app:stress}
\Cref{tab:exp1-components} decomposes the all-episode disruption cost into
the volume and capacity-scarcity components used in Section~5.3.
The total increment
$\Delta_{\mathrm{dis}}=\Qval^{\mathrm{fixed}}-
\Qval^{\mathrm{fixed,normal}}$ also includes volume-related cost that
persists with abundant capacity. Subtracting the disrupted-minus-normal
cost with abundant incumbent capacity gives
\[
\Delta_{\mathrm{scar}}=\Delta_{\mathrm{dis}}-
\bigl(\Qval^{\infty}_{\mathrm{disrupted}}-
\Qval^{\infty}_{\mathrm{normal}}\bigr),
\]
where $\infty$ denotes incumbent capacity scaled by 100.
Volume cost is zero when demand is unchanged and accounts for 30--36\% of
$\Delta_{\mathrm{dis}}$ in the three disruptions that change demand.
\begin{table}[!htbp]
\centering
\small
\setlength{\tabcolsep}{5pt}
\caption{Disruption cost and its components on the reference instance,
means over all 200 episodes (US dollars per episode).
$\Delta_{\mathrm{dis}}$ is the disruption-induced cost under fixed
membership; the volume component is the cost the disruption would
impose with abundant capacity (every incumbent capacity scaled by
100); $\Delta_{\mathrm{scar}}$ is their difference, the
capacity-scarcity component used as the denominator in
Table~3. The last column is the volume component's share
of $\Delta_{\mathrm{dis}}$ (ratio of sums).}
\label{tab:exp1-components}
\begin{tabular}{@{}lrrrr@{}}
\toprule
Scenario & \makecell{Disruption cost\\$\Delta_{\mathrm{dis}}$} & \makecell{Volume\\component} & \makecell{Capacity-scarcity\\component $\Delta_{\mathrm{scar}}$} & \makecell{Volume share\\of $\Delta_{\mathrm{dis}}$} \\
\midrule
Regional surge (2 cells) & $8{,}652$ & $3{,}137$ & $5{,}515$ & 36\% \\
Chilled-warehouse outage & $420$ & $0$ & $420$ & 0\% \\
Certification loss (pharma) & $1{,}024$ & $0$ & $1{,}024$ & 0\% \\
Lane cut $+$ arrival surge & $1{,}367$ & $404$ & $964$ & 30\% \\
Compound (surge $+$ cert.\ loss) & $9{,}677$ & $3{,}137$ & $6{,}539$ & 32\% \\
\midrule
All episodes & $4{,}228$ & $1{,}336$ & $2{,}892$ & 32\% \\
\bottomrule
\end{tabular}
\end{table}

\subsection{Operational and service detail}
\label{app:operational-detail}

\Cref{tab:operational-effects} reports the aggregate regime comparison
summarized in Section~5.3. \Cref{tab:operational-service-detail} decomposes service timing by
disruption; backlog arises only in the demand-surge and lane-cut families.
Entries are episode-normalized percentages except the dollar row. The
open effect is the paired full-open-minus-fixed change, with a
100{,}000-resample seed-clustered 95\% confidence interval. Terminal
backlog is below $1.2\times10^{-9}$ of demand in every cell, so every
regime clears its backlog within the horizon. Full-open scenario
effects on same-period service range from a 0.14-point decline under the
surge and compound disruptions to a 0.08-point improvement after the lane
cut. Admitted external providers use 64--88\% of their offered capacity
across disruptions, a descriptive pattern consistent with operational
substitution rather than pure option holding. Because scenario service
effects differ in sign, the data support neither dominance nor
equivalence.

\begin{table}[!htbp]
\centering
\caption{Service guardrail and capacity substitution across regimes.
Panel A reports episode means; Panel B reports paired regime changes
with 95\% seed-clustered percentile intervals. All rows except the final
spending row are percentages, with Panel B changes in percentage points;
the final row is in US dollars. Scenario-level service and backlog results are in
\cref{tab:operational-service-detail}.}
\label{tab:operational-effects}
\footnotesize
\setlength{\tabcolsep}{5pt}
\begin{tabularx}{\textwidth}{@{}>{\raggedright\arraybackslash}Xrrr@{}}
\toprule
\multicolumn{4}{@{}l}{\textit{Panel A: regime means}} \\
Outcome & Independent & Fixed pooling & Full-open \\
\midrule
\multicolumn{4}{@{}l}{\textit{Service guardrail}} \\
Same-period service rate & 99.33 & 99.32 & 99.28 \\
Backlog-days / demand & 0.69 & 0.70 & 0.75 \\
\addlinespace[3pt]
\multicolumn{4}{@{}l}{\textit{Capacity substitution}} \\
Affected-family utilization & 67.61 & 73.98 & 71.50 \\
Hub overtime utilization & 11.80 & 20.17 & 19.85 \\
Deep-tier recourse cost / operating cost & 19.69 & 6.65 & 2.02 \\
Deep-tier recourse cost (US\$/episode) & $5{,}191$ & $1{,}457$ & $421$ \\
\bottomrule
\end{tabularx}

\vspace{0.4em}
\setlength{\tabcolsep}{5pt}
\begin{tabularx}{\textwidth}{@{}>{\raggedright\arraybackslash}Xrr@{}}
\toprule
\multicolumn{3}{@{}l}{\textit{Panel B: paired changes (95\% interval)}} \\
Outcome & Fixed minus independent & Full-open minus fixed \\
\midrule
\multicolumn{3}{@{}l}{\textit{Service guardrail}} \\
Same-period service rate & $-0.01\ [-0.03,\,+0.01]$ & $-0.04\ [-0.10,\,+0.02]$ \\
Backlog-days / demand & $+0.01\ [-0.01,\,+0.03]$ & $+0.05\ [-0.02,\,+0.11]$ \\
\addlinespace[3pt]
\multicolumn{3}{@{}l}{\textit{Capacity substitution}} \\
Affected-family utilization & $+6.37\ [+6.01,\,+6.73]$ & $-2.47\ [-2.81,\,-2.15]$ \\
Hub overtime utilization & $+8.37\ [+7.61,\,+9.08]$ & $-0.32\ [-0.55,\,-0.14]$ \\
Deep-tier recourse cost / operating cost & $-13.03\ [-13.49,\,-12.58]$ & $-4.63\ [-5.17,\,-4.11]$ \\
Deep-tier recourse cost (US\$/episode) & $-3{,}734\ [-3{,}947,\,-3{,}522]$ & $-1{,}036\ [-1{,}176,\,-900]$ \\
\bottomrule
\end{tabularx}
\end{table}

\begin{table}[!htbp]
\centering
\caption{Service timing by disruption and information regime. All
entries are percentages. Open effect is full-open minus fixed pooling
in percentage points, with seed-clustered 95\% confidence intervals.}
\label{tab:operational-service-detail}
\scriptsize
\setlength{\tabcolsep}{3.5pt}
\begin{tabularx}{\textwidth}{@{}>{\raggedright\arraybackslash}Xrrrr@{}}
\toprule
Scenario & Independent & Fixed pooling & Full-open & \makecell{Open effect\\(95\% interval)} \\
\midrule
\multicolumn{5}{@{}l}{\textit{Panel A: Same-period service rate (\%)}} \\
Regional surge (2 cells) & 98.57 & 98.59 & 98.45 & $-0.14\ [-0.28,\,-0.01]$ \\
Chilled-warehouse outage & 100.00 & 100.00 & 100.00 & $0.00\ [0.00,\,0.00]$ \\
Certification loss (pharma) & 100.00 & 100.00 & 100.00 & $0.00\ [0.00,\,0.00]$ \\
Lane cut $+$ arrival surge & 99.54 & 99.44 & 99.53 & $+0.08\ [0.02,\,0.15]$ \\
Compound (surge $+$ cert. loss) & 98.57 & 98.59 & 98.45 & $-0.14\ [-0.28,\,-0.01]$ \\
\addlinespace[3pt]
\multicolumn{5}{@{}l}{\textit{Panel B: Backlog-days / demand (\%)}} \\
Regional surge (2 cells) & 1.50 & 1.48 & 1.64 & $+0.16\ [0.01,\,0.31]$ \\
Chilled-warehouse outage & 0.00 & 0.00 & 0.00 & $0.00\ [0.00,\,0.00]$ \\
Certification loss (pharma) & 0.00 & 0.00 & 0.00 & $0.00\ [0.00,\,0.00]$ \\
Lane cut $+$ arrival surge & 0.47 & 0.57 & 0.49 & $-0.08\ [-0.15,\,-0.02]$ \\
Compound (surge $+$ cert. loss) & 1.50 & 1.48 & 1.64 & $+0.16\ [0.01,\,0.31]$ \\
\addlinespace[3pt]
\multicolumn{5}{@{}l}{\textit{Panel C: Peak backlog / demand (\%)}} \\
Regional surge (2 cells) & 1.47 & 1.46 & 1.54 & $+0.08\ [-0.05,\,0.20]$ \\
Chilled-warehouse outage & 0.00 & 0.00 & 0.00 & $0.00\ [0.00,\,0.00]$ \\
Certification loss (pharma) & 0.00 & 0.00 & 0.00 & $0.00\ [0.00,\,0.00]$ \\
Lane cut $+$ arrival surge & 0.47 & 0.55 & 0.47 & $-0.08\ [-0.14,\,-0.01]$ \\
Compound (surge $+$ cert. loss) & 1.47 & 1.46 & 1.54 & $+0.08\ [-0.05,\,0.20]$ \\
\bottomrule
\end{tabularx}
\end{table}

\subsection{Provider concentration and market thickness}
\label{app:thickness}
\paragraph{Backward-elimination support diagnostic.}
The solver-selected full-open support averages about 2 providers for
the chilled outage and certification loss, 7 for the lane cut, and 18
for the regional surge and compound disruption. Starting from each
solver-selected support, a greedy backward-elimination path removes the
provider whose removal causes the smallest increase in operating cost
after exact reoptimization (Figure~5b). This ex-post
construction is a nested support diagnostic, not a globally optimal
cardinality frontier, provider-value attribution, or implementable
acquisition policy.

Value-weighted across the 192 positive-open-value episodes, the
backward-elimination paths retain 31.3\% of attainable open value with
one provider (95\% interval 29.2\% to 33.6\%), 78.5\% with six (76.1\%
to 80.9\%), and 90.4\% with ten (88.7\% to 92.2\%); fifteen retain
97.4\%. One provider retains 74.4\% after a chilled outage and 83.2\%
after a certification loss, compared with about 26\% under the
regional surge and compound disruption; the lane cut lies between
these cases.

\paragraph{Market thickness.}
The market-thickness contrast keeps each seed's incumbents and
disruptions but pairs them with an alternative 250-provider thin market
generated with common random numbers. Competitive records fall from 50\%
to 30\%, a 40\% relative reduction, while distractors rise from 30.0\%
to 44.8\% (\cref{app:freeze}). This is an aligned seed-block comparison
between two provider mixes, not deletion from one realized directory or
an independent market sample. The thin market adds 200 aligned
episodes. Total open value is 75\% of the reference condition (95\% interval 68\% to 83\%). Open value as a share of
capacity-scarcity cost falls from 24.6\% to 18.6\%, and the share of
episodes above the 10\% threshold falls from 81.5\%
to 66.0\%. Across disruptions, thin-market open value is 0.62 to 0.79
of the reference value.

The 25\% decline in total open value is smaller than the 40\% reduction
in competitive records, a pattern consistent with provider redundancy
under this generator. The contrast does not isolate redundancy because
the mixes can also differ in locations, quantities, and offer prices.

\subsection{Validation details}
\label{app:validation-details}
\Cref{tab:parity-full} lists the state-level inner-loop diagnostics
summarized in Section~5.2. Across the six regional
states, the largest relative parity gap is $1.6\times10^{-4}$. The
exhaustive admission check uses exact assessment, unlimited contacts, and
no early stopping, so every strictly eligible offer is quoted. It covers
two states for each 80-record validation seed and five regional states
with 250 records. Providers are engaged in a fixed order without feedback
from coordination, and each terminal admitted set is re-solved centrally
and compared with the full-information program that exposes all strictly
eligible offers at once. Validation seeds 3 and 5 and regional seed 1
admit 5, 6--8, and 4--17 bids per state; their maximum relative gaps to
full-open are $9.7\times10^{-9}$, $1.1\times10^{-8}$, and
$4.0\times10^{-8}$, and their minimum excluded reduced costs are $+1.57$,
$+1.64$, and $+1.04$. The trace records 1{,}570 engagements, of which
1{,}099 quote a strictly eligible offer and 471 reach records that hold no
offer and fail verification without an RFQ; provider order, selected key,
and RFQ accounting match the frozen queue in every state. Finite-tolerance captured value over all 200
episodes is reported in Table~2, and the admission
agreement behind it is reported below.

The recourse-interval check accepts
$P^1_\kk-0.1\leq\hat{\dual}_\kk\leq P^2_\kk+0.1$; the 0.1 price-unit
allowance is numerical rather than economic. The finite-perturbation
interval is expanded by the larger of 0.5 price units and 0.5\% of the
local marginal-value scale. All 24 regional and 12 validation prices on
an interior recourse segment match the active tier marginal within 0.5\%,
and all 14 validation boundary prices pass both checks. Of the 76 regional commodity-state
observations with positive emergency recourse, 52 sit at a kink. Among
these boundary prices,
51 of 52 lie in the implemented recourse interval, 47 of 52 lie in the
finite-perturbation interval, and 46 pass both checks; five fail only
the perturbation check and one only the recourse-interval check.

\begin{table}[!htbp]
\centering
\footnotesize
\setlength{\tabcolsep}{3pt}
\caption{Inner-loop parity by state (frozen instances). $\Qval^\star$
denotes the centralized optimum, and $\widehat{\Qval}$ the terminal
exchange-ADMM reported-cost statistic under residual tolerance
$10^{-4}$. It is a numerical parity diagnostic, not a certified
feasible recovered objective or an objective-error bound.}
\label{tab:parity-full}
\begin{tabular}{@{}llrrrrr@{}}
\toprule
Instance & State & \makecell{Centralized\\optimum $\Qval^\star$} & \makecell{ADMM\\objective $\widehat{\Qval}$} & \makecell{Relative\\gap} & Iterations & Time (s) \\
\midrule
validation, seed 3 & normal & $3{,}534.95$ & $3{,}535.02$ & $2.1\times10^{-5}$ & 95 & 0.5 \\
validation, seed 3 & demand surge & $11{,}178.32$ & $11{,}182.14$ & $3.4\times10^{-4}$ & 1,420 & 6.1 \\
validation, seed 3 & capacity outage & $4{,}007.47$ & $4{,}007.67$ & $5.0\times10^{-5}$ & 443 & 1.9 \\
validation, seed 5 & normal & $3{,}589.13$ & $3{,}589.23$ & $2.9\times10^{-5}$ & 79 & 0.4 \\
validation, seed 5 & demand surge & $9{,}523.80$ & $9{,}523.94$ & $1.5\times10^{-5}$ & 737 & 3.3 \\
validation, seed 5 & capacity outage & $4{,}574.96$ & $4{,}575.32$ & $7.9\times10^{-5}$ & 803 & 3.6 \\
reference, seed 1 & normal & $12{,}634.73$ & $12{,}635.98$ & $9.9\times10^{-5}$ & 1,381 & 25 \\
reference, seed 1 & regional surge & $18{,}439.33$ & $18{,}441.14$ & $9.8\times10^{-5}$ & 877 & 15 \\
reference, seed 1 & chilled outage & $13{,}264.47$ & $13{,}266.38$ & $1.4\times10^{-4}$ & 2,644 & 46 \\
reference, seed 1 & certification loss & $14{,}472.00$ & $14{,}474.35$ & $1.6\times10^{-4}$ & 1,645 & 28 \\
reference, seed 1 & lane cut & $13{,}530.39$ & $13{,}531.38$ & $7.3\times10^{-5}$ & 1,713 & 30 \\
reference, seed 1 & compound & $20{,}276.60$ & $20{,}279.62$ & $1.5\times10^{-4}$ & 899 & 16 \\
\bottomrule
\end{tabular}
\end{table}

\paragraph{Admission agreement under finite-tolerance coordination.}
The exact and ADMM paths of Table~2 use the same policy and
tie streams, so they coincide until one coordination call induces a
different action. Of the 1{,}212 coordination calls, 135 (11.1\%) reach the
iteration cap, affecting 76 episodes. The 200 ADMM paths use 1{,}184{,}805 total iterations and
take 21{,}462.8 seconds. Identical admitted sets occur in 91.5, 82.0, 58.5,
39.0, 31.5, and 29.0\% of the 200 episodes at $K=5$, 10, 20, 40, 80, and
the endpoint, and the pooled intersection-over-union of admitted sets
falls from 0.88 to 0.62. Of the 200 first divergences, 187 select a
different need, 11 stop differently, and 2 screen the same quote
differently. In 96 episodes the divergence occurs at the first
coordination call, before any admission, although the largest normalized
gap differs from its exact value by a median of 0.07\%. While the paths
still coincide, the first divergence follows 33 of 58 capped calls
(56.9\%) and 167 of 357 tolerance-met calls (46.8\%), so capped calls
raise the divergence rate only modestly, and the evidence does not
establish convergence of every intermediate call.

At the endpoint, the ADMM-minus-exact captured share is $+3.8$ points
(95\% interval $-1.2$ to $+9.4$) in the 76 episodes with a capped call
and $-0.6$ ($-5.1$ to $+3.6$) in the other 124. Capped episodes also have
longer paths, with 7.0 rather than 5.5 coordination calls on average.
Among the 142 episodes whose endpoint sets differ, the median absolute
difference in captured value is 9.4\% of the episode's open value, in
both directions. These statistics are computed offline from the stored
trajectories; no experiment is rerun.

\subsection{Pre-contact direction robustness}
\label{app:price-signal}

The main text reports the direction levels and primary contrasts, and
\cref{tab:contact-order} lists them at three contact limits. Conditional on
gap direction, the LLM-ordering effect is 4.9 points at $K=80$ and its
interval includes zero, as wider engagement allows random ordering to
reach more providers.

No secondary rule selects a canonical member of the exact price set
$\Lambda_r^\star$ (Section~3.5). The initialization, penalty metric,
and carried state shape the ADMM trajectory and may affect its limiting
price, and the stopping rule truncates the trajectory at $\hat\dual^r$.
Any price-based statistic below therefore depends on the particular
coordination run, whereas a gap-only direction for a fixed returned gap
does not depend on the paired price. The
additional paired contrasts below test whether shadow-price or public-
schedule weighting improves on the normalized coordination gap and whether
unit-dependent raw gaps perform as well as normalized gaps. The
shadow-price weighting ranks needs by $[\hat\dual_\kk]_+\hat\gap_\kk$, the
gap valued at the returned price. The public-schedule weighting ranks them
by $\Phi'_{\kk,+}(\hat\gap_\kk)\,\hat\gap_\kk$, the gap valued at the public
recourse slope, which is $P^1_\kk$ below the cheap-tier capacity and
$P^2_\kk$ from it onward. The raw-gap rule ranks needs by $\hat\gap_\kk$ in
physical units. All three keep the interleaving of
(18) and the provider evidence of the direction
study.

\begin{table}[!htbp]
\centering
\caption{Additional pre-contact direction contrasts over all 200 episodes.
Entries are paired percentage-point differences in captured full-open value;
brackets are 95\% seed-cluster percentile intervals.}
\label{tab:contact-direction-full}
\scriptsize
\setlength{\tabcolsep}{2pt}
\resizebox{\textwidth}{!}{%
\begin{tabular}{@{}lrrrrrr@{}}
\toprule
Direction contrast & $K=5$ & $K=10$ & $K=20$ & $K=40$ & $K=80$ & Endpoint \\
\midrule
Platform schedule $-$ coordination gap & $-1.8\ [-5.1,\,+1.4]$ & $-1.0\ [-5.0,\,+2.9]$ & $-0.9\ [-5.1,\,+2.9]$ & $-2.5\ [-8.1,\,+3.0]$ & $-2.3\ [-8.3,\,+3.3]$ & $-2.1\ [-7.7,\,+3.2]$ \\
Shadow price $-$ coordination gap & $-1.6\ [-4.9,\,+1.6]$ & $-1.0\ [-4.9,\,+2.7]$ & $-1.8\ [-5.7,\,+1.9]$ & $-2.6\ [-7.3,\,+2.2]$ & $-2.2\ [-7.4,\,+3.0]$ & $-1.3\ [-6.3,\,+3.7]$ \\
Shadow price $-$ platform schedule & $+0.2\ [-0.3,\,+0.8]$ & $0.0\ [-0.8,\,+0.8]$ & $-0.8\ [-2.0,\,+0.1]$ & $-0.1\ [-2.6,\,+2.4]$ & $+0.1\ [-2.8,\,+3.0]$ & $+0.7\ [-2.3,\,+3.7]$ \\
Raw gap $-$ normalized coordination gap & $-2.5\ [-5.5,\,+0.3]$ & $-3.0\ [-6.4,\,+0.3]$ & $-5.1\ [-9.0,\,-1.4]$ & $-9.5\ [-14.2,\,-5.0]$ & $-11.7\ [-16.6,\,-6.8]$ & $-7.7\ [-12.5,\,-3.0]$ \\
\bottomrule
\end{tabular}}
\end{table}

None of the price- or public-schedule-weighted contrasts demonstrates an
advantage at any checkpoint. Shadow prices therefore retain their distinct
post-RFQ role in reduced-cost admission rather than a demonstrated role in
pre-contact direction.

\begin{table}[!htbp]
\centering
\caption{Paired effects of need direction and provider ordering on captured
full-open value. Entries are percentage points with 95\% seed-cluster
intervals; the two row groups are separate controlled experiments.}
\label{tab:contact-order}
\footnotesize
\setlength{\tabcolsep}{5pt}
\begin{tabularx}{\textwidth}{@{}>{\raggedright\arraybackslash}Xrrr@{}}
\toprule
Paired contrast & $K=10$ & $K=40$ & $K=80$ \\
\midrule
\multicolumn{4}{@{}l}{\textit{Direction isolation: structured provider evidence}} \\
Normalized gap $-$ undirected random
& $+13.0\ [8.3,\,18.0]$ & $+29.1\ [23.0,\,35.3]$ & $+29.5\ [23.2,\,35.6]$ \\
Normalized gap $-$ raw gap
& $+3.0\ [-0.3,\,6.4]$ & $+9.5\ [5.0,\,14.2]$ & $+11.7\ [6.8,\,16.6]$ \\
\addlinespace[4pt]
\multicolumn{4}{@{}l}{\textit{Need $\times$ provider ordering: hidden offered key}} \\
Gap need $-$ uniform need $\mid$ LLM evidence
& $+11.1\ [7.1,\,15.4]$ & $+19.6\ [14.0,\,25.2]$ & $+22.6\ [16.2,\,28.8]$ \\
LLM evidence $-$ random provider $\mid$ gap need
& $+4.7\ [1.7,\,8.2]$ & $+8.8\ [4.2,\,13.1]$ & $+4.9\ [-0.2,\,9.7]$ \\
\bottomrule
\end{tabularx}
\end{table}

\paragraph{Scale and interleaving sensitivity.}
\Cref{tab:scale-sensitivity} changes only how active needs are ranked; the
coordination outputs, active need set, stopping rule, provider order within a
need, and all other settings are unchanged. The historical family scale
divides the raw gap by the median material round-zero gap of the need's
resource family on seeds 1--20. Declared incumbent capacity divides it by the
undisrupted capacity that participating incumbents hold for the commodity,
which is known when the pool is formed. The no-interleaving rule ranks needs
by $\hat d^r_\kk$ alone. With coordination outputs held fixed, a historical
family scale, declared incumbent capacity, and removal of the interleaving
change capture at $K=40$ in the direction-isolation design by $-3.8$,
$-3.2$, and $+0.9$ points, all within sampling error. The cheap-tier scale
is about 6 points stronger at $K=10$, and under LLM-read cards it exceeds
incumbent capacity by 6.4 points at $K=40$. These runs make no language-model call.

\begin{table}[!htbp]
\centering
\caption{Scale and interleaving sensitivity over all 200 episodes. Entries
are paired differences from the cheap-tier normalized coordination gap in
percentage points of captured full-open value; brackets are 95\%
seed-cluster percentile intervals.}
\label{tab:scale-sensitivity}
\footnotesize
\setlength{\tabcolsep}{5pt}
\begin{tabularx}{\textwidth}{@{}>{\raggedright\arraybackslash}Xrrr@{}}
\toprule
Need ranking & $K=10$ & $K=40$ & Endpoint \\
\midrule
\multicolumn{4}{@{}l}{\textit{Direction isolation: structured provider evidence}} \\
Historical family scale & $-5.9\ [-9.4,\,-2.6]$ & $-3.8\ [-10.2,\,+2.3]$ & $-2.1\ [-7.7,\,+3.6]$ \\
Declared incumbent capacity & $-6.0\ [-9.5,\,-2.6]$ & $-3.2\ [-9.5,\,+3.0]$ & $0.0\ [-4.4,\,+4.5]$ \\
No interleaving & $+1.3\ [-2.8,\,+5.2]$ & $+0.9\ [-5.5,\,+7.0]$ & $+0.6\ [-4.8,\,+5.8]$ \\
\addlinespace[4pt]
\multicolumn{4}{@{}l}{\textit{Hidden offered key: LLM evidence}} \\
Historical family scale & $-3.9\ [-7.0,\,-1.0]$ & $-0.8\ [-6.9,\,+4.8]$ & $+0.1\ [-4.5,\,+4.3]$ \\
Declared incumbent capacity & $-4.2\ [-7.7,\,-0.9]$ & $-6.4\ [-12.3,\,-0.5]$ & $+1.0\ [-3.8,\,+5.4]$ \\
No interleaving & $+1.4\ [-2.7,\,+5.4]$ & $-1.7\ [-7.6,\,+4.9]$ & $+3.6\ [-2.1,\,+9.7]$ \\
\bottomrule
\end{tabularx}
\end{table}

\paragraph{Directory-scale retrieval.}
At 10{,}000 records, 78.5\% of the top 40 records returned by LLM-extracted
attributes are relevant, versus 25.0\% for BM25 and 82.5\% under perfect
extraction. The paired improvements of LLM-extracted attributes over BM25
are 8.2 percentage points (95\% interval 8.1 to 8.4) in recall and 0.554
(0.553 to 0.556) in nDCG. Mean LLM query time is 82.2 ms at 10{,}000
records, with no model call during retrieval.

\subsection{Reading and offer-identity contrasts}
\label{app:reading-detail}
Figure~8 reports the decision-relevant levels. The supplementary
table retains only paired differences across all six checkpoints; perfect
extraction uses the as-authored attributes defined in
Section~5.1, and the registered key is administrative.

\begin{table}[!htbp]
\centering
\caption{Paired reading and offer-identity contrasts over all 200 episodes.
Entries are percentage-point differences in captured full-open value;
brackets are 95\% seed-cluster percentile intervals.}
\label{tab:reading-options-full}
\scriptsize
\setlength{\tabcolsep}{2pt}
\resizebox{\textwidth}{!}{%
\begin{tabular}{@{}lrrrrrr@{}}
\toprule
Contrast & $K=5$ & $K=10$ & $K=20$ & $K=40$ & $K=80$ & Endpoint \\
\midrule
Manual review of 40 cards $-$ no card information & $+3.1\ [+1.1,\,+5.3]$ & $+4.2\ [+1.9,\,+6.7]$ & $+5.5\ [+2.8,\,+8.6]$ & $+6.8\ [+3.8,\,+10.1]$ & $+7.1\ [+3.8,\,+10.8]$ & $+6.5\ [+2.7,\,+10.4]$ \\
Complete structured form $-$ no card information & $+9.4\ [+6.1,\,+13.4]$ & $+13.6\ [+9.6,\,+18.0]$ & $+18.1\ [+13.9,\,+22.9]$ & $+25.0\ [+20.8,\,+29.4]$ & $+33.4\ [+28.4,\,+38.3]$ & $+33.1\ [+26.5,\,+39.5]$ \\
LLM reading $-$ complete structured form & $-1.1\ [-4.4,\,+1.9]$ & $-0.3\ [-4.6,\,+3.7]$ & $+2.0\ [-2.6,\,+6.5]$ & $+4.6\ [-0.7,\,+9.6]$ & $+5.5\ [+1.0,\,+10.0]$ & $+2.4\ [-2.8,\,+7.6]$ \\
LLM reading $-$ no card information & $+8.4\ [+5.1,\,+12.2]$ & $+13.3\ [+10.0,\,+17.3]$ & $+20.2\ [+16.1,\,+24.6]$ & $+29.6\ [+24.9,\,+34.3]$ & $+38.8\ [+33.8,\,+43.9]$ & $+35.5\ [+29.4,\,+41.3]$ \\
Perfect extraction $-$ LLM reading & $+1.2\ [-1.9,\,+4.2]$ & $+1.1\ [-2.4,\,+4.8]$ & $+1.0\ [-2.2,\,+4.7]$ & $-0.5\ [-4.3,\,+3.8]$ & $+0.1\ [-4.1,\,+4.8]$ & $+1.1\ [-3.3,\,+5.9]$ \\
Registered offered key $-$ hidden key & $+28.8\ [+24.5,\,+33.5]$ & $+40.4\ [+36.0,\,+45.4]$ & $+57.0\ [+52.0,\,+62.1]$ & $+63.8\ [+58.9,\,+68.7]$ & $+56.1\ [+51.5,\,+60.7]$ & $+51.9\ [+47.5,\,+56.4]$ \\
Full-offer hindsight $-$ registered key (cross-policy headroom) & $+22.8\ [+18.3,\,+26.9]$ & $+30.3\ [+26.4,\,+34.1]$ & $+20.8\ [+17.6,\,+23.9]$ & $+4.1\ [+2.4,\,+6.1]$ & $+1.4\ [+0.4,\,+2.8]$ & $+1.3\ [+0.3,\,+2.7]$ \\
\bottomrule
\end{tabular}}
\end{table}

\paragraph{Registration resolution.}
An offered key may be collected through a structured registry or
normalized from a provider response by a language interface, but provider
confirmation, not model output, supplies its authority. Registration can
also be partial. \Cref{tab:registration-ladder} holds the LLM search policy, the frozen
provider-style cards and extracted attributes, selected-key verification, RFQ, and
reduced-cost admission fixed, and varies only which components of each
provider-confirmed offered key the directory reveals before contact. The
coarsest level confirms only that a provider holds a standing offer; later
levels add, cumulatively, the resource family, the service cell or directed
lane, the response day, and the quality class. Provider $p$ remains a
candidate for need $\kk$ only if it holds a standing offer whose key agrees
with $\kk$ on every revealed component. A coarse registration can therefore
match a need that the offer does not cover; as in the base protocol, that
contact fails verification and discards the provider without revealing its
offer. The exact-key level reproduces the registered-key run of
Figure~8 in every episode, and the first row is the
hidden-key base run. The registered run was independently reproduced over
all 200 episodes with identical paths and objectives. The ladder makes no
language-model call.

\begin{table}[!htbp]
\centering
\caption{Registration resolution under the same LLM search over all 200
episodes. Rows reveal the listed components of the provider-confirmed offered
key cumulatively before contact. Captured shares are percentages of full-open
value. The gain column is the paired difference from the preceding row at
$K=40$ in percentage points, computed from unrounded shares. Brackets are 95\%
seed-cluster percentile intervals. Failed verifications are mean counts per
episode among the first 40 contacts and over the complete search path.}
\label{tab:registration-ladder}
\scriptsize
\setlength{\tabcolsep}{3pt}
\resizebox{\textwidth}{!}{%
\begin{tabular}{@{}lrrrrrr@{}}
\toprule
& \multicolumn{3}{c}{Captured share (\%)} & & \multicolumn{2}{c}{Failed verifications} \\
\cmidrule(lr){2-4}\cmidrule(lr){6-7}
Registered offer information & $K=10$ & $K=40$ & Endpoint & Gain at $K=40$ & First 40 & Full path \\
\midrule
None (hidden key) & $14.1\ [10.9,\,18.1]$ & $32.0\ [27.6,\,36.6]$ & $46.8\ [42.2,\,51.3]$ & -- & 30.4 & 152.8 \\
Standing offer exists & $15.4\ [12.0,\,19.5]$ & $34.8\ [30.8,\,39.0]$ & $48.4\ [44.3,\,52.5]$ & $+2.9\ [+0.9,\,+4.8]$ & 29.2 & 118.3 \\
\quad$+$ resource family & $17.6\ [14.1,\,21.7]$ & $37.6\ [33.1,\,42.1]$ & $47.7\ [43.2,\,52.5]$ & $+2.7\ [-0.3,\,+5.4]$ & 27.6 & 114.9 \\
\quad$+$ service cell or lane & $31.9\ [26.4,\,37.6]$ & $64.5\ [60.2,\,69.2]$ & $79.0\ [74.8,\,83.2]$ & $+26.9\ [+20.7,\,+33.7]$ & 20.5 & 77.8 \\
\quad$+$ response day & $53.1\ [48.5,\,58.0]$ & $92.1\ [89.9,\,94.3]$ & $97.6\ [96.1,\,98.8]$ & $+27.7\ [+23.4,\,+31.7]$ & 5.2 & 29.6 \\
\quad$+$ quality class (exact key) & $54.5\ [49.8,\,59.5]$ & $95.8\ [93.8,\,97.5]$ & $98.6\ [97.2,\,99.6]$ & $+3.6\ [+2.2,\,+5.3]$ & 0.0 & 0.0 \\
\bottomrule
\end{tabular}}
\end{table}

\paragraph{Remaining shortfall under exact registration.}
The quality class adds the final 3.6 points (95\% interval 2.2 to 5.3) at
40 contacts. Among the first 40 contacts, failed verifications fall from
30.4 per episode with the key hidden to 27.6 with the family, 5.2 with cell
and day, and zero with the exact key. \emph{Full-offer hindsight} uses
nonimplementable information from the realized full-open solution. Beyond
the exact key, it adds only 4.1 points at 40 contacts and 1.3 at the
endpoint (\cref{tab:reading-options-full}); these differences are
cross-policy headroom, not implementable treatment effects. Most of the
remaining 1.4-point endpoint shortfall of exact registration is a reading
loss. Because registration does not override the card screen
(Section~4.2), a provider whose extracted attributes contradict
the need's resource family or location is excluded even when its
registered key matches. Adding the 37 such providers, found in 30 of the
200 episodes and mostly misread on location, to the final admitted sets
recovers 83\% of that shortfall.

\subsection{Net-value detail}
\label{app:netvalue-detail}

Using the fixed-cap train/holdout protocol in Section~5.6, each of the
nine condition--charge cells selects the smallest
training-mean-net-value-maximizing integer $K\in\{0,\ldots,250\}$ on seeds
1--20, and holdout intervals exclude uncertainty from cap selection and
cost calibration. Under the same protocol, the sensitivity grid varies per-card reading cost over 0, 0.0004, 0.02, 0.048,
and 0.10 dollars; splits total provider-acquisition costs of 0, 5, 12, or 25
dollars between engagement and RFQ; and varies per-provider key-registration
cost over 0, 0.02, 0.048, and 0.10 dollars, tracing complete net-value
curves over $K=0$--250.

\paragraph{Accounting details.}
Each observation is one seed--scenario episode ($H=1$), so fixed reading and
registration costs are not amortized across the five disruptions generated
by a seed. A path stopping before $K$ is forward-filled at its stopping
outcome, so realized engagement may be below $K$. Because preprocessing
precedes contact selection, a search system with $K=0$ may still incur
preprocessing cost, whereas no search incurs none and has net value zero.
For the hidden-key, per-attempt \$5 cell, the positive-episode share is 42\%,
indicating heterogeneity behind its positive mean. Holdout intervals use
100{,}000 percentile-bootstrap draws of the 20 holdout seed clusters,
retaining all five disruptions within each seed.

\paragraph{Cheaper failed contacts.}
A process approaching the RFQ-only bound of Section~5.6
would make key mismatches cheap to detect, for example a standardized
electronic capacity request that the provider's system checks against its
standing offer before any staff time is spent.

\Cref{tab:fixed-cap-net-value} reports the hidden-key per-attempt comparator
rows discussed in Section~5.6.

\begin{table}[!htbp]
\centering
\caption{Training-selected contact limits and holdout acquisition-stage net
value. Values are US dollars per episode; intervals resample the 20 holdout
seed clusters. Search conditions use normalized-gap direction, hidden offered
keys, and exact coordination.}
\label{tab:fixed-cap-net-value}
\footnotesize
\setlength{\tabcolsep}{5pt}
\renewcommand{\arraystretch}{1.08}
\begin{tabularx}{\textwidth}{@{}>{\raggedright\arraybackslash}Xrrr@{}}
\toprule
Search system & Selected $K$ & Holdout mean & 95\% interval \\
\midrule
\multicolumn{4}{@{}l}{\textit{Panel A: US\$5 per provider contact}} \\
No search & -- & $0.0$ & -- \\
LLM search over provider cards & 15 & $41.1$ & $[13.3,\,72.0]$ \\
Complete structured forms & 12 & $29.2$ & $[-1.8,\,64.0]$ \\
Manual review of 40 cards & 2 & $-0.02$ & $[-9.4,\,14.4]$ \\
\addlinespace[5pt]
\multicolumn{4}{@{}l}{\textit{Panel B: US\$12 per provider contact}} \\
No search & -- & $0.0$ & -- \\
LLM search over provider cards & 3 & $-0.7$ & $[-15.2,\,15.6]$ \\
Complete structured forms & 5 & $7.7$ & $[-16.7,\,34.4]$ \\
Manual review of 40 cards & 1 & $-5.6$ & $[-11.6,\,5.9]$ \\
\addlinespace[5pt]
\multicolumn{4}{@{}l}{\textit{Panel C: US\$25 per provider contact}} \\
No search & -- & $0.0$ & -- \\
LLM search over provider cards & 1 & $-15.1$ & $[-21.1,\,-7.5]$ \\
Complete structured forms & 0 & $-0.1$ & $[-0.1,\,-0.1]$ \\
Manual review of 40 cards & 0 & $-0.02$ & $[-0.02,\,-0.02]$ \\
\bottomrule
\end{tabularx}
\end{table}

\paragraph{Controls and decentralized implementation.}
The structured-form intervals at \$5 and \$12 include zero, while manual
review of a 40-card shortlist is near zero or negative. These controls do not
establish a uniform ordering between prose search and a complete structured
registry. With warm finite-tolerance exchange-ADMM driving contact and
admission decisions, the low-cost selected limit remains $K=15$. Exact
ex-post revaluation of the ADMM-selected admitted sets gives holdout means
(95\% intervals where reported) of $32.1\ [3.5,\,63.1]$,
$-6.6\ [-19.5,\,8.0]$, and $-0.1$ dollars per episode at the \$5, \$12, and
\$25 per-attempt costs. Here ``exact'' applies only to the ex-post centralized
re-solve of each admitted set; it does not certify the finite-iteration ADMM
decisions or prices (Section~5.2).

\paragraph{Registration sensitivity.}
A per-provider registration charge $c_{\mathrm{reg}}$ shifts each
registered-key net-value curve down by $250c_{\mathrm{reg}}$ without
changing its selected limit. Registered-key search therefore beats no
search on average as long as the charge stays below 2.06, 1.37, and 0.60
dollars per provider per episode at the 5-, 12-, and 25-dollar contact
charges, well above the tested 0.10 dollars.
\section{Language-Interface Protocol and Calibration}
\label{app:prompts}

The reader implementation follows the evidence contract in
Section~4.2. It receives the public schema, geography, calendar, and
card text but no bid or quotation. The parser normalizes schema codes and
quotation marks, accepts discontiguous verbatim fragments only when joined by
an ellipsis, downgrades unlocatable evidence to not stated, retries malformed
JSON once, and otherwise returns an empty record. Outputs are capped at
1{,}200 tokens and cached by model and prompt-content hash. The authors
checked the terms of use of both models, retain the full prompts and
outputs, which can be shared with the journal on request, and confirm that
the reported results were verified against these records.

Cards are rendered in three cells: an all-explicit anchor cell, which
supports the reader faithfulness screen, and the default and stress cells,
which form the provider-style cards. The writer receives the disclosed
facts and the assigned rendering cell,
but not the exact offer, the provider category, or any undisclosed fact.
No evaluated reader participates in writing or acceptance, and each
disclosure record is stored as the card's as-authored attributes before
any reader runs. Writer outputs are checked against the frozen disclosure
record. Checks reject
undisclosed attributes, out-of-window periods, unheld quality classes,
numerical values where an indirect rendering was requested, and commercial
cues inconsistent with the assigned posture. After five rejected drafts, a
flagged deterministic fallback is used. Fallback rates and mean attempts are
0.97\%/1.12 in the explicit anchor cell, 6.77\%/1.37 in the default cell,
and 13.61\%/1.70 in the stress cell. The 26 public descriptions (e.g.,
\citealp{progressivelogistics2026coldstorage,qpi2026copacking,manntrans2026services})
calibrate disclosure frequencies and language only; their URLs are listed
in \cref{tab:realtext-sources}. They provide neither verified
compatibility nor contractible quotations and are not an external reader
test. The page annotations were made by a single coder and were not
adjudicated. The pages anchor terminology and
disclosure frequencies while preserving experimental control over provider
truth, so the synthetic corpus provides controlled text variation rather
than a fitted field distribution.

\paragraph{Reader diagnostics.}
On 500 all-explicit anchor cards, the reader reproduces every structured
attribute with no malformed output, fabricated hard attribute, or
non-verbatim evidence. Across the default and stress provider-style cards,
accuracy is at least 85.6\% for type, location, period, and quality, while
indicative-quantity accuracy falls from 78.7\% to 59.7\%. Hard-attribute
fabrication is 0.3\% and 0.6\%, malformed JSON is zero, and non-verbatim
evidence occurs in 0.7\% and 0.9\% of hard-attribute readings. Claims
without a verbatim span are treated as unstated, and selected providers
still pass Assumption~4.1.

\paragraph{Repeatability.}
Across 1{,}750 cards in the repeatability subset, 8{,}955 of 10{,}500
fields (85.3\%) agree across the frozen output and three fresh
temperature-zero rereads. Those rereads were not propagated through the
economic mechanism, so the result motivates frozen indexing but does not
estimate downstream value variation.

\begin{table}[!htbp]
\centering
\small
\setlength{\tabcolsep}{10pt}
\caption{Disclosure patterns in the public calibration sample and
the final provider-style default corpus. Panel~A gives two percentages
per cell: stated among all records, then indirect among stated. Panel~B
gives commercial-posture shares of all records (public $n=26$; card
corpus $n=10{,}000$).}
\label{tab:realtext-anchor}
\begin{tabularx}{\textwidth}{@{}>{\raggedright\arraybackslash}Xrr@{}}
\toprule
& \makecell{Public\\descriptions} & \makecell{Provider-style\\cards} \\
\midrule
\multicolumn{3}{@{}l}{\textit{Panel A: attribute stated / indirect among stated (\%)}} \\
Families & 100.0 / 0.0 & 100.0 / 19.6 \\
Location & 92.3 / 29.2 & 100.0 / 16.0 \\
Quality class or certification & 84.6 / 9.1 & 85.0 / 19.1 \\
Availability & 50.0 / 46.2 & 80.4 / 20.2 \\
Capacity & 46.2 / 0.0 & 35.1 / 21.0 \\
\addlinespace[4pt]
\multicolumn{3}{@{}l}{\textit{Panel B: commercial posture (\% of all records)}} \\
Any statement & 80.8 & 74.7 \\
Competitive & 38.5 & 32.4 \\
Neutral & 42.3 & 32.2 \\
Premium & 0.0 & 10.2 \\
No statement & 19.2 & 25.3 \\
\bottomrule
\end{tabularx}
\end{table}

\Cref{tab:realtext-sources} identifies the 26 calibration pages; all were
retrieved on 13 September 2026.

\begin{table}[!htbp]
\centering
\scriptsize
\setlength{\tabcolsep}{4pt}
\caption{Public provider descriptions used for calibration. Only the URLs
are distributed; page content remains with its owners.}
\label{tab:realtext-sources}
\vspace{2pt}
\begin{tabularx}{\textwidth}{@{}l>{\raggedright\arraybackslash}p{0.33\textwidth}>{\raggedright\arraybackslash}X@{}}
\toprule
ID & Provider & URL \\
\midrule
R01 & PL Cold (Progressive Logistics) & \path{https://www.progressivelogistics.com/cold-storage-3pl-warehouse} \\
R02 & OLIMP Warehousing & \path{https://olimpwarehousing.com/service-locations/us-virginia/richmond/warehousing-services/} \\
R03 & KPAC Cold Storage & \path{https://www.kpaccoldstorage.com/} \\
R04 & QSI 3PL Plus (Quality Systems Integration) & \path{https://qsiglobal.net/} \\
R05 & Thayer 3PL & \path{https://www.thayer3pl.com/facility} \\
R06 & MD Logistics & \path{https://www.mdlogistics.com/life-sciences-pharmaceutical-warehouse-cold-chain-logistics-management/} \\
R07 & Cross Dock America & \path{https://crossdockamerica.com/michigan-cross-docking/} \\
R08 & Warp (partner cross-dock) & \path{https://www.wearewarp.com/crossdocks/chicago} \\
R09 & Forte Frozen & \path{https://www.fortefrozen.com/} \\
R10 & eFulfillment Service & \path{https://www.efulfillmentservice.com/inventory-storage/} \\
R11 & ASW (Warehouse Staffing Company / Lumper Services) & \path{https://www.warehousestaffinglumperservice.com/} \\
R12 & humano & \path{https://www.humano.net/} \\
R13 & Quality Packaging Inc. (QPI) & \path{https://www.qpack.com/contract-product-solutions/co-packing} \\
R14 & ActionPak & \path{https://www.actionpakinc.com/} \\
R15 & Mann Trans & \path{https://manntrans.com/} \\
R16 & Waterfront Logistics & \path{https://waterfrontlogistics.com/trucking} \\
R17 & Simon Express & \path{https://www.simonexpress.com/} \\
R18 & Xport Group & \path{https://thexportgroup.com/} \\
R19 & Federal Companies & \path{https://federalcos.com/logistics/food-grade-warehousing-and-storage} \\
R20 & iFrost Cold Storage & \path{https://www.ifrostcoldstorage.com/} \\
R21 & Palisades Logistics & \path{https://www.palisadeslogistics.com/locations-dry/des-moines-ia/} \\
R22 & Porter Logistics & \path{https://www.porter-logistics.com/industry-solutions/pharmaceutical} \\
R23 & Axis Warehouse \& Logistics & \path{https://axiswarehouse.com/midwest-dry-storage-warehouse/} \\
R24 & Econo-Pak & \path{https://www.econo-pak.com/about-us/our-co-packing-facility/} \\
R25 & Keller Warehousing and Co-Packing & \path{https://kellerlogistics.com/services/co-packing/} \\
R26 & Cold Chain 3PL & \path{https://coldchain3pl.com/industry-solutions/pharmaceutical-logistics/} \\
\bottomrule
\end{tabularx}
\end{table}

\end{document}